\documentclass[a4paper]{scrartcl}

\usepackage{fullpage}

\usepackage{mathtools}
\usepackage{amsfonts, amsthm, amsmath, amssymb}
\usepackage{nicefrac}
\usepackage{xcolor}
\usepackage[shortlabels]{enumitem}
\usepackage{algorithm}
\usepackage{algorithmicx}
\usepackage{algpseudocode}
\usepackage{booktabs}
\definecolor{cb2blue}{RGB}{55,126,184}
\usepackage[innerleftmargin = 5pt,
    innerrightmargin = 5pt,
    innertopmargin = 0pt,
    innerbottommargin = 5pt]{mdframed}

\newcommand{\emailLink}[1]{\href{mailto:#1}{#1}}
\newcommand{\orcidLink}[1]{\href{https://orcid.org/#1}{#1}}
\newcommand{\myemail}[1]{\textsc{email} \emailLink{#1}}
\newcommand{\myorcid}[1]{\textsc{orcid} \orcidLink{#1}}
\newcommand{\amsmscLink}[1]{\href{http://www.ams.org/mathscinet/msc/msc2020.html?t=#1}{#1}}

\newcommand{\labelSPG}{\texttt{SPG}}
\newcommand{\labelRtwoN}{\texttt{R2N}}
\newcommand{\labelPANOC}{\texttt{PANOCplus}}
\newcommand{\labelRPQN}{\texttt{RPQN}}
\newcommand{\labelRPQNlkm}{\texttt{RPQN-lkm}}
\newcommand{\labelRPQNlbfgs}{\texttt{RPQN-lbfgs}}
\newcommand{\labelRPQNlsrone}{\texttt{RPQN-lsr1}}
\newcommand{\labelnm}{\texttt{-nm}}

\RequirePackage{doi}

\usepackage{hyperref}
\hypersetup{colorlinks,allcolors=cb2blue}
\usepackage[capitalize,noabbrev,nameinlink]{cleveref} 

\theoremstyle{plain}
\newtheorem{theorem}{Theorem}[section]
\newtheorem{lemma}[theorem]{Lemma}
\newtheorem{proposition}[theorem]{Proposition}
\newtheorem{corollary}[theorem]{Corollary}
\theoremstyle{definition}
\newtheorem{assumption}{Assumption}
\newtheorem{definition}[theorem]{Definition}

\crefname{assumption}{Assumption}{Assumptions}

\newcommand{\R}{\mathbb{R}}
\newcommand{\Rinf}{\overline{\R}}
\newcommand{\N}{\mathbb{N}}
\newcommand{\Smat}{\mathbb{S}}
\newcommand{\solutionset}{\Omega}

\newcommand{\ared}{\mathrm{ared}}
\newcommand{\pred}{\mathrm{pred}}
\newcommand{\nmfactor}{\eta_{\textrm{nm}}}

\DeclareMathOperator*{\minimize}{minimize}
\DeclareMathOperator{\dist}{dist}

\DeclareMathOperator{\prox}{prox}
\DeclareMathOperator*{\argmin}{argmin}
\DeclareMathOperator{\dom}{dom}

\DeclareMathOperator{\sign}{sign}
\DeclareMathOperator{\envelope}{env}
\newcommand{\ball}{\mathbb{B}}
\newcommand{\innerprod}[2]{\langle #1 , #2 \rangle}
\newcommand{\mumin}{\mu_{\min}}
\newcommand{\mumax}{\mu_{\max}}
\newcommand{\mubar}{\bar{\mu}}

\newcommand{\TheTitle}{Proximal Limited-Memory Quasi-Newton Methods for Nonsmooth Nonconvex Optimization}
\title{\bfseries\TheTitle}
\author{Simeon vom Dahl\thanks{%
    University of Würzburg, Institute of Mathematics, Emil-Fischer-Str.\ 30, 97074 Würzburg, Germany.\\
    \myemail{simeon.vomdahl@uni-wuerzburg.de}}\and%
    Alberto De~Marchi\thanks{%
    University of the Bundeswehr Munich, Department of Aerospace Engineering, Institute of Applied Mathematics and Scientific Computing, Werner-Heisenberg-Weg 39, 85577 Neubiberg, Germany.\\
    \myemail{alberto.demarchi@unibw.de}, \myorcid{0000-0002-3545-6898}}\and%
    Christian Kanzow\thanks{%
    University of Würzburg, Institute of Mathematics, Emil-Fischer-Str.\ 30, 97074 Würzburg, Germany.\\
    \myemail{christian.kanzow@uni-wuerzburg.de}, \myorcid{0000-0003-2897-2509}}%
    }
\date{}

\begin{document}
\maketitle

\begin{abstract}
    We introduce a proximal limited--memory quasi--Newton scheme for minimizing the sum of a continuously differentiable function and a proper, lower semicontinuous and prox-bounded, possibly nonsmooth, function.
    Both functions might be nonconvex.
    The method builds upon the computation of scaled proximal operators and is globalized by adaptively updating a regularization parameter based on a criterion of sufficient decrease. 
    We prove global convergence under mild assumptions and then establish convergence of the entire sequence (with rates) under the Kurdyka--\L ojasiewicz property.
    To efficiently solve the subproblems, we exploit the compact representation of limited-memory quasi-Newton updates.
    We derive also a compact representation of the limited--memory Kleinmichel formula, a rank-one quasi-Newton scheme that preserves positive definiteness under the same condition as the BFGS update.
    Numerical results show a significant speed up compared to other methods.

    \medskip

    {\small
        \noindent
        \textbf{Keywords.}
        Nonsmooth and nonconvex optimization;
        proximal quasi--Newton method;
        global convergence;
        limited memory methods;
        compact representation;
        Kurdyka--\L ojasiewicz inequality;
        Kleinmichel formula.
    }
    
    {\small
        \noindent
        \textbf{AMS Subject Classifications.}
        \amsmscLink{65K10}, 
        \amsmscLink{90C06}, 
        \amsmscLink{90C26}, 
        \amsmscLink{90C53}. 
    }
\end{abstract}

\section{Introduction}

We consider the structured optimization problem
\begin{equation}\tag{P}\label{problem}
    \minimize_{x \in \R^n}~ F(x) \coloneqq f(x) + \varphi(x),
\end{equation}  
where $f \colon \R^n \to \R $ and $\varphi \colon \R^n \to \overline{\R}\coloneqq \R \cup \{\infty\}$ are given functions that fulfill the following blanket assumptions.
\begin{assumption}\label{assumption_general}
	The conditions below hold for \eqref{problem}:
    \begin{enumerate}[label=(\alph*)]
        \item The function $f$ is continuously differentiable on an open set containing $\dom \varphi$.  \label{ass_psi}
        \item The function $\varphi$ is proper, lower semicontinuous, and prox-bounded (\cref{def:prox_boundedness}). \label{ass:prox_bounded}
        \item The objective function $F\coloneqq f+\varphi$ is bounded from below, i.e., $\inf F > -\infty$. \label{ass_inf_F}
    \end{enumerate}
\end{assumption}
Under these conditions, $F \colon \R^n \to \overline{\R}$ is proper and lower semicontinuous, though it may be nonsmooth and nonconvex.

Structured optimization problems of the form \eqref{problem} arise across a wide range of applications in applied mathematics, particularly in high-dimensional settings where regularization is often indispensable.
In this setup, $f$ is called the \emph{loss function} and $\varphi$ the \emph{regularizer}.
Over the past few decades, researchers have proposed a wide range of regularizers and paired them with loss functions from many application areas.
The most classical regularizer is standard $\ell_1$-regularization.
It is often applied to linear regression---yielding the classical LASSO problem \cite{tibshirani_1996}---or to logistic regression, both of which are ubiquitous in machine learning and statistics.
At the same time, interest in nonconvex regularizers has grown significantly in recent years; prominent examples include capped $\ell_1$-penalties, SCAD regularization, and MCP regularization.
Imaging applications such as image denoising \cite{rudin_1992} or deblurring form another major domain of interest \cite{bian_chen_2015}.
In addition to these practical applications, the framework \eqref{problem} includes the general problem of minimizing a continuously differentiable function over a closed, possibly nonconvex set.

\subsection{Related work}\label{subsec:relwork}

Classical proximal methods for \eqref{problem} are usually analyzed under convexity of the regularizer $\varphi$ and global Lipschitz continuity of $\nabla f$.
Many practically relevant models do not satisfy at least one of these assumptions, and a clear recent trend has therefore been to weaken them as much as possible.
At the first-order level, this relaxation happened in two stages.
First, convexity of the regularizer was abandoned: Li and Lin~\cite{li_lin_2015} proved convergence of accelerated proximal-gradient schemes for nonconvex $\varphi$ under a globally Lipschitz gradient, and Themelis, Stella, \& Patrinos~\cite{themelis_stella_patrinos_2018} derived the first forward-backward-type method with superlinear rates in the nonconvex setting.
Second, the global Lipschitz requirement on $\nabla f$ was removed in a sequence of proximal-gradient works~\cite{kanzow_mehlitz_2022,jia_kanzow_mehlitz_2023,demarchi2022proximal,demarchi_2023,kanzow_lehmann_2025}, in both monotone and nonmonotone variants.

These developments strongly motivate analogous weak assumptions for second-order methods.
Incorporating second‑order information into proximal algorithms leads to Newton‑type methods whose subproblems take the form
\begin{equation}\label{eq:PNk}
    \minimize_{x}
    \left\{
    f(x^k)+\nabla f(x^k)^\top (x-x^k) + \frac12 (x-x^k)^\top B_{k}(x-x^k)+\varphi(x)
    \right\},
\end{equation}
where \(B_{k}\) is either the exact Hessian \(\nabla^{2}f(x^{k})\) or a (dense or limited‑memory) quasi-Newton approximation.

In the fully convex setting, Lee, Sun \& Saunders~\cite{lee_yuekai_saunders_2014} provided a thorough analysis of exact and inexact proximal Newton and proximal quasi-Newton algorithms.
Their theory delivers global convergence together with fast local rates, showing that these methods preserve the key results familiar from smooth optimization.
For large-scale applications, limited-memory quasi-Newton methods are usually the preferred choice, since storing full Hessians or dense quasi-Newton matrices is typically infeasible.
Becker \& Fadili \cite{becker_fadili_2012} proposed a limited-memory proximal quasi-Newton framework based on a zero-memory SR1 update, and Scheinberg \& Tang \cite{scheinberg_tang_2016} gave the first global-rate result for a practical L-BFGS-based scheme.

Growing attention has been recently devoted to substantially weaker global assumptions, similar to the developments for first-order methods.
Aravkin, Baraldi \& Orban \cite{aravkin_baraldi_orban_2022} were the first to allow nonconvex regularizers in a proximal quasi-Newton method, proving global convergence for a trust-region scheme under mere prox-boundedness of $\varphi$.
Most recently, Diouane, Habiboullah, \& Orban \cite{diouane2026proximal} established global convergence results for a proximal modified quasi-Newton method without requiring convexity of $\varphi$, global Lipschitz continuity of $\nabla f$, or uniform boundedness of the model Hessians.
For smooth unconstrained optimization, Ueda \& Yamashita \cite{ueda_yamashita_2014} replaced classical line searches by a single regularization parameter updated through a success-ratio test.
Kanzow \& Lechner \cite{kanzow_lechner_2023} transferred this idea to nonsmooth problems and showed how the compact limited-memory representation of Byrd, Nocedal, \& Schnabel \cite{byrd_nocedal_schnabel_1994} can be combined with the fast computation of quasi-Newton proximity operators from \cite{becker_fadili_ochs_2019}.

The symmetric rank-one (SR1) update offers inexpensive Sherman--Morrison formulas for both the matrix and its inverse, but it can lose positive definiteness.
Kleinmichel \cite{kleinmichel_1981_1,kleinmichel_1981_2} proposed a simple modification that preserves positive definiteness under the same condition as the BFGS formula while retaining rank-one updates and the cheap inverse formulas.
Despite these advantages, the update has seen little use.
Spellucci \cite{spellucci_2001} employed it in numerical tests against his own modified rank-one update.
However, to the best of our knowledge, no positive-definite rank-one update has been applied in proximal quasi-Newton methods before.

\subsection{Our contributions}\label{subsec:contrib}

We propose a limited-memory regularized proximal quasi-Newton method (RPQN) for problem \eqref{problem} under \cref{assumption_general}. 
Our method builds on the algorithm introduced in \cite{becker_fadili_2012}, and, more specifically, on \cite{kanzow_lechner_2023, lechner_2022}.
However, their analysis assumes that $\varphi$ is convex and real-valued, and that $\nabla f$ is globally Lipschitz continuous.
Our work removes both limitations by allowing extended-valued nonconvex functions~$\varphi$ and requiring only local Lipschitz assumptions for convergence.
Thus, our assumptions are comparable to those in \cite{diouane2026proximal} and, under additional local Lipschitz continuity, we prove stationarity of accumulation points.

We present the first proximal-type method that systematically employs Kleinmichel’s positive-definite rank-one update~\cite{kleinmichel_1981_1,kleinmichel_1981_2}.  
Although it retains the cheap Sherman--Morrison inverse of SR1 while guaranteeing positive definiteness, the formula has been largely overlooked. 
Extending the compact limited-memory framework of \cite{kanzow_lechner_2023, lechner_2022}, we formulate a limited-memory Kleinmichel update, prove its compact representation, and embed it in the semismooth Newton subproblem solver of \cite{kanzow_lechner_2023}.  
The resulting inner systems stay small, so the solver plugs into our RPQN with negligible overhead. 

\subsection{Organization of the paper}

We begin with some preliminary concepts and results in \cref{Sec:Prelims}.
A detailed description of our algorithm together with a general convergence theory under minimal assumptions is given in \cref{Sec:Algorithm}.
The convergence of the entire iteration sequence together with a rate-of-convergence analysis, in particular under the Kurdyka-\L ojasiewicz inequality, is shown in \cref{Sec:Convergence}.
We discuss the quasi-Newton update by Kleinmichel in \cref{sec:kleinmichel}, which preserves positive definiteness under the same conditions as the more famous BFGS method, but has the advantage of being only a rank-one update.
An efficient implementation of the overall algorithm is based on a compact representation of the limited-memory quasi-Newton formula by Kleinmichel, as detailed in \cref{sec:algo_refinements} along with other refinements.
Some numerical results including a comparison of different limited-memory quasi-Newton updates are presented in \cref{Sec:Numerics}, before concluding with some final remarks.

\subsection{Notation}

Throughout, $\N = \{1,2, \dots \}$ denotes the set of positive integers and $\N_0 = \N\cup\{0\}$.
$\R$ is the real numbers and we write $\overline{\R}\coloneqq \R\cup\{\infty\}$ for the extended reals.
For $a,b\in\R$ we use $(a,b)$, $[a,b]$, $[a,b)$ and $(a,b]$ for the usual open, closed and semi-closed intervals. 

The sets
\[
\Smat^{n} \coloneqq 
\{A\in\R^{n\times n}\mid A^\top =A\}
\quad\text{and}\quad
\Smat_{++}^{n} \coloneqq
\{A\in \Smat^{n}\mid A\succ 0\}
\]
collect symmetric and symmetric positive definite matrices, respectively. The identity matrix (with dimensions clear from context) is $I$.

Vectors $x, y\in\R^n$ are equipped with the Euclidean norm $\|x\|$ and inner product $\innerprod{x}{y} \coloneqq x^\top y$.
For a matrix $H \in \Smat^n$, we let $\Vert x \Vert_H^2 \coloneqq \innerprod{x}{H x}$; which is the squared norm induced by $H$ whenever $H\in\Smat_{++}^{n}$.
For a closed set $C\subseteq\R^n$ the distance from $x$ to $C$ is $\dist(x,C)\coloneqq \inf_{y\in C}\|x-y\|$, and  
$\ball_\varepsilon(x)\coloneqq \{y\in\R^n \mid\|y-x\|\le \varepsilon\}$ denotes the closed ball of radius $\varepsilon$ around~$x$. 
Given an extended-valued function $f\colon\R^n \to\Rinf$, its domain is $\dom f\coloneqq \{x\in\R^{n}\mid f(x)<\infty\}$ and $f$ is called proper if $\dom f \neq \emptyset$.
Sequences are written $\{x^{k}\}_{k\in\mathcal I}$ for an index set $\mathcal I\subseteq\N_{0}$; we abbreviate this to $\{x^{k}\}_{\mathcal I}$, and simply $\{x^{k}\}$ when $\mathcal I=\N_{0}$.

\section{Preliminaries}\label{Sec:Prelims}
We collect here the background material needed for the sections that follow.

\subsection{Limiting subdifferential}

For an extended-valued function
\(F\colon\R^n \to \Rinf\coloneqq \R\cup \{\infty\}\) we denote by
\(\partial F(x)\) its \emph{limiting subdifferential} at~\(x\); full
definitions can be found in \cite{rock_wets_1998,Mor-18}.
If $F$ is continuously differentiable at $x$, then \(\partial F(x)=\{\nabla F(x)\}\).
If $F$ is convex, then \(\partial F(x)\) coincides with the classical convex subdifferential.
Otherwise, we only recall the facts that we explicitly use, all taken from \cite[Chapter 8]{rock_wets_1998}:
\begin{itemize}
    \item For the structured objective \(F=f+\varphi\) and every
    \(x\in\dom\varphi\),
    $\partial F(x)=\nabla f(x)+\partial\varphi(x)$.
    \item If $ \bar{x} $ is a local minimum of $ F $, then 
    $ 0 \in \partial F(\bar{x}) $.
    \item The mapping $\partial F$ is \emph{outer semicontinuous} with respect to $F$-attentive convergence, i.e., if $x^k\to x \in \dom F$, $F(x^k)\to F(x)$, and $v^k\in\partial F(x^k)$ with $v^k\to v$, then $v\in\partial F(x)$.
\end{itemize}
The third property is often also referred to as \emph{robustness} of the limiting subdifferential.
Motivated by the second property, a point \(x\) is called \emph{stationary} for \eqref{problem} if \(0\in\partial F(x)\); the set of all such points is denoted by $\solutionset$.
\cref{assumption_general}\ref{ass_psi}, together with the outer semicontinuity of \(\partial\varphi\), guarantees that $\solutionset$ is closed, although it may be empty in the present setting.

\subsection{Proximity Operator}
Here we briefly recall the proximity operator and Moreau envelope, along with the notion of prox-boundedness, which is central to our analysis later. All results in this section are taken from \cite[Section 1.G]{rock_wets_1998}, where the reader is referred for a full treatment.

\begin{definition}
Let $\varphi \colon \R^n \to \Rinf$ be proper and lower semicontinuous, and let $\gamma > 0$.
The \emph{Moreau envelope} and \emph{proximity operator} of $\varphi$ with parameter $\gamma$ are
\begin{align*}
\envelope_\varphi^\gamma(x) \coloneqq{}& \inf_{y \in \R^n} \left\{ \varphi(y) + \frac{1}{2\gamma} \Vert y - x \Vert^2 \right\}, \\
\prox_\varphi^\gamma(x) \coloneqq{}& \argmin_{y\in\R^n} \left\{ \varphi(y) + \frac{1}{2\gamma} \Vert y-x \Vert^2 \right\}.
\end{align*}
For $\gamma = 1$, we write $\envelope_\varphi \coloneqq \envelope_\varphi^1$ and $\prox_\varphi \coloneqq \prox_\varphi^1$.
\end{definition}
In general, $\prox_\varphi^\gamma$ is set-valued, whereas for convex $\varphi$ it is single-valued.
If $\varphi$ is an indicator function $\delta_C$, then $\prox_\varphi^\gamma$ coincides with the projection onto $C$.
The definitions extend naturally to a positive definite matrix $H \in \Smat_{++}^n$ by replacing $\frac{1}{2\gamma}\|\cdot\|^2$ with $\frac12\|\cdot\|_H^2$, yielding $\envelope_\varphi^H$ and $\prox_\varphi^H$, reducing to the scalar definition with $H = \gamma^{-1}I$.
To formulate existence properties of proximal points in this general setting, we next recall prox-boundedness.

\begin{definition}\label{def:prox_boundedness}
A function $\varphi \colon \R^n \to \Rinf$ is called \emph{prox-bounded} if there exists $\gamma > 0$
such that $\envelope_\varphi^\gamma(x) > -\infty$ for some $x \in \R^n$.
The supremum of all such $\gamma$ is called the \emph{threshold of prox-boundedness} of $\varphi$ and is denoted by $\gamma_\varphi$.
\end{definition}
The characterizations given in \cite[Exercise 1.24]{rock_wets_1998} show that prox-boundedness is clearly a mild condition satisfied by a broad class of functions. In particular, every proper, lower semicontinuous function that is bounded from below is also prox-bounded (with $\gamma_\varphi = +\infty$).

The following proposition (part of \cite[Theorem 1.25]{rock_wets_1998}) summarizes key regularity properties of the Moreau envelope and proximal mapping under prox-boundedness.

\begin{proposition}\label{prop:prox_boundedness_properties}
Let $\varphi \colon \R^n \to \Rinf$ be proper, lower semicontinuous, and prox-bounded. Then for every $\gamma \in (0,\gamma_\varphi)$, the set $\prox_\varphi^\gamma(x)$ is nonempty and compact for all $x\in\R^n$, whereas the value $\envelope_\varphi^\gamma(x)$ is finite and depends continuously on $(\gamma,x)$.
\end{proposition}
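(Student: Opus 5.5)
The plan is to follow the classical route of Rockafellar--Wets, Theorem 1.25, working directly with the function
\[
h(y;\gamma,x) \coloneqq \varphi(y) + \frac{1}{2\gamma}\Vert y-x\Vert^2 .
\]
The first step converts prox-boundedness into a quadratic lower bound on $\varphi$. By \cref{def:prox_boundedness} there are $\hat\gamma>0$ and $\hat x$ with $\envelope_\varphi^{\hat\gamma}(\hat x)=:\beta>-\infty$. This gives
\[
\varphi(y)\ge \beta-\frac{1}{2\hat\gamma}\Vert y-\hat x\Vert^2 \quad\text{for all } y .
\]
Since $\gamma_\varphi$ is a supremum, for any $\gamma<\gamma_\varphi$ we may choose $\hat\gamma\in(\gamma,\gamma_\varphi]$ admissible, so that $1/\gamma>1/\hat\gamma$.

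The second step establishes a uniform level-boundedness estimate. Fix $\gamma\in(0,\gamma_\varphi)$ and $x$, and take a neighborhood $[\gamma_1,\gamma_2]\times \ball_\rho(x)$ of $(\gamma,x)$ with $\gamma_2<\hat\gamma$. Using the lower bound on $\varphi$, for $(\gamma',x')$ in this neighborhood we get
\[
h(y;\gamma',x')\ge \beta+\frac{1}{2\gamma_2}\Vert y-x'\Vert^2-\frac{1}{2\hat\gamma}\Vert y-\hat x\Vert^2 .
\]
The right-hand side is a quadratic in $y$ with positive leading coefficient $\frac12(1/\gamma_2-1/\hat\gamma)$, and the linear terms are bounded uniformly in $x'\in\ball_\rho(x)$. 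Hence $h$ is level-bounded in $y$, locally uniformly in $(\gamma',x')$. I expect this uniformity to be the only genuinely delicate point; the rest is routine.

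The third step gives existence and compactness. Because $\varphi$ is proper, $h(\cdot;\gamma,x)$ is proper and lower semicontinuous, and by the previous step it is level-bounded. Weierstrass then shows the infimum is attained and finite, so $\envelope_\varphi^\gamma(x)\in\R$. The argmin set is closed, as a level set of an lsc function, and bounded, so $\prox_\varphi^\gamma(x)$ is nonempty and compact.

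The fourth step proves continuity of the envelope in $(\gamma,x)$.
\begin{itemize}
\item \emph{Upper semicontinuity.} The envelope is an infimum of the functions $(\gamma,x)\mapsto h(y;\gamma,x)$, each continuous on $(0,\infty)\times\R^n$ for fixed $y\in\dom\varphi$. An infimum of continuous functions is usc.
\item \emph{Lower semicontinuity.} Take $(\gamma^k,x^k)\to(\gamma,x)$ and minimizers $y^k\in\prox_\varphi^{\gamma^k}(x^k)$. By the uniform level-boundedness, with levels bounded via the usc bound, the $y^k$ stay bounded. Pass to a subsequence realizing $\liminf\envelope_\varphi^{\gamma^k}(x^k)$ along which $y^k\to\bar y$. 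Lower semicontinuity of $\varphi$ and continuity of the quadratic term then give
\[
\envelope_\varphi^\gamma(x)\le h(\bar y;\gamma,x)\le\liminf h(y^k;\gamma^k,x^k).
\]
\end{itemize}
Combining the two inequalities yields continuity.
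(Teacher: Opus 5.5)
Your proof is correct and is essentially the standard argument behind \cite[Theorem 1.25]{rock_wets_1998}, which the paper cites rather than proving the result itself. That argument also combines a quadratic minorant from prox-boundedness, level-boundedness of $y\mapsto\varphi(y)+\frac{1}{2\gamma}\Vert y-x\Vert^2$ locally uniformly in $(\gamma,x)$, and parametric minimization to obtain nonempty compact proximal sets and continuity of the envelope.
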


Note that \cref{prop:prox_boundedness_properties} sharpens \cref{def:prox_boundedness} in an important way: for every $\gamma\in(0,\gamma_\varphi)$, the envelope is finite \emph{for all} $x\in\R^n$.
Hence, the qualifier \emph{some} in \cref{def:prox_boundedness} can effectively be read as \emph{all}.

\subsection{Kurdyka--\L ojasiewicz property}
We now state the definition of the celebrated Kurdyka--\L ojasiewicz (KL) property, which will play a central role in our subsequent convergence analysis.
Since the nonsmooth version was introduced in \cite{bolte_2007}, it has been invoked countless times as a key assumption in the local convergence theory of algorithms for problem \eqref{problem}.
We refer the reader also to the recent textbook \cite[Chapter 8]{jin_2025} for a comprehensive discussion and motivation of this KL property.

\begin{definition}[Kurdyka--\L ojasiewicz]\label[definition]{def_KL}
    Let $g \colon \R^n \to \overline{\R}$ be proper and lower semicontinuous.
    We say that $g$ has the \emph{KL property} at $x^* \in \{ x \in \R^n \,\vert\, \partial g (x) \neq \emptyset \}$ if there exist a constant $\eta > 0$, a neighborhood $U \subset \R^n$ of $x^*$, and a continuous concave function $\chi \colon [0,\eta] \to [0,\infty)$ which is continuously differentiable on $(0,\eta)$ and satisfies $\chi (0) = 0$ as well as $\chi'(t) > 0$ for all $t \in (0,\eta)$ such that the so-called \emph{KL inequality}
    \[
    \chi'\left(g(x)-g(x^*)\right) \dist\left( 0, \partial g(x) \right) 
    \geq 1
    \]
    holds for all $x \in U \cap \left\{x \in \R^n \, \middle\vert \, g(x^*) < g(x) < g(x^*) + \eta \right\}$.
    The function $\chi$ from above is referred to as the \emph{desingularization function}.
    In case it can be chosen to be the function $t \mapsto ct^{1-\theta}$ with $\theta \in [0,1)$ for some $c>0$, then $g$ is said to have the KL property of exponent $\theta$ at $x^*$.
    If $g$ has the KL property (of exponent $\theta$) everywhere, it is called a KL function (of exponent $\theta$).
\end{definition}

By \cite[Lemma 2.1]{attouch_2010}, every proper, lower semicontinuous function $g \colon \R^n \to \overline \R$ already enjoys the KL property at all non-critical points $x$, i.e., those with $0 \notin \partial g(x)$.
Subsequent work has identified functions that satisfy the property everywhere.
In particular, Bolte et al. \cite{bolte_2007} showed that all \emph{tame} functions---those definable on o-minimal structures---are KL.
This covers real polynomials, $p$-norms, exponentials, logarithms, and many others.
Because tame functions are closed under finite sums and compositions, this yields a large family of KL functions \cite{attouch_2010}.

To simplify the KL-based convergence-rate analysis later on, we will make use of the following technical lemma from \cite[Lemma 1]{artacho_fleming_vuong_2018}.

\begin{lemma}\label{lemma_technical_rates}
    Let $\{a_j\}_{j\in\N}$ be a non-negative, monotonically decreasing
    sequence converging to $0$, and suppose that for all
    sufficiently large $j$,
    \begin{equation}\label{eq:lemma-recursion}
        a_j^{\alpha} \leq \beta\, (a_j - a_{j+1}),
    \end{equation}
    for some constants $\alpha,\beta>0$. Then the following statements hold:
    \begin{enumerate}[label=\textup{(\alph*)}]
        \item If $\alpha\in(0,1]$, then $\{a_j\}$ converges linearly to $0$ with rate $1 - \tfrac{1}{\beta}$.
        \item If $\alpha>1$, then there exists a constant $C>0$ such that
        \[
        a_j \leq C j^{-\frac{1}{\alpha-1}}
        \qquad\text{for all $j$ sufficiently large.}
        \]
    \end{enumerate}
\end{lemma}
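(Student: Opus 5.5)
For part (a), I will use that the recursion \eqref{eq:lemma-recursion} holds for all $j \ge j_0$. Because $a_j \to 0$, I may enlarge $j_0$ so that $a_j \le 1$ for $j \ge j_0$. When $\alpha \in (0,1]$ and $0 \le a_j \le 1$ we have $a_j^{\alpha} \ge a_j$, so
\[
a_j \le \beta (a_j - a_{j+1}), \qquad\text{equivalently}\qquad a_{j+1} \le \Bigl(1-\tfrac{1}{\beta}\Bigr) a_j .
\]
Two cases need a remark. If some $a_j=0$, monotonicity forces the sequence to vanish from then on, and the claim is trivial. Otherwise the displayed inequality with $a_{j+1}\ge 0$ gives $\beta \ge 1$, so $1-\tfrac1\beta\in[0,1)$. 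Iterating then yields $a_j \le (1-\tfrac1\beta)^{j-j_0} a_{j_0}$, which is linear convergence with the stated rate.

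For part (b), I will follow the standard Attouch--Bolte argument and compare with the function $h(t)=t^{-\alpha}$. Again assume $a_j>0$ for all $j$, since otherwise there is nothing to prove. Set $\rho\coloneqq 1-\alpha<0$, so the goal is a uniform lower bound on $a_{j+1}^{\rho}-a_j^{\rho}$. Since $h$ is decreasing and $a_{j+1}\le a_j$,
\[
a_j - a_{j+1} = \int_{a_{j+1}}^{a_j} dt \le \int_{a_{j+1}}^{a_j} \frac{a_j^\alpha}{t^\alpha}\,dt ,
\]
and together with \eqref{eq:lemma-recursion} this gives
\[
1 \le \beta\, a_j^{-\alpha}(a_j-a_{j+1}) \le \beta\int_{a_{j+1}}^{a_j} t^{-\alpha}dt = \frac{\beta}{\alpha-1}\bigl(a_{j+1}^{\rho}-a_j^{\rho}\bigr).
\]
This step needs no case split. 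Only the crude bound $t^{-\alpha}\ge a_j^{-\alpha}$ on $[a_{j+1},a_j]$ is used, which avoids comparing $a_{j+1}$ with $a_j$ (the main obstacle in the Attouch--Bolte version). So $a_{j+1}^{\rho}-a_j^{\rho}\ge \mu\coloneqq(\alpha-1)/\beta>0$.

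Summing from $j_0$ to $j-1$ gives $a_j^{\rho}\ge a_{j_0}^{\rho}+\mu (j-j_0)\ge \mu(j-j_0)$. Since $\rho<0$, this means $a_j\le [\mu(j-j_0)]^{-1/(\alpha-1)}$. For $j\ge 2j_0$ we have $j-j_0\ge j/2$, so $a_j\le C j^{-1/(\alpha-1)}$ with $C=(\mu/2)^{-1/(\alpha-1)}$. The only delicate points in the whole proof are the reduction to $a_j\le 1$ in part (a) and the bookkeeping of signs of exponents in part (b), both of which are routine.
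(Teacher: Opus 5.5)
Your proof is correct. The paper gives no proof of its own but imports the result from \cite[Lemma~1]{artacho_fleming_vuong_2018}, and your argument is essentially the standard one behind that result: $a_j^{\alpha}\ge a_j$ once $a_j\le 1$ for (a), and for (b) the bound $a_j^{-\alpha}(a_j-a_{j+1})\le\int_{a_{j+1}}^{a_j}t^{-\alpha}\,dt$ followed by telescoping $a_j^{1-\alpha}$.

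As you note, the case-free step in (b) works because $a_j^{\alpha}$, not $a_{j+1}^{\alpha}$, sits on the left of \eqref{eq:lemma-recursion}. The inequality the paper actually derives in the proof of \cref{thm:kl-successful-recursion}, namely $a_{j+1}^{2\theta}\le\tau^{2\theta}(a_j-a_{j+1})$, is of the latter (Attouch--Bolte) form. That application therefore needs the usual case split, and in the linear case the factor $\beta/(1+\beta)$ instead of $1-1/\beta$.
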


\section{Methodology and global convergence}\label{Sec:Algorithm}

This section introduces our regularized proximal quasi-Newton method and presents convergence results under a hierarchy of assumptions.

\subsection{Algorithm}

Consider a fixed iteration $k\in\N_0$, and let $x^k\in\R^n$ denote the current iterate.
Proximal Newton--type methods compute the next iterate by solving the subproblem
\[
    \minimize_{x\in\R^n}\quad
    q_k(x) \coloneqq f(x^{k})+\nabla f(x^{k})^\top (x-x^k) +\frac{1}{2} (x-x^k)^\top B_{k}(x-x^k)+\varphi(x),
\]
where the symmetric matrix \(B_{k}\) captures second-order information of \(f\).
A \emph{proximal Newton} method takes \(B_{k}=\nabla^{2}f(x^{k})\), whereas a \emph{proximal quasi-Newton} method uses an approximation \(B_{k}\approx\nabla^{2}f(x^{k})\).
Because, in general, the Hessian surrogate $B_k$ may be indefinite, $q_k$ is not necessarily convex.  
We apply a positive spectral shift
\begin{equation}\label{eq:Gk}
    G_k\;\coloneqq \;B_k+\mu_k I,\qquad \mu_k>0,
\end{equation}
and instead solve the regularized subproblem
\begin{align}\label{eq_subproblem}
    \minimize_{x\in\R^n}\quad
    \hat{q}_k(x)
    \coloneqq
    f(x^k)+\nabla f(x^k)^\top (x-x^k) + \frac12 (x-x^k)^\top G_k(x-x^k)+\varphi(x).
\end{align}
At every iteration of \cref{alg:RPQN} we search for a minimizer $\hat x^k$ of \eqref{eq_subproblem}.
If successful, we write
\begin{equation}\label{eq:d_definition}
	d^k \coloneqq \hat x^k - x^k.
\end{equation}
Our globalization strategy adaptively updates the regularization parameter $\mu_k$.
This approach was introduced in \cite{ueda_yamashita_2014} and further used in \cite{kanzow2023regularization} for smooth unconstrained problems, and more recently extended to nonsmooth problems in \cite{lechner_2022,vomdahl_kanzow_2024,aravkin_baraldi_orban_2022,diouane2026proximal}.
To evaluate the candidate iterate $\hat x^{k}$, we compare the actual decrease in the objective with the decrease predicted by the quadratic model, namely
\begin{equation}\label{eq:ared-pred-k}
    \ared_k \coloneqq F(x^{k}) - F(\hat x^{k}), 
    \qquad
    \pred_k \coloneqq F(x^{k}) - q_k(\hat x^{k}).
\end{equation}

\begin{algorithm}[tbh]
    \caption{Regularized Proximal Quasi-Newton Method} 
    \label{alg:RPQN}
    \begin{algorithmic}[1]
    	\State Choose parameters $c_1 \in (0,1)$; $\sigma_2>1$; $0 < \mumin \leq \mumax < \infty$.
        \State Choose $x^0 \in \dom\varphi$, $B_0 \in \R^{n \times n}$, $\mu_0 \in [\mumin, \mumax]$.
        \For{$k=0,1,2,\ldots$}
        \State Set $G_k = B_k + \mu_k I$.
        \State Search for a solution $\hat x^k$ of the regularized subproblem \eqref{eq_subproblem}. \label{algo_line_subproblem}
        \If{$\hat x^k$ is found}
            \State Compute $\ared_k$ and $\pred_k$ as in \eqref{eq:ared-pred-k}.\label{algo_line_ared_pred}
        \EndIf
        \If{$\hat x^k$ is found \textbf{and} $\ared_k \geq c_1 \pred_k$\label{algo_line_acceptance_test}}
            \State Set $x^{k+1} = \hat x^k$ and choose $B_{k+1} \in \R^{n \times n}$, $\mu_{k+1} \in [ \mumin, \mumax ]$. \label{algo_line_K_commands_2}\Comment{successful}
        \Else
            \State Set $x^{k+1}=x^k$, $B_{k+1} = B_k$ and $\mu_{k+1}=\sigma_2\mu_k$.\Comment{unsuccessful}
        \EndIf
        \EndFor
    \end{algorithmic}
\end{algorithm}

At every iteration \cref{alg:RPQN} tries to obtain a solution of the regularized proximal quasi-Newton subproblem~\eqref{eq_subproblem}.
If a candidate is found and passes the acceptance tests in \cref{algo_line_acceptance_test}, then it is accepted as the next iterate; otherwise the current point is retained and the regularization parameter is increased.

Parameter $\mu_k$ does not require strict update rules: \cref{algo_line_K_commands_2} corresponds to picking any element from a user-defined compact (yet arbitrarily large) interval $[\mumin,\mumax]$.
The upper bound $\mumax$ guarantees boundedness of all $\mu_k$ following a successful iteration.
Later, in \cref{lemma:mu_bounded}, we will see that this transfers to a crucial local boundedness property for $\mu_k$.

Note that $B_k$ is defined in a way that it remains constant in unsuccessful iterations.
This is natural since $ B_k $ gives an approximation of the (not necessarily existing) Hessian of $ f $ at $ x^k $, so that no change in $ x^k $ implies no change in the approximation $ B_k $.
Note that keeping $ B_k $ fixed in unsuccessful iterations $ k $ also reduces the computational costs in the subsequent iteration.

In what follows, we denote by
\begin{align*}\label{eq:set-S}
	\mathcal{K} \coloneqq{}& \left\{
		k \in \N_0 \,\middle\vert\, \hat{x}^k \text{ is found at \cref{algo_line_subproblem}}
	\right\},\\
	\mathcal{S} \coloneqq{}& \left\{
		k \in \mathcal{K} \,\middle\vert\, \text{iteration } k \text{ is successful}
	\right\}, \\
	\mathcal{U} \coloneqq{}& \left\{
		k \in \mathcal{K} \,\middle\vert\, \text{iteration } k \text{ is unsuccessful}
		\right\},
\end{align*}
the sets of \emph{computed}, \emph{successful}, and \emph{unsuccessful} iterations of \cref{alg:RPQN}.

\subsection{Convergence analysis}

The remainder of this section is devoted to the global convergence analysis of \cref{alg:RPQN}.
Throughout our convergence analysis, we make the implicit assumption that,
for all iterations $k \in \N_0$, a solution $\hat x^k$ is found at \cref{algo_line_subproblem} of \cref{alg:RPQN} whenever subproblem \eqref{eq_subproblem} admits one.

We begin with a standard structural observation: every subproblem solution can be expressed as a scaled proximal step.
Although this is not needed for the convergence proofs, it is the key relation underlying the subproblem solver in \cref{sec_subproblems}.

\begin{lemma}\label{lem:prox_characterization}
    Suppose that \cref{assumption_general} holds. Let \(k\in \mathcal K\) and suppose \(G_k\in\Smat^n\). Then the inclusion $\hat x^k \in \prox_\varphi^{G_k} \left( x^k - G_k^{-1}\nabla f(x^k) \right)$ holds.
\end{lemma}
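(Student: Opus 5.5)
The idea is to complete the square in the model $\hat q_k$. This shows that $\hat q_k$ and the function defining the scaled proximal operator differ only by an additive constant, so they have the same set of minimizers. No convergence machinery and no result from \cref{Sec:Prelims} is needed beyond the definition of $\prox_\varphi^{H}$ with $\tfrac12\|\cdot\|_H^2$, read with $H=G_k$.

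First I fix the setting. Since the statement writes $G_k^{-1}$, I take $G_k$ to be invertible, as the statement implicitly assumes. I read $\prox_\varphi^{G_k}(z)$ as $\argmin_{y}\{\varphi(y)+\tfrac12\|y-z\|_{G_k}^2\}$, with $\|v\|_{G_k}^2=\innerprod{v}{G_k v}$ as in the notation section. This reading is meaningful for any symmetric $G_k$, not only positive definite ones.

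Next I abbreviate $g\coloneqq\nabla f(x^k)$ and set $z\coloneqq x^k-G_k^{-1}g$. For arbitrary $x$, writing $d=x-x^k$, I expand
\[
\tfrac12\|x-z\|_{G_k}^2=\tfrac12 (d+G_k^{-1}g)^\top G_k(d+G_k^{-1}g)
=\tfrac12 d^\top G_k d+g^\top d+\tfrac12 g^\top G_k^{-1}g .
\]
The cross terms combine into $g^\top d$ because $G_k$ is symmetric, which gives $d^\top g=g^\top G_k^{-1}G_k d$.

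Comparing this with \eqref{eq_subproblem} gives, for all $x\in\R^n$,
\[
\hat q_k(x)=\varphi(x)+\tfrac12\|x-z\|_{G_k}^2+\Bigl(f(x^k)-\tfrac12 g^\top G_k^{-1}g\Bigr).
\]
The bracketed term does not depend on $x$, and both sides take the value $+\infty$ outside $\dom\varphi$. Hence the two functions have the same minimizers. Since $k\in\mathcal K$, the point $\hat x^k$ minimizes $\hat q_k$, so it minimizes $\varphi(\cdot)+\tfrac12\|\cdot-z\|_{G_k}^2$, that is, $\hat x^k\in\prox_\varphi^{G_k}(z)$.

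I expect the only delicate point to be the interpretation of the scaled prox when $G_k$ is merely symmetric, together with the invertibility of $G_k$. I would handle both by stating them explicitly as the standing reading of the lemma rather than deriving them. The algebra itself is routine.
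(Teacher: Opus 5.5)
Your proposal is correct and follows the paper's proof: both complete the square to show that $\hat q_k$ differs from $\varphi(\cdot)+\tfrac12\|\cdot-(x^k-G_k^{-1}\nabla f(x^k))\|_{G_k}^2$ only by an additive constant (using symmetry of $G_k$ for the cross terms), and both treat invertibility of $G_k$ as implicit in the statement. The only difference is direction: the paper starts from the $\argmin$ defining the scaled prox and discards constants until it reaches $\argmin \hat q_k$.
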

\begin{proof}
    By the definition of the scaled proximal operator, we have
    \begin{align*}
        \prox_\varphi^{G_k}& \left( x^k - G_k^{-1}\nabla f(x^k) \right) = \argmin_{x \in \R^n} \left\{ \varphi(x) + \frac12 \big\Vert x - (x^k - G_k^{-1} \nabla f(x^k)) \big\Vert_{G_k}^2 \right\} \\
        &= \argmin_{x \in \R^n} \left\{ \varphi(x) + \frac12 \left( x - x^k + G_k^{-1} \nabla f(x^k) \right)^\top G_k \left( x - x^k + G_k^{-1} \nabla f(x^k) \right)\right\} \\
        &= \argmin_{x \in \R^n} \left\{ \varphi(x) + \frac12 \Big[ (x-x^k)^\top G_k (x-x^k) + 2 (x-x^k)^\top G_k \left( G_k^{-1} \nabla f(x^k) \right) \Big] \right\} \\
        &= \argmin_{x \in \R^n} \left\{ \varphi(x) + \nabla f(x^k)^\top (x-x^k) + \frac12 (x-x^k)^\top G_k (x-x^k) \right\}
        = \argmin_{x \in \R^n} \hat q_k(x) \ni \hat x^k,
    \end{align*}	
    where we repeatedly used the fact that adding a constant to the objective function does not change its minimizer.
    This concludes the proof.
\end{proof}

Using the relationship between $q_k$ and $\hat q_k$, we now establish an explicit lower estimate for $\pred_k$ together with two criteria that guarantee stationarity of $x^k$.
Recall that $d^k$ is defined in \eqref{eq:d_definition}.

\begin{lemma}\label{lemma:pred}
    Suppose that \cref{assumption_general} holds.
    For all $k \in \mathcal K$ it holds that $\pred_k \geq \frac{\mu_k}{2} \Vert d^k \Vert^2$.
    Furthermore, we have
    \[
    \pred_k = 0 \quad \implies \quad  \Vert d^k \Vert = 0  \quad \implies  \quad x^k \text{ is a stationary point of \eqref{problem}.}
    \]
\end{lemma}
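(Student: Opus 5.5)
The lower bound will come from comparing the model values at $\hat x^k$ and at $x^k$. We first note that $x^k\in\dom\varphi$ for all $k$. This holds because $x^0\in\dom\varphi$, accepted iterates are minimizers of $\hat q_k$, and an unsuccessful step keeps $x^k$; a minimizer has finite value since $\hat q_k(x^k)=F(x^k)<\infty$. In particular $\hat q_k(x^k)=f(x^k)+\varphi(x^k)=F(x^k)$. Since $\hat x^k$ minimizes $\hat q_k$, we get $\hat q_k(\hat x^k)\le \hat q_k(x^k)=F(x^k)$. By \eqref{eq:Gk}, the two models differ only in the shift term:
\[
\hat q_k(x)=q_k(x)+\tfrac{\mu_k}{2}\|x-x^k\|^2 .
\]
Hence
\[
\pred_k=F(x^k)-q_k(\hat x^k)=F(x^k)-\hat q_k(\hat x^k)+\tfrac{\mu_k}{2}\|d^k\|^2\ge \tfrac{\mu_k}{2}\|d^k\|^2 .
\]

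For the first implication, suppose $\pred_k=0$. Then $\frac{\mu_k}{2}\|d^k\|^2\le 0$, and since $\mu_k>0$ this forces $d^k=0$. That $\mu_k>0$ follows by induction from $\mu_0\ge\mumin>0$, since the updates either pick $\mu_{k+1}\in[\mumin,\mumax]$ or multiply by $\sigma_2>1$.

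For the second implication, suppose $d^k=0$, i.e. $\hat x^k=x^k$. Then $x^k$ is a (global, hence local) minimizer of $\hat q_k$, so Fermat's rule gives $0\in\partial\hat q_k(x^k)$. We write $\hat q_k=g_k+\varphi$ with the smooth part
\[
g_k(x)=f(x^k)+\nabla f(x^k)^\top(x-x^k)+\tfrac12(x-x^k)^\top G_k(x-x^k).
\]
By the sum rule for a smooth function plus $\varphi$, we have $\partial\hat q_k(x^k)=\nabla g_k(x^k)+\partial\varphi(x^k)$. Moreover $\nabla g_k(x^k)=\nabla f(x^k)$, because the quadratic term has zero gradient at $x^k$. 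Therefore
\[
0\in\nabla f(x^k)+\partial\varphi(x^k)=\partial F(x^k),
\]
again by the sum rule, so $x^k$ is stationary.

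The main point requiring care is that the sum rule applies to $g_k+\varphi$, not only to $f+\varphi$. This is fine because the rule quoted from \cite[Chapter 8]{rock_wets_1998} holds for any $C^1$ function added to $\varphi$, and $g_k$ is a quadratic. If $G_k$ is not symmetric, we use $\frac12(G_k+G_k^\top)$ in the gradient of $g_k$; its value at $x^k$ is still zero, so the argument is unaffected.
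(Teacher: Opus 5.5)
Your proof is correct and follows the same route as the paper. You compare $\hat q_k(\hat x^k)\le \hat q_k(x^k)=F(x^k)$, use $q_k=\hat q_k-\frac{\mu_k}{2}\|\cdot-x^k\|^2$ to get the bound, and apply Fermat's rule with the smooth-plus-$\varphi$ sum rule at $\hat x^k=x^k$; your extra checks ($x^k\in\dom\varphi$, $\mu_k>0$, the sum rule for the quadratic model, possible nonsymmetry of $G_k$) are details the paper leaves implicit, and all are sound.
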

\begin{proof}
    By definition, it holds that $\hat q_k(x^k)=F(x^k)$. Hence, because $\hat x^k$ minimizes $\hat q_k$, we have $F(x^k)-\hat q_k(\hat x^k) \geq F(x^k)-\hat q_k(x^k) = 0$. Using the identity $q_k(x)=\hat q_k(x)-\frac{\mu_k}{2}\|x-x^k\|^2$, we therefore get
	\begin{align}\label{eq_pred}
		\pred_k
		= F(x^k)-q_k(\hat x^k)
		= F(x^k)-\hat q_k(\hat x^k)+\frac{\mu_k}{2}\Vert d^k \Vert^2
		\geq \frac{\mu_k}{2}\Vert d^k \Vert^2.
	\end{align}
    Since $\mu_k > 0$, $\pred_k = 0$ implies $\Vert d^k \Vert = 0$. Then it follows that
    \[
    0 \in \partial \hat q_k(\hat x^k) = \partial \hat q_k(x^k) = \nabla f(x^k) + \partial \varphi(x^k) = \partial F(x^k).
    \]
    Hence, $x^k$ is a stationary point of \eqref{problem}.
\end{proof}

The following bound is a direct consequence of \cref{lemma:pred} and will be used to handle unsuccessful iterations when the regularization parameter becomes large.

\begin{lemma}\label{lemma:predkgeqdk}
	For all $k \in \mathcal K$ and any $\omega > 0$ it holds that  
	\[
	\sqrt{\mu_k \pred_k} \geq \omega
	\implies
	\pred_k \geq \frac{\omega}{\sqrt 2} \Vert d^k \Vert.
	\]
\end{lemma}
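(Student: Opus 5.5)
The plan is to combine the lower bound $\pred_k \geq \frac{\mu_k}{2}\Vert d^k\Vert^2$ from \cref{lemma:pred} with the hypothesis $\sqrt{\mu_k \pred_k}\geq\omega$. The natural route is to bound $\Vert d^k\Vert$ from above by a quantity involving $\pred_k$ and $\mu_k$, and then use the hypothesis to replace the $\mu_k$-dependence by $\omega$.

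First, I rearrange \cref{lemma:pred}. Since $\mu_k>0$, it gives $\Vert d^k\Vert \leq \sqrt{2\pred_k/\mu_k}$; note that $\pred_k\geq 0$, so the square root is well defined. Multiplying by $\omega/\sqrt2$ yields
\[
\frac{\omega}{\sqrt2}\Vert d^k\Vert \leq \omega\sqrt{\frac{\pred_k}{\mu_k}}.
\]

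Next, I apply the hypothesis $\omega \leq \sqrt{\mu_k\pred_k}$ to obtain
\[
\omega\sqrt{\frac{\pred_k}{\mu_k}} \leq \sqrt{\mu_k\pred_k}\,\sqrt{\frac{\pred_k}{\mu_k}} = \pred_k,
\]
which is exactly the claim. There is no degenerate case to treat separately: the hypothesis with $\omega>0$ forces $\pred_k>0$, and in any event the chain of inequalities above is valid whenever $\pred_k\geq 0$.

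I do not expect any real obstacle. The only point needing care is the nonnegativity of $\pred_k$ and the positivity of $\mu_k$, which justify taking square roots and dividing; both follow from \cref{lemma:pred} and the construction $\mu_k>0$ in the algorithm.
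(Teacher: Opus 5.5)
Your proposal is correct and is essentially the paper's argument. Both combine $\sqrt{\pred_k}\geq \omega/\sqrt{\mu_k}$ (from the hypothesis) with $\sqrt{\pred_k}\geq\sqrt{\mu_k/2}\,\Vert d^k\Vert$ (from \cref{lemma:pred}); the only difference is that you run the chain of inequalities starting from $\Vert d^k\Vert$ instead of from $\pred_k$.
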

\begin{proof}
	\cref{lemma:pred} together with $\sqrt{\mu_k \pred_k} \geq \omega$ implies
	\[
	\pred_k \geq \sqrt{\pred_k} \frac{\omega}{\sqrt{\mu_k}} \geq \sqrt{\frac{1}{2}\mu_k \Vert d^k \Vert^2} \frac{\omega}{\sqrt{\mu_k}} = \frac{\omega}{\sqrt 2} \Vert d^k \Vert,
	\]
	where the second inequality is due to \eqref{eq_pred}.
\end{proof}

Together with the update mechanism in \cref{alg:RPQN}, the previous lemmas imply three basic properties that will be used repeatedly in the convergence analysis.

\begin{lemma}\label{lemma:basic_properties}
    Suppose that \cref{assumption_general} holds.
    Then \cref{alg:RPQN} satisfies the following properties:
    \begin{enumerate}[label=(\alph*)]
      \item\label{lemma:basic_mu} For all $k \in \N_0$ it holds that $\mu_k \geq \mumin$.
      \item\label{lemma:basic_pred} For all $k \in \mathcal K$ it holds that $\pred_k \geq \frac{\mumin}{2} \Vert d^k \Vert^2$.
      \item\label{lemma:basic_F} The sequence $\{ F(x^k) \}$ is monotonically decreasing.
    \end{enumerate}
\end{lemma}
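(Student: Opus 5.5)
Each of the three claims follows directly from the update rules of \cref{alg:RPQN} combined with \cref{lemma:pred}. We establish them in order, since \ref{lemma:basic_pred} relies on \ref{lemma:basic_mu}.

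For \ref{lemma:basic_mu}, we argue by induction on $k$. The base case is immediate: $\mu_0 \in [\mumin,\mumax]$ by the initialization. For the inductive step, suppose $\mu_k \geq \mumin$. If iteration $k$ is successful, \cref{algo_line_K_commands_2} chooses $\mu_{k+1}\in[\mumin,\mumax]$, so $\mu_{k+1}\geq\mumin$. Otherwise (either $k\notin\mathcal K$ or $k\in\mathcal U$), the algorithm sets $\mu_{k+1}=\sigma_2\mu_k$, and since $\sigma_2>1$ and $\mu_k>0$ we get $\mu_{k+1} > \mu_k \geq \mumin$.

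For \ref{lemma:basic_pred}, let $k\in\mathcal K$. \cref{lemma:pred} gives $\pred_k\geq \frac{\mu_k}{2}\|d^k\|^2$. By \ref{lemma:basic_mu} we have $\mu_k\geq\mumin$, so $\pred_k\geq\frac{\mumin}{2}\|d^k\|^2$.

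For \ref{lemma:basic_F}, we show $F(x^{k+1})\leq F(x^k)$ for every $k$. If iteration $k$ is not successful, then $x^{k+1}=x^k$ and equality holds. If $k\in\mathcal S$, the acceptance test gives $\ared_k\geq c_1\pred_k$. By \ref{lemma:basic_pred}, $\pred_k\geq0$, and $c_1>0$, so $\ared_k\geq0$. This means $F(x^{k+1})=F(\hat x^k)\leq F(x^k)$.

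One point deserves care, though it is not a real obstacle: the objective values must be finite. Since $x^0\in\dom\varphi$, we have $F(x^0)<\infty$. Monotone decrease then keeps every iterate in $\dom\varphi$, so $F(x^k)$ is finite for all $k$. In particular $\pred_k$ and $\ared_k$ are well defined real numbers; the objective value in $\pred_k$ is finite because $\hat q_k(\hat x^k)\leq\hat q_k(x^k)=F(x^k)$. This formally closes the induction in \ref{lemma:basic_F}.
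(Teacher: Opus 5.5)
Your proof is correct and follows essentially the same route as the paper: (a) by induction on the update rules, (b) from \cref{lemma:pred} combined with (a), and (c) from $\ared_k \geq c_1 \pred_k \geq 0$ on successful iterations and $x^{k+1}=x^k$ otherwise. Your extra remark, that $F(x^k)$ stays finite because $x^0\in\dom\varphi$ and $F$ decreases monotonically, makes explicit a point the paper leaves implicit.
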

\begin{proof}
    Statement \ref{lemma:basic_mu} follows inductively from the possible updates for $\mu_k$ in \cref{alg:RPQN}. Statement \ref{lemma:basic_pred} then simply follows from \cref{lemma:pred} and \ref{lemma:basic_mu}.
    Let $k \in \N_0$.
    If $k$ is an unsuccessful iteration, then it simply holds that $F(x^{k+1}) = F(x^k)$.
    If $k$ is successful, then it follows from \ref{lemma:basic_pred} that
    \[
    F(x^k)-F(x^{k+1})
    =
    \ared_k
    \geq
    c_1 \pred_k
    \geq
    0.
    \]
    This shows that the sequence $\{F(x^k)\}$ is monotonically decreasing.
\end{proof}

Our global convergence analysis is organized with a hierarchy of regularity assumptions.
Starting from \cref{assumption_general}, we first establish elementary properties of the computed steps $d^k$ and arrive at \cref{cor:liminf_dk}.
We then incorporate boundedness of the matrices $\{B_k\}$ to obtain a true stationarity conclusion; see \cref{thm:liminf_stationarity}.
In the final step, a local Lipschitz assumption on $\nabla f$ allows us to promote these results to stationarity of all accumulation points; see \cref{theorem:global_convergence_1}.

\begin{lemma}\label{theorem:global_convergence_cvx}
    Suppose that \cref{assumption_general} holds and \cref{alg:RPQN} performs infinitely many successful iterations.
    Then it holds that $\lim\limits_{k \in \mathcal S} \Vert d^k \Vert = 0$.
\end{lemma}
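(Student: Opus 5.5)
The proof is a telescoping argument on the monotone sequence of function values, using the sufficient-decrease test together with the lower bound on the predicted reduction. By \cref{lemma:basic_properties}\ref{lemma:basic_F}, the sequence $\{F(x^k)\}$ is monotonically decreasing. By \cref{assumption_general}\ref{ass_inf_F}, it is bounded below by $\inf F > -\infty$. Hence it converges to some finite limit $F^\ast$. Since $x^0 \in \dom\varphi$, all values are finite.

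Next I sum the decreases over all iterations. For an unsuccessful iteration $k$, nothing changes, so $F(x^{k+1}) = F(x^k)$. For a successful iteration $k \in \mathcal S$, the acceptance test at \cref{algo_line_acceptance_test} and \cref{lemma:basic_properties}\ref{lemma:basic_pred} give
\[
F(x^k) - F(x^{k+1}) = \ared_k \geq c_1 \pred_k \geq \frac{c_1 \mumin}{2} \Vert d^k \Vert^2 .
\]
Telescoping over $k = 0,\dots,N$ and discarding the unsuccessful terms, which are zero, yields
\[
\frac{c_1\mumin}{2}\sum_{k\in\mathcal S,\,k\le N}\Vert d^k\Vert^2 \leq F(x^0)-F(x^{N+1}) \leq F(x^0)-\inf F .
\]
Letting $N\to\infty$ shows that $\sum_{k\in\mathcal S}\Vert d^k\Vert^2<\infty$. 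Because $\mathcal S$ is infinite, the terms of this convergent series tend to zero along $\mathcal S$, so $\lim_{k\in\mathcal S}\Vert d^k\Vert = 0$.

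There is no real obstacle here. The only points needing care are that $\mu_k \geq \mumin > 0$ holds uniformly, which is exactly why \cref{lemma:basic_properties}\ref{lemma:basic_mu} is needed, and that $d^k$ is well defined for $k \in \mathcal S \subseteq \mathcal K$. The hypothesis of infinitely many successful iterations is used only to make the limit along $\mathcal S$ meaningful.
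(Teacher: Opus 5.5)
Your proof is correct and follows essentially the same route as the paper: combine the acceptance test with \cref{lemma:basic_properties}\ref{lemma:basic_pred} to get $F(x^k)-F(x^{k+1}) \geq \frac{c_1\mumin}{2}\Vert d^k\Vert^2$ on $\mathcal S$, then telescope against the lower bound $\inf F$ to obtain $\sum_{k\in\mathcal S}\Vert d^k\Vert^2<\infty$. Your extra remarks on finiteness of the function values and on the role of $\mathcal S$ being infinite are harmless refinements of the same argument.
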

\begin{proof}
    By definition, for every $k\in\mathcal S$,
    \[
    F(x^k)-F(\hat x^k)
    =
    \ared_k
    \geq
    c_1 \pred_k
    \geq
    \frac{c_1 \mumin}{2} \Vert d^k \Vert^2,
    \]
    where we used \cref{lemma:basic_properties}\ref{lemma:basic_pred} in the last inequality.
    Since $F$ is bounded from below by \cref{assumption_general}, summation over $k$ yields
    \[
    \infty
    >
    \sum_{k=0}^{\infty}\bigl[F(x^k)-F(x^{k+1})\bigr]
    =
    \sum_{k\in\mathcal S}\bigl[F(x^k)-F(\hat x^k)\bigr]
    \geq
    \frac{c_1 \mumin}{2}\sum_{k\in\mathcal S}\|d^k\|^2.
    \]
    Therefore $\lim_{k \in \mathcal S}\|d^k\|=0$. 
\end{proof}

The following result concerns the case with finitely many successful iterations.
\begin{lemma}\label{lemma:q_hat}
    Suppose that \cref{assumption_general} holds.
    The set $\mathcal K$ is infinite and for every index set $\mathcal L \subset \N_0$ with $\{x^k\}_{\mathcal L}$ and $\{B_k\}_{\mathcal L}$ bounded and $\{\mu_k\}_{\mathcal L} \to \infty$, the set $\mathcal L' \coloneqq \mathcal L \cap \mathcal K$ is also infinite and it holds that $\{ d^k \}_{\mathcal L'} \to 0$.
\end{lemma}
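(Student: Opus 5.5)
Two things must be shown. First, that $\mathcal K$ is infinite. Second, that for bounded $\{x^k\}_{\mathcal L}$ and $\{B_k\}_{\mathcal L}$ with $\mu_k\to\infty$ on $\mathcal L$, the set $\mathcal L\cap\mathcal K$ is infinite and the steps vanish along it. The common tool is prox-boundedness: once $\mu_k$ is large relative to $\|B_k\|$, the subproblem \eqref{eq_subproblem} is a scaled proximal problem with parameter below the threshold $\gamma_\varphi$, so \cref{prop:prox_boundedness_properties} gives a minimizer. By our standing convention, the algorithm then finds it.

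\emph{Existence of subproblem solutions.} Suppose $\lambda_{\min}(G_k)\ge \mu_k-\|B_k\| > 1/\gamma$ for some fixed $\gamma\in(0,\gamma_\varphi)$. Then
\[
\hat q_k(x)\ge f(x^k)+\nabla f(x^k)^\top(x-x^k)+\tfrac{1}{2\gamma}\|x-x^k\|^2+\varphi(x)+\tfrac{\varepsilon}{2}\|x-x^k\|^2
\]
with $\varepsilon>0$. Write $\nabla f(x^k)^\top(x-x^k)\ge -\tfrac{\varepsilon}{2}\|x-x^k\|^2-\tfrac{1}{2\varepsilon}\|\nabla f(x^k)\|^2$. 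Since $\envelope^\gamma_\varphi(x^k)>-\infty$, it follows that $\hat q_k$ is bounded below and level-bounded. The level-boundedness uses the fact that prox-boundedness at the slightly larger parameter $\gamma'\in(\gamma,\gamma_\varphi)$ gives quadratic growth of $\varphi(x)+\frac{1}{2\gamma}\|x-x^k\|^2$. Together with lower semicontinuity, $\hat q_k$ attains its minimum, so $k\in\mathcal K$.

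\emph{$\mathcal K$ is infinite.} Suppose not. Then from some index on no subproblem is solved, so every iteration is unsuccessful. Hence $x^k$ and $B_k$ are eventually constant and $\mu_{k+1}=\sigma_2\mu_k\to\infty$. Eventually $\mu_k>\|B_k\|+1/\gamma$, and the previous step gives $k\in\mathcal K$, a contradiction.

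\emph{$\mathcal L'$ is infinite.} Let $\|B_k\|\le b$ and $\|x^k\|\le r$ for $k\in\mathcal L$. Since $\mu_k\to\infty$ on $\mathcal L$, all but finitely many $k\in\mathcal L$ satisfy $\mu_k>b+1/\gamma$, and the existence step puts these $k$ in $\mathcal K$. Thus $\mathcal L'$ is cofinite in $\mathcal L$, hence infinite.

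\emph{$d^k\to0$ on $\mathcal L'$.} This is the main obstacle, since we need uniformity over the moving centers $x^k$. Compare $\hat q_k(\hat x^k)\le\hat q_k(x^k)=F(x^k)$. Using $\lambda_{\min}(G_k)\ge\mu_k-b$ gives
\[
\varphi(\hat x^k)+\nabla f(x^k)^\top d^k+\tfrac{\mu_k-b}{2}\|d^k\|^2\le \varphi(x^k).
\]
By \cref{lemma:basic_properties}\ref{lemma:basic_F} and continuity of $f$ on the bounded set, $\varphi(x^k)=F(x^k)-f(x^k)\le F(x^0)-f(x^k)$ is bounded above. The gradients $\nabla f(x^k)$ are bounded as well, since the $x^k$ stay in a compact subset of $\dom\varphi$ (closure taken in the open set of \cref{assumption_general}\ref{ass_psi}). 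For the lower bound on $\varphi(\hat x^k)$, use prox-boundedness with a fixed base point, say $0$:
\[
\varphi(y)\ge \envelope^\gamma_\varphi(0)-\tfrac{1}{2\gamma}\|y\|^2\ge c-\tfrac1\gamma\|x^k\|^2-\tfrac1\gamma\|d^k\|^2 .
\]
Substituting gives
\[
\tfrac{\mu_k-b-2/\gamma}{2}\|d^k\|^2-C\|d^k\|-C\le 0
\]
with $C$ independent of $k$. Hence $\|d^k\|=O(\mu_k^{-1/2})\to0$ along $\mathcal L'$.
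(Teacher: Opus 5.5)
Your proof is correct and follows the paper's route. The prox-boundedness bound makes $\hat q_k$ coercive once $\mu_k>\|B_k\|+1/\gamma$, so $\mathcal K$ is infinite and $\mathcal L'$ is cofinite in $\mathcal L$; then comparing $\hat q_k(\hat x^k)\le\hat q_k(x^k)=F(x^k)\le F(x^0)$ against that bound forces $d^k\to0$. The only difference is that you anchor the envelope at the fixed point $0$ rather than at $x^k$, which avoids using continuity of $\envelope^\gamma_\varphi$ and gives the explicit rate $\|d^k\|=O(\mu_k^{-1/2})$.

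One caveat, which also applies to the paper's proof: your parenthetical claim that the $x^k$ stay in a compact subset of $\dom\varphi$ (or of the open set in \cref{assumption_general}\ref{ass_psi}) does not follow from the assumptions, since a bounded subset of $\dom\varphi$ need not have its closure there. So boundedness of $f(x^k)$ and $\nabla f(x^k)$ along $\mathcal L$ is a tacit regularity assumption that the paper makes as well, not a consequence of \cref{assumption_general}.
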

\begin{proof}
    By \cref{def:prox_boundedness,prop:prox_boundedness_properties}, the Moreau envelope $\envelope_\varphi^\gamma$ is real-valued and continuous and satisfies, for all sufficiently small $\gamma>0$, $\envelope_\varphi^\gamma(z) > -\infty$ for all $z \in \R^n$.
    Hence, $\varphi(x) \geq \envelope_\varphi^\gamma(x^k) - \frac{1}{2\gamma}\Vert x - x^k \Vert^2$ for all $x \in \R^n$, from which we obtain
    \begin{align}\label{eq:q_hat}
	    \hat q_k(x)
	    ={}&
	    q_k(x) + \frac{\mu_k}{2} \Vert x-x^k \Vert^2
	    \nonumber\\
	    ={}&
	    f(x^k) + \nabla f(x^k)^\top (x-x^k) + \frac{1}{2} (x-x^k)^\top B_k (x-x^k) + \varphi(x) + \frac{\mu_k}{2} \Vert x-x^k \Vert^2
	    \nonumber\\
	    \geq{}&
	    f(x^k) + \nabla f(x^k)^\top (x-x^k) + \frac{1}{2} (x-x^k)^\top B_k (x-x^k) \\
        &+ \frac{1}{2} \left( \mu_k - \frac{1}{\gamma} \right) \Vert x-x^k \Vert^2 + \envelope_\varphi^\gamma(x^k)
	    \nonumber\\
	    \geq{}&
	    f(x^k) - \Vert \nabla f(x^k) \Vert \Vert x-x^k \Vert - \frac{1}{2} \Vert B_k \Vert \Vert x-x^k \Vert^2 + \frac{1}{2} \left( \mu_k - \frac{1}{\gamma} \right) \Vert x-x^k \Vert^2 + \envelope_\varphi^\gamma(x^k)
	    \nonumber\\
	    ={}&
	    f(x^k) - \Vert \nabla f(x^k) \Vert \Vert x-x^k \Vert + \frac{1}{2} \left( \mu_k - \frac{1}{\gamma} - \Vert B_k \Vert \right) \Vert x-x^k \Vert^2 + \envelope_\varphi^\gamma(x^k), 
    \end{align}
    where we used prox-boundedness of $\varphi$ in the first inequality, and the Cauchy-Schwarz inequality in the second.
    This shows that for $\mu_k > \frac{1}{\gamma}+\Vert B_k \Vert$, $\hat q_k$ is coercive and together with lower semi-continuity of $\hat q_k$ it follows by Weierstrass' theorem that $\hat q_k$ has at least one minimizer.
    By our implicit assumption, such a minimizer $\hat x^k$ is found by \cref{alg:RPQN}.
    Assume, by contradiction, that $\mathcal K$ is only a finite set. Then all iterates are eventually unsuccessful.
    However, then it must hold that $\mu_k > \frac{1}{\gamma}+\Vert B_k \Vert$ for $k$ sufficiently large, a contradiction.
    Hence, $\mathcal K$ is an infinite set.
    For an index set $\mathcal L \subset \N_0$ with $\{x^k\}_{\mathcal L}$ and $\{B_k\}_{\mathcal L}$ bounded and $\{\mu_k\}_{\mathcal L} \to \infty$, it follows immediately from \eqref{eq:q_hat} that $k \in \mathcal L'$ for $k \in \mathcal L$ sufficiently large.
    For all $k \in \mathcal L'$ it holds that $F(x^k) = \hat q_k(x^k) \geq \hat q_k(\hat x^k)$, which together with \eqref{eq:q_hat} implies $\{d^k\}_{\mathcal L'} \to 0$, as otherwise the aforementioned properties of $\mathcal L$ together with continuity of $\nabla f$ and  $\envelope_\varphi^\gamma$ would yield a contradiction to $F(x^k) \leq F(x^0)$, which follows from \cref{lemma:basic_properties}\ref{lemma:basic_F}.
\end{proof}

\begin{corollary}\label{cor:liminf_dk}
    Suppose that \cref{assumption_general} holds. Then it holds that
    \[
    \liminf_{k \in \mathcal K} \Vert d^k \Vert = 0.
    \]
\end{corollary}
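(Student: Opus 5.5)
The plan is to split into two cases according to whether the set $\mathcal S$ of successful iterations is infinite or finite. By \cref{lemma:q_hat}, $\mathcal K$ is infinite, so the $\liminf$ over $\mathcal K$ is well defined. Since $\mathcal S\subseteq\mathcal K$, any subsequence of $\mathcal K$ along which $\|d^k\|\to0$ gives the claim.

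\emph{Case 1: $\mathcal S$ is infinite.} Here I invoke \cref{theorem:global_convergence_cvx} directly. It gives $\lim_{k\in\mathcal S}\|d^k\|=0$, and since $\mathcal S$ is an infinite subset of $\mathcal K$, this yields $\liminf_{k\in\mathcal K}\|d^k\|=0$.

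\emph{Case 2: $\mathcal S$ is finite.} Let $k_0$ exceed the last successful index. For all $k\ge k_0$ the algorithm performs the unsuccessful update, so $x^k=x^{k_0}$, $B_k=B_{k_0}$ and $\mu_{k+1}=\sigma_2\mu_k$. Since $\sigma_2>1$ and $\mu_{k_0}\ge\mumin>0$, this gives $\mu_k\to\infty$.

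Choose $\mathcal L\coloneqq\{k\in\N_0\mid k\ge k_0\}$. Then $\{x^k\}_{\mathcal L}$ and $\{B_k\}_{\mathcal L}$ are constant, hence bounded, and $\{\mu_k\}_{\mathcal L}\to\infty$. \cref{lemma:q_hat} then says that $\mathcal L'=\mathcal L\cap\mathcal K$ is infinite and $\{d^k\}_{\mathcal L'}\to0$. Since $\mathcal L'\subseteq\mathcal K$, the $\liminf$ over $\mathcal K$ is again $0$.

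The only point needing care is in Case 2: the frozen iterate and matrix must satisfy the hypotheses of \cref{lemma:q_hat}. This is immediate from the unsuccessful-branch update rules in \cref{alg:RPQN}, which apply both to iterations where $\hat x^k$ is not found and to those where it is found but rejected. So I expect no real obstacle; the corollary just combines the two preceding lemmas.
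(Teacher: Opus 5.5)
Your proposal is correct and follows the paper's proof: it invokes \cref{theorem:global_convergence_cvx} when $\mathcal S$ is infinite and \cref{lemma:q_hat} otherwise. You additionally spell out the detail the paper leaves implicit, namely that in the finite case $x^k$ and $B_k$ are frozen while $\mu_k \to \infty$, so \cref{lemma:q_hat} applies with $\mathcal L = \{k \ge k_0\}$.
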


\begin{proof}
    This follows immediately from \cref{theorem:global_convergence_cvx} for the case with infinitely many successful iterations,
    and from \cref{lemma:q_hat} otherwise.
\end{proof}

A small step norm $\Vert d^k \Vert$ is often used in practice as a stopping criterion, and \cref{lemma:pred} explains why this is reasonable in the present setting.
Accordingly, the previous corollary can already be viewed as a (weak) global convergence statement under the basic assumptions alone, in particular, without assuming boundedness of the sequence $\{B_k\}$.
That said, vanishing step lengths are not the same as a quantitative stationarity estimate: even though $\Vert d^k \Vert = 0$ forces $x^k$ to be stationary, one cannot control $\dist(0,\partial F(x^k))$ by $\Vert d^k \Vert$ without extra structure.
For this reason, we now strengthen the assumptions.

\begin{assumption}\label{ass:Bk_bounded}
	The sequence $\{B_k\}$ is bounded, i.e., there exists $M_B > 0$ with $\Vert B_k \Vert \leq M_B$ for all $k \in \N_0$.    
\end{assumption}

We now present two technical results ahead of \cref{theorem:mu_pred_3}.

\begin{lemma}\label{lemma:mu_pred_2}
    Suppose that \cref{assumption_general,ass:Bk_bounded} hold.
    For an infinite set $\mathcal L \subset \mathcal U$, if $\{x^k\}_{\mathcal L}$ is bounded and $\{\mu_k\}_{\mathcal L} \to \infty$, then it holds that $\lim\limits_{k \in \mathcal L} \mu_k \Vert d^k \Vert = 0$.
\end{lemma}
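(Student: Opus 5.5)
The plan is to combine the failure of the acceptance test in \cref{alg:RPQN} with the lower bound on $\pred_k$ from \cref{lemma:pred}. The regularization term gives $\pred_k$ a quadratic lower bound in $\Vert d^k\Vert$. The model mismatch $F(\hat x^k) - q_k(\hat x^k)$ is only $o(\Vert d^k\Vert)$. Comparing these two bounds should force $\mu_k \Vert d^k\Vert \to 0$.

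First, since $\mathcal L \subset \mathcal U \subset \mathcal K$, every $\hat x^k$ with $k \in \mathcal L$ exists. Moreover $\{x^k\}_{\mathcal L}$ is bounded, $\{B_k\}_{\mathcal L}$ is bounded by \cref{ass:Bk_bounded}, and $\mu_k \to \infty$. Hence \cref{lemma:q_hat} yields $\{d^k\}_{\mathcal L} \to 0$.

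For $k \in \mathcal L$ we may assume $d^k \neq 0$, since otherwise $\mu_k\Vert d^k\Vert = 0$ trivially. Write
\[
r_k \coloneqq F(\hat x^k) - q_k(\hat x^k) = f(\hat x^k) - f(x^k) - \nabla f(x^k)^\top d^k - \tfrac12 (d^k)^\top B_k d^k,
\]
so that $\ared_k = \pred_k - r_k$. Since the iteration is unsuccessful, $\ared_k < c_1 \pred_k$, that is, $(1-c_1)\pred_k < r_k$. By the mean value theorem and \cref{ass:Bk_bounded},
\[
r_k \le \Vert d^k\Vert\Bigl(\sup_{t\in[0,1]}\Vert \nabla f(x^k+td^k)-\nabla f(x^k)\Vert + \tfrac{M_B}{2}\Vert d^k\Vert\Bigr) \eqqcolon \varepsilon_k \Vert d^k\Vert .
\]
Combining this with $\pred_k \ge \frac{\mu_k}{2}\Vert d^k\Vert^2$ from \cref{lemma:pred} and dividing by $\Vert d^k\Vert$ gives
\[
\mu_k \Vert d^k\Vert < \frac{2\varepsilon_k}{1-c_1}.
\]
It therefore suffices to show that $\varepsilon_k \to 0$ along $\mathcal L$.

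The main obstacle is showing $\varepsilon_k \to 0$, which needs uniform continuity of $\nabla f$ on a set containing the segments $[x^k, \hat x^k]$. Given $d^k \to 0$ and boundedness of $\{x^k\}_{\mathcal L}$, this holds if all accumulation points of $\{x^k\}_{\mathcal L}$ lie in the open set on which $f$ is $C^1$. I would argue by contradiction on a subsequence. Suppose $\mu_k\Vert d^k\Vert \ge \delta > 0$ along a subsequence. Pass to a further subsequence with $x^k \to \bar x$, and hence also $\hat x^k \to \bar x$.

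It remains to check that $\bar x$ lies in the open set $O \supseteq \dom\varphi$ on which $f$ is $C^1$. I would obtain $\bar x \in \dom\varphi$ from $F(x^k) \le F(x^0)$ together with lower semicontinuity of $\varphi$ and a bound on $f$ near the iterates. If this step turns out to be delicate, I would fall back on the (standard) reading that $f$ is $C^1$ on a neighborhood of the closure of the relevant bounded set. Once $\bar x \in O$, there is a compact ball $\ball_\rho(\bar x) \subset O$. This ball eventually contains all segments $[x^k,\hat x^k]$ of the subsequence, so uniform continuity of $\nabla f$ on it gives $\varepsilon_k \to 0$. That contradicts the bound above and finishes the proof.
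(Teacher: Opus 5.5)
Your proposal is correct and follows essentially the paper's argument: $\{d^k\}_{\mathcal L}\to 0$ via \cref{lemma:q_hat}, the bound $\pred_k\ge\frac{\mu_k}{2}\Vert d^k\Vert^2$ from \cref{lemma:pred}, the mean value theorem combined with uniform continuity of $\nabla f$ on a compact set, and boundedness of $\{B_k\}$. The only difference is that you turn the failed acceptance test directly into the estimate $\mu_k\Vert d^k\Vert < 2\varepsilon_k/(1-c_1)$, whereas the paper assumes $\mu_k\Vert d^k\Vert\ge\omega$ and shows $\ared_k/\pred_k\to 1$.

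The domain subtlety you flag is genuine, and your first idea would not settle it. Lower semicontinuity only gives $\varphi(\bar x)\le F(x^0)-\limsup f(x^k)$, which needs $f(x^k)$ bounded below, and accumulation points can indeed leave $\dom\varphi$ when it is not closed. However, the paper's proof tacitly relies on exactly your fallback reading when it invokes uniform continuity of $\nabla f$ on a compact set containing $x^k$ and $\xi^k$, so nothing is lost relative to it.
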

\begin{proof}
	Let us assume, by contradiction, that there exists some $\omega > 0$ such that $\mu_k \Vert d^k \Vert \geq \omega$ on a subset $\mathcal L' \subset \mathcal L$. 
	From \cref{lemma:q_hat} we know that $\{d^k\}_{\mathcal L'} \to 0$, whereas \eqref{eq_pred} gives
    \begin{equation}
    \label{eq_muk_dk_1}
    \pred_k \geq \frac{\mu_k}{2} \Vert d^k \Vert^2 \geq \frac{\omega}{2} \Vert d^k \Vert
    \end{equation}
    for all $k \in \mathcal L'$.
    By Taylor's theorem, there exists $\xi^k$ on the line segment between $x^k$ and $\hat x^k$ with $f(\hat x^k) = f(x^k) + \nabla f(\xi^k)^\top d^k$.
    From the boundedness of $\{x^k\}_{\mathcal L'}$ and $\{d^k\}_{\mathcal L'} \to 0$ it follows that both $x^k$ and $\xi^k$ belong to a sufficiently large compact set for all $k \in \mathcal L'$.
    The continuity of $\nabla f$ implies uniform continuity on this compact set and, hence, from $\{\Vert \xi^k - x^k \Vert\}_{\mathcal L'} \to 0$ it follows that
    \begin{equation}
    \label{eq:uniform_conv}
    \Vert \nabla f(\xi^k) - \nabla f(x^k) \Vert \to_{\mathcal L'} 0.
    \end{equation}
    Therefore, we get
    \begin{align*}
    	\vert \rho_k - 1 \vert
    	={}&
    	\left\vert \frac{\ared_k - \pred_k}{\pred_k}\right\vert = \frac{\left\vert f(\hat x^k) - f(x^k) - \nabla f(x^k)^\top d^k - \frac{1}{2} \left( d^k \right)^\top B_k d^k \right\vert}{\pred_k}
    	\\
    	\leq{}&
    	\frac{\left\vert f(\hat x^k) - f(x^k) - \nabla f(x^k)^\top d^k \right\vert}{\pred_k} + \frac{\left\vert \left( d^k \right)^\top B_k d^k \right\vert}{2 \pred_k}
    	\\
    	={}&
    	\frac{\left\vert (\nabla f(\xi^k) - \nabla f(x^k))^\top d^k \right\vert}{\pred_k} + \frac{\left\vert \left( d^k \right)^\top B_k d^k \right\vert}{2 \pred_k}
    	\\
    	\leq{}&
    	\frac{\Vert \nabla f(\xi^k) - \nabla f(x^k) \Vert \Vert d^k \Vert}{\pred_k} + \frac{\Vert B_k \Vert \Vert d^k \Vert^2}{2 \pred_k}
    	\leq
    	\frac{2\Vert \nabla f(\xi^k) - \nabla f(x^k) \Vert}{\omega} + \frac{\Vert B_k \Vert \Vert d^k \Vert}{\omega}
    \end{align*}
    for all $k \in \mathcal L'$ sufficiently large, where we used the Cauchy-Schwarz inequality in the second, and \eqref{eq_muk_dk_1} in the third inequality.
    By $\{d^k\}_{\mathcal L'} \to 0$, boundedness of $\{B_k\}$ and \eqref{eq:uniform_conv}, it follows that the right-hand side converges to zero, which implies that $\{\rho_k\}_{\mathcal L'} \to 1$.
    Hence, it holds that $\ared_k \geq c_1 \pred_k$ for $k \in \mathcal L'$ sufficiently large. This yields a contradiction as such a $k$ would be a successful iteration.
\end{proof}

An immediate consequence is the finite-success case: if only finitely many iterations are successful, then both $\{x^k\}$ and $\{B_k\}$ eventually stop changing, while $\mu_k \to \infty$, and therefore $\{d^k\}_{\mathcal K} \to 0$.

\begin{lemma}\label{lemma:xk_unbounded}
    Suppose \cref{assumption_general} holds.
    If $\{x^k\}$ is unbounded, then $\liminf\limits_{k \in \mathcal K} \mu_k \Vert d^k \Vert = 0$.
\end{lemma}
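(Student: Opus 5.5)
The idea is to argue by contradiction and show that a uniform lower bound on $\mu_k \Vert d^k \Vert$ forces the successful steps to be summable. That would make $\{x^k\}$ bounded. No boundedness of $\{B_k\}$ is needed, which matches the fact that only \cref{assumption_general} is assumed.

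\emph{Setup.} Since $\{x^k\}$ is unbounded, \cref{alg:RPQN} must perform infinitely many successful iterations: otherwise $x^{k+1}=x^k$ for all $k$ large and the sequence would be eventually constant. Suppose, contrary to the claim, that $\liminf_{k\in\mathcal K}\mu_k\Vert d^k\Vert>0$. Then there exist $\omega>0$ and $k_0\in\N_0$ with $\mu_k\Vert d^k\Vert\ge\omega$ for all $k\in\mathcal K$ with $k\ge k_0$, and in particular for all such $k\in\mathcal S\subset\mathcal K$.

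\emph{Key estimate.} For these indices, \eqref{eq_pred} gives
\[
\pred_k\ge\frac{\mu_k}{2}\Vert d^k\Vert^2\ge\frac{\omega}{2}\Vert d^k\Vert,
\]
exactly as in \eqref{eq_muk_dk_1}. For successful iterations we also have $F(x^k)-F(x^{k+1})=\ared_k\ge c_1\pred_k$. Summing as in the proof of \cref{theorem:global_convergence_cvx}, and using \cref{lemma:basic_properties}\ref{lemma:basic_F} together with $\inf F>-\infty$ from \cref{assumption_general}\ref{ass_inf_F}, we obtain
\[
\frac{c_1\omega}{2}\sum_{k\in\mathcal S,\,k\ge k_0}\Vert d^k\Vert\le\sum_{k\ge k_0}\bigl[F(x^k)-F(x^{k+1})\bigr]\le F(x^{k_0})-\inf F<\infty.
\]

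\emph{Conclusion.} In unsuccessful iterations $x^{k+1}=x^k$, while in successful ones $x^{k+1}=x^k+d^k$. Hence for every $k\ge k_0$,
\[
\Vert x^k-x^{k_0}\Vert\le\sum_{j\in\mathcal S,\,j\ge k_0}\Vert d^j\Vert<\infty,
\]
so $\{x^k\}$ is bounded, contradicting the hypothesis. Therefore $\liminf_{k\in\mathcal K}\mu_k\Vert d^k\Vert=0$.

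There is no real obstacle here. The only point requiring care is that the contradiction hypothesis must be applied on all of $\mathcal K$ and hence in particular on the successful indices, which are exactly the ones that move $x^k$.
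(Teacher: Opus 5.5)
Your proof is correct and follows essentially the same route as the paper: argue by contradiction, use \eqref{eq_pred} to get $\pred_k \ge \frac{\omega}{2}\Vert d^k\Vert$ on successful iterations, and sum the objective decreases to get $\sum_k \Vert x^{k+1}-x^k\Vert<\infty$, contradicting unboundedness. The paper phrases the final step via the Cauchy property, and your index $k_0$ is slightly more careful than its version, which assumes $\mu_k\Vert d^k\Vert\ge\omega$ for all $k\in\mathcal K$ although the negated $\liminf$ only gives this eventually.
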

\begin{proof}
	We prove the statement by contraposition.
    Suppose that there exists $\omega > 0$ with $\mu_k \Vert d^k \Vert \geq \omega$ for all $k \in \mathcal K$.
    Then it follows that
    \begin{multline*}
    \infty
    >{}
    F(x^0) - \inf F
    \geq
    \sum_{k=0}^\infty F(x^k) - F(x^{k+1})
    =
    \sum_{k \in \mathcal S} F(x^k) - F(x^{k+1})
    \\
    \geq{}
    c_1 \sum_{k \in \mathcal S} \pred_k
    \geq
    \frac{c_1 \omega}{2} \sum_{k \in \mathcal S} \Vert x^{k+1} - x^k \Vert
    =
    \frac{c_1 \omega}{2} \sum_{k = 0}^\infty \Vert x^{k+1} - x^k \Vert ,
    \end{multline*}
    where the last inequality is due to \eqref{eq:d_definition} and \eqref{eq_pred}.
    Thus $\{x^k\}$ is a Cauchy sequence, and thus converges, contradicting the assumption that $\{x^k\}$ is unbounded.
\end{proof}

\begin{theorem}\label{theorem:mu_pred_3}
    Suppose \cref{assumption_general,ass:Bk_bounded} are satisfied.
    Then $\liminf_{k \in \mathcal K} \mu_k \Vert d^k \Vert = 0$ holds.
\end{theorem}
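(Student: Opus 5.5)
We argue by contradiction and split into cases according to the behaviour of $\{x^k\}$ and $\{\mu_k\}$. Assume there is $\omega>0$ with $\mu_k\Vert d^k\Vert\ge\omega$ for all $k\in\mathcal K$. The case of unbounded $\{x^k\}$ is excluded by \cref{lemma:xk_unbounded}. Hence $\{x^k\}$ is bounded, and by \cref{ass:Bk_bounded} so is $\{B_k\}$.

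\emph{Step 1: finitely many successful iterations.} Suppose first that $\mathcal S$ is finite. Then eventually every iteration is unsuccessful, so $x^k$ and $B_k$ are constant and $\mu_k\to\infty$ geometrically. By \cref{lemma:q_hat}, the set $\mathcal K$ is infinite, and all sufficiently large $k$ belong to $\mathcal K\cap\mathcal U$. Applying \cref{lemma:mu_pred_2} with $\mathcal L$ equal to this tail gives $\mu_k\Vert d^k\Vert\to0$, which contradicts the choice of $\omega$.

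\emph{Step 2: infinitely many successful iterations.} Now suppose $\mathcal S$ is infinite. The telescoping argument of \cref{lemma:xk_unbounded} still applies: for $k\in\mathcal S$ we have $\pred_k\ge\frac{\mu_k}{2}\Vert d^k\Vert^2\ge\frac{\omega}{2}\Vert d^k\Vert$. Therefore $\sum_k\Vert x^{k+1}-x^k\Vert<\infty$, so $x^k\to\bar x$ and $\sum_{k\in\mathcal S}\Vert d^k\Vert<\infty$. In particular $\Vert d^k\Vert\to0$ along $\mathcal S$. Combined with $\mu_k\le\mumax$ after each successful iteration, this already makes $\mu_k\Vert d^k\Vert$ small at those successful $k$ whose predecessor was also successful, since then $\mu_k\in[\mumin,\mumax]$. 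The remaining issue is that successful iterations can follow runs of unsuccessful ones during which $\mu_k$ has grown.

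\emph{Step 3: the main obstacle, controlling $\mu_k$ along successful iterations.} Let $\mathcal S_0\subset\mathcal S$ be the successful iterations whose preceding iteration was successful, or $k=0$. If $\mathcal S_0$ is infinite, then $\mu_k\le\mumax$ on it and $\Vert d^k\Vert\to0$ there, so $\mu_k\Vert d^k\Vert\to0$, a contradiction. Otherwise, almost every successful iteration is preceded by a nonempty block of unsuccessful ones, and we split further.

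If $\mu_k$ is unbounded on the union of these blocks, we take $\mathcal L\subset\mathcal U$ as the set of block-ending indices where $\mu_k\to\infty$ along a subsequence. Then \cref{lemma:mu_pred_2} applies, because $\{x^k\}$ and $\{B_k\}$ are bounded, and gives $\mu_k\Vert d^k\Vert\to0$ on $\mathcal L\cap\mathcal K$. This contradicts the choice of $\omega$.

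If instead $\mu_k$ stays bounded, say by some $\bar\mu$, then along $\mathcal S$ we have $\mu_k\Vert d^k\Vert\le\bar\mu\Vert d^k\Vert\to0$, again a contradiction.

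The delicate point is indexing in the unbounded case: the indices with $\mu_k\to\infty$ must actually lie in $\mathcal K$. This follows from \cref{lemma:q_hat}, which guarantees that large enough $\mu_k$ together with bounded $x^k,B_k$ forces $k\in\mathcal K$. I expect this case distinction, whether $\{\mu_k\}$ is bounded on $\mathcal K$ or not, to carry the proof. Put briefly: if $\mu_k$ is bounded along some subsequence of $\mathcal S$, use $\Vert d^k\Vert\to0$ there; otherwise use \cref{lemma:mu_pred_2}.
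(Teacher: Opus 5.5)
Your proof is correct and follows essentially the paper's route. You exclude unbounded $\{x^k\}$ via \cref{lemma:xk_unbounded}, use $\Vert d^k\Vert\to 0$ on $\mathcal S$ when $\mu_k$ stays bounded along successful iterations, and otherwise pass to the unsuccessful predecessors (your block-ending indices) where $\mu_k\to\infty$; these lie in $\mathcal U$ by \cref{lemma:q_hat}, so \cref{lemma:mu_pred_2} applies. Your extra cases (finite $\mathcal S$, infinite $\mathcal S_0$) and the telescoping in Step 2 are subsumed by the paper's single bounded/unbounded $\{\mu_k\}$ dichotomy, and in the bounded case the bound on $\mathcal S$ should read $\max\{\mumax,\sigma_2\bar\mu\}$ rather than $\bar\mu$, which is harmless.
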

\begin{proof}
    If $\{x^k\}$ is unbounded, the result follows from \cref{lemma:xk_unbounded}.
    Hence, assume that $ \{ x^k \} $ is bounded for the remaining part of the proof.
    If $ \{ \mu_k \} $ is bounded, \cref{alg:RPQN} performs infinitely many successful iterations and the result follows from \cref{theorem:global_convergence_cvx}.
    It remains to assume that $ \{ \mu_k \} $ is unbounded.
    Then, there are infinitely many unsuccessful iterations and we can find an index set $\mathcal{L}^\prime\subset \mathcal{K}$ such that $\{ \mu_k \}_{\mathcal{L}^\prime} \to \infty $.
    Eventually, for $k\in \mathcal{L}^\prime$ large enough, it holds that $k-1 \in \mathcal{U}$, otherwise $\mu_k \leq \mumax$ by the update rule at \cref{algo_line_K_commands_2} of \cref{alg:RPQN}.
    Without loss of generality, we can assume that $k-1\in\mathcal{U}$ for all $k\in\mathcal{L}^\prime$.
    Now we define $\mathcal{L} \coloneqq \{ k-1 \,|\, k\in\mathcal{L}^\prime \}$, for which $\{x^k\}_{\mathcal{L}}$ remains bounded and $\{\mu_k\}_{\mathcal{L}}\to\infty$.
    Therefore, \cref{lemma:mu_pred_2} yields $\lim_{k \in \mathcal{L}} \mu_k \| d^k \| = 0$, concluding the proof.
\end{proof}

Convergence in terms of the distance of $\partial F(\hat{x}^k)$ from zero can be established when the smooth term $f$ has additional regularity, without assuming existence of accumulation points.
\begin{theorem}\label{thm:liminf_stationarity}
    Suppose that \cref{assumption_general,ass:Bk_bounded} hold.
    If $\nabla f$ is uniformly continuous, then it holds that 
    \[
    \liminf\limits_{k \in \mathcal K}\dist(0,\partial F(\hat x^k)) = 0.
    \]
\end{theorem}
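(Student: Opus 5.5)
The plan is to use the optimality condition of the subproblem to exhibit an explicit element of $\partial F(\hat x^k)$, and then bound its norm by $\mu_k\Vert d^k\Vert$ plus a term that vanishes as $\Vert d^k\Vert\to 0$. Since $\hat x^k$ minimizes $\hat q_k$ for every $k\in\mathcal K$, Fermat's rule for the limiting subdifferential gives $0\in\partial\hat q_k(\hat x^k)$. The smooth part of $\hat q_k$ is a quadratic, so the sum rule from the preliminaries yields
\[
0\in \nabla f(x^k)+G_k d^k+\partial\varphi(\hat x^k).
\]
Note that $\hat x^k\in\dom\varphi$. Adding $\nabla f(\hat x^k)$ to both sides gives the element
\[
v^k\coloneqq \nabla f(\hat x^k)-\nabla f(x^k)-(B_k+\mu_k I)d^k\in \nabla f(\hat x^k)+\partial\varphi(\hat x^k)=\partial F(\hat x^k).
\]
Consequently,
\[
\dist(0,\partial F(\hat x^k))\le \Vert\nabla f(\hat x^k)-\nabla f(x^k)\Vert + M_B\Vert d^k\Vert+\mu_k\Vert d^k\Vert,
\]
where \cref{ass:Bk_bounded} controls the middle term.

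Next I take a subsequence along which the right-hand side vanishes. By \cref{theorem:mu_pred_3}, there is an infinite index set $\mathcal L\subset\mathcal K$ with $\mu_k\Vert d^k\Vert\to_{\mathcal L}0$. By \cref{lemma:basic_properties}\ref{lemma:basic_mu}, $\mu_k\ge\mumin>0$, so $\Vert d^k\Vert\le \mu_k\Vert d^k\Vert/\mumin\to_{\mathcal L}0$. This kills the $M_B\Vert d^k\Vert$ term as well. For the gradient difference, $\Vert\hat x^k-x^k\Vert=\Vert d^k\Vert\to_{\mathcal L}0$, and uniform continuity of $\nabla f$ gives $\Vert\nabla f(\hat x^k)-\nabla f(x^k)\Vert\to_{\mathcal L}0$. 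No boundedness of $\{x^k\}$ is needed here, which is exactly why the theorem assumes uniform continuity rather than mere continuity. Hence $\dist(0,\partial F(\hat x^k))\to_{\mathcal L}0$, proving the liminf claim.

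The main technical care is in the first step: justifying the subdifferential sum rule at $\hat x^k$ when $G_k$ may be indefinite. This is harmless, because the rule $\partial(g+\varphi)=\nabla g+\partial\varphi$ holds for any $C^1$ function $g$, and the quadratic model part is smooth regardless of the sign of $G_k$. Everything else reduces to the already established \cref{theorem:mu_pred_3}, which carries the real work (the case distinction on bounded or unbounded $\{x^k\}$ and $\{\mu_k\}$).
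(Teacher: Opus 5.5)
Your proposal is correct and follows the paper's proof. You use the subproblem's optimality condition to produce an element of $\partial F(\hat x^k)$ of norm at most $\Vert\nabla f(\hat x^k)-\nabla f(x^k)\Vert+(M_B+\mu_k)\Vert d^k\Vert$, and then \cref{theorem:mu_pred_3} together with uniform continuity of $\nabla f$ finishes the argument. Passing explicitly to an index set along which $\mu_k\Vert d^k\Vert\to 0$, and using $\Vert d^k\Vert\le\mu_k\Vert d^k\Vert/\mu_{\min}$, is a cleaner version of the paper's ``taking the $\liminf$ on both sides'' step.
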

\begin{proof}
    By \cref{lemma:q_hat}, $\mathcal K$ is an infinite set.
    For every iteration $k \in \mathcal K$ it holds that 
    \begin{multline*}
    	0 \in \partial\hat q_k(\hat x^k) = \nabla f(x^k) + G_k d^k + \partial \varphi(\hat x^k)
    	\\
    	\iff\quad
    	\nabla f(\hat x^k) - \nabla f(x^k) - G_k d^k \in \nabla f(\hat x^k) + \partial \varphi(\hat x^k) = \partial F(\hat x^k).
    \end{multline*}
    Therefore we get
    \[
    \dist(0,\partial F(\hat x^k)) \leq \Vert \nabla f(\hat x^k) - \nabla f(x^k) - G_k d^k \Vert \leq \Vert  \nabla f(\hat x^k) - \nabla f(x^k) \Vert + (\Vert B_k \Vert + \mu_k) \Vert d^k \Vert.
    \]
    According to \cref{theorem:mu_pred_3} it holds that $\liminf_{k \in \mathcal K} \mu_k \Vert d^k \Vert = 0$.
    Together with the uniform continuity of $\nabla f$ and the boundedness of $\{B_k\}$, taking the $\liminf_{k \in \mathcal K}$ on both sides yields the result.
\end{proof}

\subsection{Stationarity of accumulation points}

Under one additional \emph{local} regularity assumption, we can derive a stronger convergence statement.
In particular, this allows us to prove local boundedness of the regularization parameter and, eventually, stationarity of every accumulation point of the generated sequence.
See \cite{kanzow_mehlitz_2022,demarchi2022proximal,demarchi_2023} for related proximal-gradient analyses.

\begin{assumption}\label{ass:loc_lipschitz}
	$\nabla f$ is locally Lipschitz continuous on an open set containing $\dom\varphi$, i.e., for all $x \in \dom\varphi$ there exist $L, \varepsilon > 0$ such that
	\[
	\Vert \nabla f(y) - \nabla f(z) \Vert \leq L \Vert y - z \Vert
	\]
	for all $y,z \in \ball_{\varepsilon}(x)$.
\end{assumption}

\begin{lemma}\label{lemma:mu_bounded}
    Suppose that \cref{assumption_general,ass:Bk_bounded,ass:loc_lipschitz} hold.
    Then, for every accumulation point $x^*$ of $\{x^k\}$, there exist constants $\varepsilon > 0$ and $\mubar > 0$ with $\mu_k \leq \mubar$ for all $k \in \N_0$ with $x^k \in \ball_{\varepsilon}(x^*)$. 
\end{lemma}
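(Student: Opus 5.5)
The plan is to argue by contradiction. Suppose that for an accumulation point $x^*$ no such $\varepsilon,\mubar$ exist. Then there is an index set $\mathcal L'$ with $x^k\to_{\mathcal L'} x^*$ and $\mu_k\to_{\mathcal L'}\infty$. Since $\mu_k\le\mumax$ after every successful iteration, for large $k\in\mathcal L'$ the previous iteration $k-1$ must have been unsuccessful. In that case $x^{k-1}=x^k$ and $\mu_{k-1}=\mu_k/\sigma_2$. Set $\mathcal L\coloneqq\{k-1\mid k\in\mathcal L'\}$. This gives an infinite set $\mathcal L\subset\mathcal U\cup(\N_0\setminus\mathcal K)$ with $x^k\to_{\mathcal L}x^*$ and $\mu_k\to_{\mathcal L}\infty$. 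By \cref{lemma:q_hat}, applied with $\{B_k\}$ bounded by \cref{ass:Bk_bounded}, we get $k\in\mathcal K$ for large $k\in\mathcal L$, so eventually $\mathcal L\subset\mathcal U$, and moreover $d^k\to_{\mathcal L}0$. \cref{lemma:mu_pred_2} then also gives $\mu_k\|d^k\|\to_{\mathcal L}0$, although the main estimate below does not strictly need this.

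The key step is to show that for large $k\in\mathcal L$ the acceptance test holds, which contradicts $k\in\mathcal U$. Since $x^*\in\dom\varphi$ (because $F(x^k)\le F(x^0)$ and $F$ is lsc) and $x^k,\hat x^k\to x^*$, \cref{ass:loc_lipschitz} gives constants $L,\varepsilon$ with both points in $\ball_\varepsilon(x^*)$ eventually. The descent lemma on the segment then yields
\[
f(\hat x^k)\le f(x^k)+\nabla f(x^k)^\top d^k+\tfrac L2\|d^k\|^2 .
\]
Hence
\[
\ared_k-\pred_k = f(\hat x^k)-f(x^k)-\nabla f(x^k)^\top d^k-\tfrac12 (d^k)^\top B_kd^k \ge -\tfrac{L+M_B}{2}\|d^k\|^2 .
\]
Combining this with $\pred_k\ge\frac{\mu_k}{2}\|d^k\|^2$ from \cref{lemma:pred}, we obtain $\ared_k\ge \bigl(1-\frac{L+M_B}{\mu_k}\bigr)\pred_k\ge c_1\pred_k$ as soon as $\mu_k\ge (L+M_B)/(1-c_1)$. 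The case $d^k=0$ is harmless: then $\pred_k=\ared_k=0$ and the test is trivially satisfied. This contradicts $\mathcal L\subset\mathcal U$.

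The main obstacle is ensuring that $\hat x^k$ lies in the Lipschitz neighbourhood, i.e.\ that $d^k\to0$ along the sequence. This is exactly what \cref{lemma:q_hat} provides, using boundedness of $\{x^k\}_{\mathcal L}$ (convergent) and of $B_k$. A secondary technical point is the bookkeeping needed to pass from the original sequence to the preceding unsuccessful iterations: the regularization parameter is only large right after unsuccessful steps, and those steps share the same iterate. Finally, to obtain a uniform $\varepsilon$, one argues by contradiction over shrinking balls $\ball_{1/j}(x^*)$: choose $k_j$ with $x^{k_j}\in\ball_{1/j}(x^*)$ and $\mu_{k_j}>j$. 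This produces the sequence $\mathcal L'$ used above.
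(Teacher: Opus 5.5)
Your proof is correct and follows essentially the same route as the paper's: contradiction along a subsequence with $x^k\to x^*$ and $\mu_k\to\infty$, shifting to the preceding unsuccessful iterations, invoking \cref{lemma:q_hat} to get $d^k\to 0$, and then combining the local Lipschitz bound with $\pred_k\ge\frac{\mu_k}{2}\|d^k\|^2$ to force $\ared_k\ge c_1\pred_k$; you also make explicit that $x^*\in\dom\varphi$, which the paper uses without comment. One small sign slip: the right-hand side of your displayed identity equals $\pred_k-\ared_k$, whereas $\ared_k-\pred_k=f(x^k)+\nabla f(x^k)^\top d^k+\frac12 (d^k)^\top B_k d^k-f(\hat x^k)$, and it is to this expression that your one-sided descent-lemma bound applies, giving exactly the lower bound $-\frac{L+M_B}{2}\|d^k\|^2$ you state.
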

\begin{proof}
    Consider an accumulation point $x^*$ of $\{x^k\}$.
    Assume, by contradiction, that for all $\varepsilon > 0$ and $\mubar > 0$ there exists $k \in \N_0$ with $x^k \in \ball_{\varepsilon}(x^*)$ and $\mu_k > \mubar$.
    Then, owing to \cref{ass:loc_lipschitz}, there exists a local Lipschitz constant $L^*$ of $\nabla f$ in $\ball_{\varepsilon}(x^*)$.
    Moreover, there exists a subset $\mathcal L \subset \N_0$ with $\{x^k\}_{\mathcal L} \to x^*$ and $\{\mu_k\}_{\mathcal L} \to \infty$.
    For all $k \in \mathcal L$ with $\mu_k > \mumax$, $k-1$ was unsuccessful with $\mu_{k-1} = \sigma_2^{-1} \mu_k$ and $x^{k-1} = x^k$ by \cref{algo_line_K_commands_2} of \cref{alg:RPQN}.
    This shows that we can further assume, without loss of generality, that $\mathcal L \subset \mathcal U$.
    \cref{lemma:q_hat} together with  \cref{ass:Bk_bounded} assures that we can assume, without loss of generality, that $\mathcal L \subset \mathcal K$.
    Furthermore, \cref{lemma:q_hat} also implies $\lim_{k \in \mathcal L} \Vert d^k \Vert = 0$. 
    By Taylor's theorem there exists $\xi^k$ on the line segment between $x^k$ and $\hat x^k$ such that $f(\hat x^k) = f(x^k) + \nabla f(\xi^k)^\top d^k$.
    Consequently, for $k \in \mathcal L$ sufficiently large it holds that
    \begin{align*}
        \pred_k - \ared_k
        ={}&
        f(\hat x^k) - f(x^k) - \innerprod{\nabla f(x^k)}{d^k} - \frac{1}{2} \innerprod{d^k}{B_k d^k}
        \\
        ={}&
        \innerprod{\nabla f(\xi^k) - \nabla f(x^k)}{d^k} - \frac{1}{2} \innerprod{d^k}{B_k d^k}
        \\
        \leq{}&
        \Vert \nabla f(\xi^k) - \nabla f(x^k) \Vert \Vert d^k \Vert + \frac{1}{2} \Vert B_k \Vert \Vert d^k \Vert^2
        \leq{}
        \left( L^* + \frac{1}{2} \Vert B_k \Vert \right) \Vert d^k \Vert^2,
    \end{align*}
    where we used the Cauchy-Schwarz inequality in the first, and the Lipschitz constant $L^*$ in the second inequality.
    Hence, together with \cref{lemma:pred} this implies
    \begin{equation*}
        \ared_k - c_1 \pred_k
        =
        (1-c_1) \pred_k - (\pred_k - \ared_k)
        \geq
        \frac{1}{2} \big( (1-c_1)\mu_k - 2L^* - \Vert B_k \Vert \big) \Vert d^k \Vert^2.
    \end{equation*}
    By \cref{ass:Bk_bounded} this shows that, for $k \in \mathcal L$ sufficiently large, it holds that $\rho_k \geq c_1$, which is a contradiction to $k \in \mathcal U$.
\end{proof}

Recall that $\varphi$, and therefore $F$, is only lower semicontinuous and need not be continuous.
Hence, convergence of a subsequence $\{x^k\}$ does not automatically imply convergence of the associated function values $\{F(x^k)\}$ to $F(x^*)$.
Even so, the next result establishes a stronger conclusion, made possible by the specific structure of the proximal-type iteration.

\begin{lemma}\label{Prop:Fxk-converges}
    Suppose that \cref{assumption_general,ass:Bk_bounded,ass:loc_lipschitz} hold. Let $x^*$ be an accumulation point of $\{x^k\}$. Then the entire sequence $ \{ F(x^k) \} $ converges to $ F(x^*)$.
\end{lemma}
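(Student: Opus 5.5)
Since $\{F(x^k)\}$ is monotonically decreasing by \cref{lemma:basic_properties}\ref{lemma:basic_F} and bounded below by \cref{assumption_general}\ref{ass_inf_F}, it converges to some $F^*\in\R$. It therefore suffices to show $F^*=F(x^*)$. Take a subsequence $\{x^k\}_{\mathcal L}\to x^*$. Lower semicontinuity gives $F(x^*)\le\liminf_{k\in\mathcal L}F(x^k)=F^*$, so the work is in proving the reverse inequality $F^*\le F(x^*)$.

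The first step is to reduce to successful iterations whose successors approach $x^*$. If only finitely many iterations are successful, then $x^k$ is eventually constant and equal to $x^*$, and there is nothing to show. Otherwise, for each $k\in\mathcal L$ let $j(k)\le k-1$ be the last successful index before $k$. Then $x^k=x^{j(k)+1}=\hat x^{j(k)}$, and both $x^{j(k)}$ and $\hat x^{j(k)}$ belong to $\dom\varphi$. We also have $\|d^{j(k)}\|\to0$ by \cref{theorem:global_convergence_cvx}. Hence $x^{j(k)}\to x^*$ as well, and the new index set $\mathcal J\coloneqq\{j(k)\}$ consists of successful iterations with $x^j\to x^*$ and $\hat x^j\to x^*$.

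The main step is to use the minimizing property of $\hat x^j$ for $\hat q_j$, comparing it against the test point $x^*$:
\[
\varphi(\hat x^j)+\nabla f(x^j)^\top d^j+\tfrac12 (d^j)^\top G_j d^j \le \varphi(x^*)+\nabla f(x^j)^\top(x^*-x^j)+\tfrac12 (x^*-x^j)^\top G_j(x^*-x^j).
\]
By \cref{lemma:mu_bounded}, for $j\in\mathcal J$ large we have $x^j\in\ball_\varepsilon(x^*)$, so $\mu_j\le\mubar$. Together with \cref{ass:Bk_bounded} this gives $\|G_j\|\le M_B+\mubar$. Since $\nabla f$ is continuous, $\nabla f(x^j)$ stays bounded. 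Because $d^j\to0$ and $x^*-x^j\to0$, every term other than the two $\varphi$ values tends to zero, which yields $\limsup_{j\in\mathcal J}\varphi(\hat x^j)\le\varphi(x^*)$. Continuity of $f$ then gives $\limsup_{j\in\mathcal J}F(\hat x^j)\le F(x^*)$. But $F(\hat x^j)=F(x^{j+1})\to F^*$, so $F^*\le F(x^*)$, which completes the argument.

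The expected obstacle is the bookkeeping of the subsequences. The reduction from $\mathcal L$ to successful indices must hold with $\hat x^j\to x^*$, and the bound on $\mu_j$ from \cref{lemma:mu_bounded} has to apply to these indices. It does, because $x^{j}\to x^*$ places $x^j$ in the required ball for large $j$. This uniform bound on $\mu_j$, and with it on $G_j$, is the decisive ingredient. Without it, the quadratic term $\tfrac12(x^*-x^j)^\top G_j(x^*-x^j)$ might not vanish.
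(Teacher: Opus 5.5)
Your proof is correct and follows the paper's argument. One inequality comes from monotone convergence of $F(x^k)$ plus lower semicontinuity; the other comes from testing the minimality of $\hat x^j$ for $\hat q_j$ at $x^*$ along successful indices, with $\|G_j\|\le M_B+\mubar$ supplied by \cref{lemma:mu_bounded} and \cref{ass:Bk_bounded}, and your explicit handling of the finite-success case, the last-successful-index reduction (with $j(k)\to\infty$), and the use of $\limsup$ makes rigorous the paper's ``without loss of generality $\mathcal L\subset\mathcal S$'' step and its tacit assumption that $\lim\varphi(x^{k+1})$ exists.
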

\begin{proof}
    By \cref{lemma:basic_properties}\ref{lemma:basic_F}, the sequence $ \{ F(x^k) \} $ is monotonically decreasing and, by \cref{assumption_general}, bounded from below.
    Hence, $ \{ F(x^k) \} $ is convergent to some finite number $ F_* > - \infty $.
    There exists a subset $\mathcal L \subset \N_0$ with $\{x^k\}_{\mathcal L} \to x^*$.
    Without loss of generality we can assume that $\mathcal L \subset \mathcal S$.
    Due to the lower semicontinuity of $ F $, we get 
    \begin{equation*}
        F_* = \lim_{k \to \infty} F(x^k) = \liminf_{k \in \mathcal L} F(x^k) \geq F(x^*).
    \end{equation*}
    Thus it suffices to show that 
    \begin{equation}\label{Eq:Fstarlowerbound}
        F_* \leq F(x^*).
    \end{equation}
    In order to verify \eqref{Eq:Fstarlowerbound}, recall that, for each $ k \in \mathcal L$, $x^{k+1} = \hat x^k$ solves the subproblem~\eqref{eq_subproblem}.
    In particular, we therefore obtain
    \begin{align}\label{eq:conv-ineqauality}
    \begin{split}
        \hat q_k (x^{k+1}) &= f(x^k) + \nabla f(x^k)^\top (x^{k+1}-x^k) + 
        \frac{1}{2} (x^{k+1}-x^k)^\top G_k (x^{k+1}-x^k) +
        \varphi (x^{k+1}) \\
        &\leq f(x^k) + \nabla f(x^k)^\top (x^* - x^k) +
        \frac{1}{2} (x^* - x^k)^\top G_k (x^* - x^k) + \varphi (x^*).
    \end{split}
    \end{align}
    From \cref{theorem:global_convergence_cvx} we know that $\{d^k\}_{\mathcal L} \to 0$.
    Taking into account that $ d^k = x^{k+1} - x^k $ for $k \in \mathcal S$, this implies $ \{ x^{k+1} \}_{\mathcal L} \to x^* $.
    Furthermore, \cref{lemma:mu_bounded} together with the boundedness of $ \{ B_k \} $ then yields the boundedness of the subsequence $ \{ G_k \}_{k \in \mathcal L} $. 
    Now, taking the limit in \eqref{eq:conv-ineqauality}, the previous considerations yield
    \begin{align*}
        F_*
        ={}&
        \lim_{k \to \infty} F(x^k)
        =
        \lim_{k \in \mathcal L} F(x^{k+1})
        ={}
        \lim_{k \in \mathcal L} \big[ f(x^{k+1}) + 
        \varphi (x^{k+1}) \big]
        =
        \lim_{k \in \mathcal L} \big[ f(x^{k}) + 	\varphi (x^{k+1}) \big]
        \\
        ={}&
        \lim_{k \in \mathcal L} \Big[
        f(x^k) + \nabla f(x^k)^\top (x^{k+1}-x^k) + 
        \frac{1}{2} (x^{k+1}-x^k)^\top G_k (x^{k+1}-x^k) +
        \varphi (x^{k+1}) \Big]
        \\
        \leq{}&
        \lim_{k \in \mathcal L} \Big[
        f(x^k) + \nabla f(x^k)^\top (x^* - x^k) +
        \frac{1}{2} (x^* - x^k)^\top G_k (x^* - x^k) + \varphi (x^*) \Big]
        \\
        ={}&
        f(x^*) + \varphi (x^*)
        =
        F(x^*),
    \end{align*}
    where the inequality is due to \eqref{eq:conv-ineqauality}.
    Altogether, this verifies \eqref{Eq:Fstarlowerbound} and completes the proof.
\end{proof}

We have another technical lemma before turning to \cref{theorem:global_convergence_1}.

\begin{lemma}
	\label{lemma:acc_points}
	Suppose that \cref{assumption_general,ass:Bk_bounded} hold.
	Then for every accumulation point $x^*$ of $\{x^k\}$ there exists a subset $\mathcal L \subset \mathcal K$ with $\{x^k\}_{\mathcal L} \to x^*$ and $\lim_{k \in \mathcal L} \mu_k \Vert d^k \Vert = 0$. 
\end{lemma}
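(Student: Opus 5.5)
Let $x^*$ be an accumulation point of $\{x^k\}$ and fix an index set $\mathcal L_0\subset\N_0$ with $\{x^k\}_{\mathcal L_0}\to x^*$. The plan is to split into two cases according to whether $\{\mu_k\}_{\mathcal L_0}$ stays bounded or not, and in each case to produce a subset $\mathcal L\subset\mathcal K$ with $\{x^k\}_{\mathcal L}\to x^*$ and $\mu_k\Vert d^k\Vert\to 0$ along $\mathcal L$. The two cases mirror the proof of \cref{theorem:mu_pred_3}, but everything is localized along a subsequence converging to $x^*$; no Lipschitz assumption is needed.

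\emph{Case 1: $\{\mu_k\}_{\mathcal L_0}$ is unbounded.} We pass to a subsequence with $\mu_k\to\infty$. Since $\mu_k>\mumax$ for large $k$ in this subsequence, the update rule at \cref{algo_line_K_commands_2} forces iteration $k-1$ to be unsuccessful. Hence $x^{k-1}=x^k$, and $\mu_{k-1}=\sigma_2^{-1}\mu_k\to\infty$. We set $\mathcal L\coloneqq\{k-1\}\subset\mathcal U\subset\mathcal K$. Along $\mathcal L$ the points $x^k$ still converge to $x^*$, so they are bounded, and $\mu_k\to\infty$. Then \cref{lemma:mu_pred_2}, which uses \cref{ass:Bk_bounded}, gives $\mu_k\Vert d^k\Vert\to0$ on $\mathcal L$.

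\emph{Case 2: $\mu_k\le\bar\mu$ on $\mathcal L_0$, possibly after passing to a subsequence.} Here the natural choice is the successful iterations near $x^*$. If infinitely many $k\in\mathcal L_0$ lie in $\mathcal S$, we take these as $\mathcal L$. Then \cref{theorem:global_convergence_cvx} gives $\Vert d^k\Vert\to0$ on $\mathcal S$, and so $\mu_k\Vert d^k\Vert\le\bar\mu\Vert d^k\Vert\to0$.

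Otherwise, $k\in\mathcal L_0$ is eventually not successful. Then $x^{k+1}=x^k$ and $\mu_{k+1}=\sigma_2\mu_k$, and we walk forward from each such $k$. Let $j(k)\ge k$ be the first successful index after $k$. If no such index exists for some $k$, then only finitely many iterations are successful. In that situation $x^j=x^*$ from then on, $\mu_j\to\infty$, and the index set $\{j\ge k\}\cap\mathcal K$ falls under Case 1 (equivalently \cref{lemma:mu_pred_2}); indeed $\mathcal K$ is infinite by \cref{lemma:q_hat}. If $j(k)$ always exists, then $x^{j(k)}=x^k\to x^*$ and $j(k)\in\mathcal S$. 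Along this set \cref{theorem:global_convergence_cvx} gives $d^{j(k)}\to0$.

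To control $\mu_{j(k)}$ we split again. If $\{\mu_{j(k)}\}$ is bounded, we are done. If it is unbounded, then on a subsequence $\mu_{j(k)}\to\infty$ while $x^{j(k)}\to x^*$, and we are back in Case 1 applied to the indices $j(k)$. That case yields unsuccessful indices $j(k)-1$ with $x^{j(k)-1}\to x^*$ and $\mu\Vert d\Vert\to0$.

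\textbf{Main obstacle.} The delicate point is Case 2 with $\mathcal L_0$ eventually unsuccessful, since there one must make sure that some index where the step is actually computed is found near $x^*$. The fix is the observation that unsuccessful iterations leave $x^k$ frozen. Moving forward to the next successful index therefore never leaves the accumulation point, and any blow-up of $\mu$ along the way is absorbed by \cref{lemma:mu_pred_2}. Finally, all selected indices lie in $\mathcal K$: successful and unsuccessful indices belong to $\mathcal K$ by definition, and in Case 1 \cref{lemma:q_hat} ensures $\mathcal L\subset\mathcal K$ eventually.
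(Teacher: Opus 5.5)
Your proof is correct and follows essentially the same route as the paper. Where $\mu$ stays bounded you pass to nearby successful indices and apply \cref{theorem:global_convergence_cvx}; where $\mu$ blows up you shift to the preceding unsuccessful index $k-1$ and invoke \cref{lemma:q_hat,lemma:mu_pred_2}, and the finitely-many-successes case is handled by those same two lemmas. The only real difference is that you walk forward to the first successful index $j(k)\ge k$, which gives $x^{j(k)}=x^k$ exactly; the paper instead walks back to the last successful index before $k$, and so needs $\lim_{k\in\mathcal S}\Vert d^k\Vert=0$ to conclude that those iterates still converge to $x^*$.
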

\begin{proof}
	If \cref{alg:RPQN} performs only finitely many successful iterations, then the result follows immediately from \cref{lemma:q_hat,lemma:mu_pred_2}.
	Now suppose that $\mathcal S$ is an infinite set. Since $x^*$ is an accumulation point of $\{x^k\}$, there exists $\mathcal L \subset \N_0$ with $\{x^k\}_{\mathcal L} \to x^*$.
	We construct the set $\mathcal L' \subset \mathcal S$ by replacing unsuccessful iterations $k \in \mathcal L$ with the last successful iteration before $k$.
	As $\mathcal S$ is an infinite set, the same holds for $\mathcal L'$. From \cref{theorem:global_convergence_cvx} it follows that $\{x^k\}_{\mathcal L'} \to x^*$.
	If $\{\mu_k\}_{\mathcal L'}$ is bounded, the result follows immediately with \cref{theorem:global_convergence_cvx}. If $\{\mu_k\}_{\mathcal L'}$ is unbounded, passing to a subsequence if necessary, we can assume that $k-1 \in \mathcal U$ for all $k \in \mathcal L'$, as for $k-1 \in \mathcal S$ it would be $\mu_k \leq \mumax$ by \cref{algo_line_K_commands_2}.
	Now consider the set $\mathcal U' \coloneqq \{k-1 \, \vert \, k \in \mathcal L'\}$.
	Clearly, for $\mathcal U'$ it also holds that $\{x^k\}_{\mathcal U'} \to x^*$ and $\{\mu_k\}_{\mathcal U'} \to \infty$.
	By \cref{lemma:q_hat}, again passing to a subsequence if necessary, we have $\mathcal U' \subset \mathcal U$.
	Then the result follows from \cref{lemma:mu_pred_2}.
\end{proof}

\begin{theorem}
\label{theorem:global_convergence_1}
    Suppose that \cref{assumption_general,ass:Bk_bounded,ass:loc_lipschitz} hold.
    Then all accumulation points of $\{x^k\}$ are stationary points of \eqref{problem}. 
\end{theorem}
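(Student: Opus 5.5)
Let $x^*$ be an accumulation point of $\{x^k\}$. The plan is to pick a subsequence along which the optimality condition of the subproblem gives an element of $\partial F(\hat x^k)$ that tends to zero, and then use the robustness (outer semicontinuity) of the limiting subdifferential. This requires $F$-attentive convergence of $\hat x^k\to x^*$, which \cref{Prop:Fxk-converges} will supply.

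First I invoke \cref{lemma:acc_points} to obtain $\mathcal L\subset\mathcal K$ with $\{x^k\}_{\mathcal L}\to x^*$ and $\mu_k\Vert d^k\Vert\to 0$ on $\mathcal L$. Since $\mu_k\geq\mumin$ by \cref{lemma:basic_properties}\ref{lemma:basic_mu}, this also gives $d^k\to 0$, hence $\hat x^k\to x^*$ along $\mathcal L$. As in the proof of \cref{thm:liminf_stationarity}, the optimality condition $0\in\partial\hat q_k(\hat x^k)$ yields
\[
v^k \coloneqq \nabla f(\hat x^k)-\nabla f(x^k)-G_k d^k\in\partial F(\hat x^k).
\]
Its norm is bounded by $\Vert\nabla f(\hat x^k)-\nabla f(x^k)\Vert+M_B\Vert d^k\Vert+\mu_k\Vert d^k\Vert$. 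The first term tends to zero by continuity of $\nabla f$, since both points converge to $x^*$; local Lipschitz continuity is not even needed here. The other two terms vanish by \cref{ass:Bk_bounded} and the choice of $\mathcal L$. Hence $v^k\to0$ along $\mathcal L$.

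The main obstacle is showing $F(\hat x^k)\to F(x^*)$ along $\mathcal L$, which is needed for outer semicontinuity because $\varphi$ is only lower semicontinuous. Lower semicontinuity gives $\liminf F(\hat x^k)\geq F(x^*)$ immediately. For the reverse inequality I repeat the argument of \cref{Prop:Fxk-converges}. Minimality of $\hat x^k$ for $\hat q_k$, compared with the test point $x^*$, gives
\[
f(x^k)+\nabla f(x^k)^\top d^k+\tfrac12 (d^k)^\top G_k d^k+\varphi(\hat x^k)\leq f(x^k)+\nabla f(x^k)^\top(x^*-x^k)+\tfrac12 (x^*-x^k)^\top G_k(x^*-x^k)+\varphi(x^*).
\]
The quadratic terms need care because $\mu_k$ may be unbounded along $\mathcal L$. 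On the left-hand side, $(d^k)^\top G_k d^k\geq -M_B\Vert d^k\Vert^2$, which tends to zero. On the right-hand side, the term $\mu_k\Vert x^*-x^k\Vert^2$ is controlled by \cref{lemma:mu_bounded}: for $k\in\mathcal L$ large we have $x^k\in\ball_\varepsilon(x^*)$, so $\mu_k\leq\mubar$. Therefore $\{G_k\}_{\mathcal L}$ is bounded, the right-hand side tends to $F(x^*)$, and we get $\limsup\varphi(\hat x^k)\leq\varphi(x^*)$. Continuity of $f$ then gives $F(\hat x^k)\to F(x^*)$. This also shows $x^*\in\dom\varphi$, since the iterates stay in $\dom\varphi$ with bounded $F$-values.

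Finally, applying outer semicontinuity of $\partial F$ to $\hat x^k\to x^*$ with $F(\hat x^k)\to F(x^*)$ and $v^k\in\partial F(\hat x^k)$, $v^k\to0$, gives $0\in\partial F(x^*)$. So $x^*$ is stationary.
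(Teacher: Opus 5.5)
Your proof is correct. It shares the paper's skeleton: \cref{lemma:acc_points} supplies $\mathcal L$, you establish attentive convergence $\hat x^k\to x^*$ along $\mathcal L$, and you pass to the limit in the subproblem's optimality condition via outer semicontinuity. You use $\partial F$ with $F$-attentive convergence, while the paper uses $\partial\varphi$ with $\varphi$-attentive convergence; the two are equivalent here.

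The genuine difference is how you obtain $\limsup_{k\in\mathcal L}\varphi(\hat x^k)\le\varphi(x^*)$. The paper compares with the test point $x^k$, i.e.\ it uses $\hat q_k(\hat x^k)\le\hat q_k(x^k)=F(x^k)$. In that comparison the regularization term $\frac{\mu_k}{2}\Vert d^k\Vert^2$ has a favorable sign and is simply dropped. The price is that one needs $\varphi(x^k)\to\varphi(x^*)$, which the paper takes from \cref{Prop:Fxk-converges}. You compare with the test point $x^*$ instead. This forces you to control $\mu_k\Vert x^*-x^k\Vert^2$, so you invoke \cref{lemma:mu_bounded} directly. In effect you rerun the proof of \cref{Prop:Fxk-converges} along $\mathcal L$ for $\hat x^k$, rather than along successful iterations for $x^{k+1}$.

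Your version is self-contained and never needs convergence of the whole sequence $\{F(x^k)\}$; the paper's version reuses an already proven lemma. Both ultimately rest on \cref{lemma:mu_bounded}, and hence on \cref{ass:loc_lipschitz}. One small inconsistency: your opening sentence says \cref{Prop:Fxk-converges} will supply the attentive convergence, but your argument never actually uses it.
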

\begin{proof}
   From \cref{lemma:acc_points} we know that there exists a subset $\mathcal L \subset \mathcal K$ with $\{x^k\}_{\mathcal L} \to x^*$ and $\lim_{k \in \mathcal L} \mu_k \Vert d^k \Vert = \lim_{k \in \mathcal L} \Vert d^k \Vert = 0$.
   From \cref{Prop:Fxk-converges} we know that $\{F(x^k)\} \to F(x^*)$ and we get
   \begin{equation*}
   		\lim_{k \to \infty} \varphi(x^k)
   		=
   		\lim_{k \to \infty} F(x^k) - f(x^k)
   		=
   		F(x^*) - f(x^*)
   		=
   		\varphi (x^*).
   \end{equation*}
   Optimality of $\hat x^k$ with respect to \eqref{eq_subproblem} combined with the Cauchy-Schwarz inequality gives
    \begin{align*}
    F(x^k) &= \hat q_k(x^k) \geq \hat q_k(\hat x^k) = f(x^k) + \nabla f(x^k)^\top d^k + \frac{1}{2} (d^k)^\top G_k d^k + \varphi(\hat x^k) \\
    &\geq f(x^{k}) - \Vert \nabla f(x^{k}) \Vert \Vert d^k \Vert + \frac{\mu_k - \Vert B_{k} \Vert}{2} \Vert d^k \Vert^2 + \varphi(\hat x^k),
    \end{align*}
    hence
    \[
    \varphi(x^{k}) \geq \varphi(\hat x^k) - \Vert \nabla f(x^{k}) \Vert \Vert d^k \Vert + \frac{\mu_k - \Vert B_{k} \Vert}{2} \Vert d^k \Vert^2
    \]
    for all $k \in \mathcal L$. Taking the $\limsup$ along $\mathcal L$ gives
    \[
    \liminf_{k \in \mathcal L} \varphi(\hat x^k) \geq \varphi(x^*) \geq \limsup_{k \in \mathcal L} \varphi(\hat x^k),
    \]
    where the first inequality is due to lower semicontinuity of $\varphi$ and $\{\hat x^k\}_{\mathcal L} \to x^*$.
    Therefore, $\lim_{k \in \mathcal L} \varphi(\hat x^k) = \varphi(x^*)$.
    Finally, since $\hat x^k \in \argmin \hat q_k$, the optimality condition results in
    \[
    0 \in \partial \hat q_k (\hat x^k) = \nabla f(x^k) + (B_k+\mu_k I) d^k + \partial \varphi(\hat x^k)
    \]
    for all $k \in \mathcal L$.
    Using the aforementioned limits along with the outer semicontinuity (with respect to $\varphi$-attentive convergence) of the limiting subdifferential, taking the limit along $\mathcal L$ gives
    \[
    0 \in \nabla f(x^*) + \partial\varphi(x^*) = \partial F(x^*),
    \]
    concluding the proof.
\end{proof}

\section{Convergence under the KL property}\label{Sec:Convergence}

The first aim of this section is to prove convergence
of the entire sequence $\{x^k\}$ generated by \cref{alg:RPQN} under the following (fairly mild)
assumptions (in addition to \cref{assumption_general,ass:Bk_bounded,ass:loc_lipschitz}).
\begin{assumption}\label{assumption_loc_prox_cvx}
    Consider problem \eqref{problem} and \cref{alg:RPQN}.
    \begin{enumerate}[label=(\alph*)]
        \item The set $\solutionset$ of stationary points of \eqref{problem} is nonempty and there exists an accumulation point $x^* \in \solutionset$ of $\{x^k\}_{\mathcal S}$. \label{ass_existence_cvx}
        \item $F$ has the KL property at $x^*$ with desingularization function $\chi$, constant $\eta > 0$ and neighborhood $U$ of $x^*$, see \cref{def_KL}. \label{ass_KL}
    \end{enumerate}
\end{assumption}
The KL property is next to the error bound condition nowadays the most famous property to show convergence and rate-of-convergence results for proximal methods.
In recent years, a lot of research has been done on the relation of those two concepts.
A major milestone has been \cite{bolte_2016}, where for convex lsc functions the equivalence between the KL property of exponent $1/2$ and an error bound is established.
In recent years, the KL property has become the default assumption in modern convergence analysis.
One of the reasons for that is that many of the objective functions in practical applications are KL with known exponents.

In the following, we let $L^* > 0$ be the local Lipschitz constant of $\nabla f$ in $\ball_{\varepsilon_0}(x^*)$ for some radius $\varepsilon_0 > 0$.
Then, by \cref{lemma:mu_bounded}, there exists $\mubar > 0$ and $0 < \varepsilon_1 < \varepsilon_0$ such that $\mu_k \leq \mubar$ for all $k \in \N_0$ with $x^k \in \ball_{\varepsilon_1}(x^*)$.

\begin{lemma}\label{lemma_alpha}
    Suppose that \cref{assumption_general,ass:Bk_bounded,ass:loc_lipschitz,assumption_loc_prox_cvx} hold.
    Define $\beta \coloneqq \frac{c_1 \mumin}{2(M_B+\mubar+L^*)}$ and
    \[
    \alpha_{k} \coloneqq \Vert x^{k} - x^* \Vert + \sqrt{\frac{8}{c_1\mumin}\left( F(x^{k})-F(x^*)\right)} + \frac{1}{\beta} \chi \left( F(x^{k}) - F(x^*) \right).
    \] 
    It holds that $\liminf\limits_{k \in \mathcal S} \alpha_k = 0$.
\end{lemma}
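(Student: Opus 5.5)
We have to show that each of the three terms in $\alpha_k$ tends to zero along a suitable subsequence of $\mathcal S$. By \cref{assumption_loc_prox_cvx}\ref{ass_existence_cvx} there is an index set $\mathcal L \subset \mathcal S$ with $\{x^k\}_{\mathcal L} \to x^*$. Along this set the first term $\Vert x^k - x^*\Vert$ vanishes immediately, so all the work lies in the function-value terms.

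For those terms we invoke \cref{Prop:Fxk-converges}, which applies under \cref{assumption_general,ass:Bk_bounded,ass:loc_lipschitz}. Since $x^*$ is an accumulation point of $\{x^k\}$, the entire sequence $\{F(x^k)\}$ converges to $F(x^*)$. By \cref{lemma:basic_properties}\ref{lemma:basic_F} the sequence $\{F(x^k)\}$ is monotonically decreasing, so $F(x^k) \geq F(x^*)$ for all $k$. Hence both the square root and the argument of $\chi$ are well defined and nonnegative, and $F(x^k)-F(x^*) \to 0$.

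Continuity of $\sqrt{\cdot}$ at $0$ then sends the second term to zero. For the third term we use that $\chi$ is continuous on $[0,\eta]$ with $\chi(0)=0$, by \cref{def_KL} and \cref{assumption_loc_prox_cvx}\ref{ass_KL}. For $k$ large, $F(x^k)-F(x^*) \in [0,\eta]$, so $\chi(F(x^k)-F(x^*)) \to 0$. The constant $\beta$ is positive and finite because $\mubar$, $M_B$ and $L^*$ are finite constants fixed before the lemma. Consequently $\lim_{k\in\mathcal L}\alpha_k = 0$, and since $\alpha_k \ge 0$ this gives $\liminf_{k\in\mathcal S}\alpha_k = 0$.

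There is no real obstacle here. The only delicate points are ensuring that $\chi$ is evaluated inside its domain $[0,\eta]$, which needs $k$ large together with monotonicity, and that the convergence of function values comes from \cref{Prop:Fxk-converges} rather than from mere lower semicontinuity of $F$.
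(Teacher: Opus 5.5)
Your proof is correct and follows the same route as the paper: you take a subsequence of $\{x^k\}_{\mathcal S}$ converging to $x^*$, which the assumption provides, and then use \cref{Prop:Fxk-converges} together with continuity of $\sqrt{\cdot}$ and of $\chi$ at $0$ to send all three summands to zero. You also spell out two details the paper leaves implicit: that $F(x^k)-F(x^*)\ge 0$ by monotonicity, and that the argument of $\chi$ eventually lies in $[0,\eta]$.
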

\begin{proof}
    According to \cref{assumption_loc_prox_cvx}\ref{ass_existence_cvx} there exists a subsequence of $\{x^k\}_{\mathcal S}$ converging to $x^*$.
    Along such a subsequence, the first summand tends to $0$ by convergence of $x^k$ to $x^*$;  the second tends to $0$ by \cref{Prop:Fxk-converges}; and the third tends to $0$ because the desingularization function $\chi$ is continuous at the origin.
    Hence $\alpha_k \to 0$ along that subsequence.
\end{proof}

\begin{lemma}\label{lemma_dist}
    Suppose that \cref{assumption_general,ass:Bk_bounded,ass:loc_lipschitz,assumption_loc_prox_cvx} hold.
    For every iteration $k \in \mathcal S$ with $x^k \in \ball_{\varepsilon_1}(x^*)$ and $\Vert d^k \Vert \leq \varepsilon_0 - \varepsilon_1$ it holds that
    \[
    \dist\left(0,\partial F(x^{k+1})\right) \leq \left( M_B + \mubar + L^* \right) \Vert x^{k+1}-x^k \Vert.
    \]
\end{lemma}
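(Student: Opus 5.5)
The argument follows the pattern already used in the proof of \cref{thm:liminf_stationarity}: write down the first-order optimality condition of the subproblem at the accepted point $x^{k+1}=\hat x^k$ and read off an explicit element of $\partial F(x^{k+1})$. Since $k\in\mathcal S\subset\mathcal K$, $\hat x^k$ minimizes $\hat q_k$, so
\[
0\in\partial\hat q_k(\hat x^k)=\nabla f(x^k)+G_k d^k+\partial\varphi(\hat x^k).
\]
Rearranging gives
\[
\nabla f(x^{k+1})-\nabla f(x^k)-G_k d^k\in\nabla f(x^{k+1})+\partial\varphi(x^{k+1})=\partial F(x^{k+1}).
\]
Here we use the sum rule from the preliminaries, which applies because $x^{k+1}\in\dom\varphi$; this holds since $\hat q_k(\hat x^k)\le\hat q_k(x^k)=F(x^k)<\infty$.

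Hence
\[
\dist(0,\partial F(x^{k+1}))\le\|\nabla f(x^{k+1})-\nabla f(x^k)\|+\|G_k\|\,\|d^k\|,
\]
and we bound the two terms separately.

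\textbf{Second term.} By \cref{ass:Bk_bounded} we have $\|B_k\|\le M_B$. Because $x^k\in\ball_{\varepsilon_1}(x^*)$, the choice of $\varepsilon_1$ via \cref{lemma:mu_bounded} gives $\mu_k\le\mubar$. Therefore $\|G_k\|\le\|B_k\|+\mu_k\le M_B+\mubar$.

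\textbf{First term.} Both points must lie in the ball where $L^*$ is valid. We have $x^k\in\ball_{\varepsilon_1}(x^*)\subset\ball_{\varepsilon_0}(x^*)$. For the new iterate,
\[
\|x^{k+1}-x^*\|\le\|x^k-x^*\|+\|d^k\|\le\varepsilon_1+(\varepsilon_0-\varepsilon_1)=\varepsilon_0 .
\]
So both points lie in $\ball_{\varepsilon_0}(x^*)$, and the Lipschitz estimate yields
\[
\|\nabla f(x^{k+1})-\nabla f(x^k)\|\le L^*\|x^{k+1}-x^k\|.
\]

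Adding the two bounds and using $d^k=x^{k+1}-x^k$ for $k\in\mathcal S$ gives the claimed inequality. The only delicate point is the membership of $x^{k+1}$ in $\ball_{\varepsilon_0}(x^*)$, which the hypothesis $\|d^k\|\le\varepsilon_0-\varepsilon_1$ is designed to guarantee via the triangle inequality. One also needs $\mu_k\le\mubar$ at iteration $k$ itself, not at $k+1$; this is fine, since $G_k$ uses $\mu_k$ and the hypothesis is stated for $x^k$.
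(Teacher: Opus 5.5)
Your proof is correct and follows the paper's argument essentially step for step. Like the paper, you use the subproblem optimality condition at $x^{k+1}=\hat x^k$, the sum rule to exhibit an element of $\partial F(x^{k+1})$, the triangle inequality to place $x^{k+1}$ in $\ball_{\varepsilon_0}(x^*)$, and the bounds $\|G_k\|\le M_B+\mubar$ and $L^*$; your check that $x^{k+1}\in\dom\varphi$, which justifies the sum rule, is a detail the paper leaves implicit.
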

\begin{proof}
    Since $k\in\mathcal S$, $x^{k+1} = \hat x^k$ solves \eqref{eq_subproblem}, whose stationarity condition gives
    \[
    0 \in \nabla f(x^k) + G_k (x^{k+1} - x^k) + \partial \varphi (x^{k+1}).
    \]
    Hence
    \begin{equation}\label{eq_Bk_in_subdiff_F}
        -G_k (x^{k+1} - x^k) + \nabla f(x^{k+1}) - \nabla f(x^k) \in 
        \nabla f(x^{k+1}) + \partial \varphi (x^{k+1}) = \partial F(x^{k+1}),
    \end{equation}
    where we used the sum rule for the limiting subdifferential.
    Then, by $\Vert d^k \Vert \leq \varepsilon_0 - \varepsilon_1$, it holds that $x^{k+1} \in \ball_{\varepsilon_0}(x^*)$.
    \cref{ass:Bk_bounded,lemma:mu_bounded} guarantee that $\|G_k\| \leq M_B+\mubar$.
    Then, using \eqref{eq_Bk_in_subdiff_F} together with \cref{ass:loc_lipschitz}, we obtain
    \begin{align*}
        \dist\left(0,\partial F(x^{k+1})\right)
        \leq{}&
        \Vert -G_k(x^{k+1}-x^k) + \nabla f(x^{k+1}) - \nabla f(x^k) \Vert
        \\
        \leq{}&
        (M_B+\mubar) \Vert x^{k+1}-x^k \Vert + L^* \Vert x^{k+1} - x^k \Vert
        ={}
        (M_B+\mubar + L^*) \Vert x^{k+1}-x^k \Vert, 
    \end{align*}
    which completes the proof.
\end{proof}

The following result guarantees convergence of the entire sequence $ \{ x^k \} $ under \cref{assumption_general,ass:Bk_bounded,ass:loc_lipschitz,assumption_loc_prox_cvx}.
Note that, in general, these assumptions do not necessarily imply the local uniqueness of a solution.
A key departure from the proof of \cite[Thm 4.5]{jia_kanzow_mehlitz_2023} is that our induction runs only over iterations in $\mathcal S$ from $k_0$ (as defined in the proof) onward, rather than over all $k \geq k_0$.

\begin{theorem}\label{theorem_final}
    Suppose that \cref{assumption_general,ass:Bk_bounded,ass:loc_lipschitz,assumption_loc_prox_cvx} hold. 
    Then the entire sequence $\{x^k\}$ converges to $x^*$.
\end{theorem}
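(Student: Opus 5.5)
The argument is the standard KL induction of Attouch--Bolte type, run only over the successful iterations. Between consecutive successful iterations the iterate does not move, so we may reindex. Write $\mathcal S = \{s_0 < s_1 < \dots\}$; since $x^*$ is an accumulation point of $\{x^k\}_{\mathcal S}$, the set $\mathcal S$ is infinite. Then $x^{s_j+1} = x^{s_{j+1}}$. Define $\delta_k \coloneqq F(x^k) - F(x^*) \geq 0$, which is nonnegative by monotonicity and \cref{Prop:Fxk-converges}. If $\delta_k = 0$ for some $k$, then $F(x^{k'}) = F(x^*)$ for all later $k'$. In that case every successful step has $\pred_k = 0$, so $d^k = 0$, and the sequence is eventually constant; it then equals $x^*$ because $x^*$ is an accumulation point. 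Hence I assume $\delta_k > 0$ for all $k$.

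The central estimate is as follows. Take $k \in \mathcal S$ with $x^k \in \ball_{\varepsilon_1}(x^*)$, $\|d^k\| \leq \varepsilon_0 - \varepsilon_1$, $x^{k+1} \in U$ and $\delta_{k+1} < \eta$. Combine \cref{lemma_dist} with the KL inequality at $x^{k+1}$:
\[
\chi'(\delta_{k+1}) \geq \frac{1}{(M_B+\mubar+L^*)\|x^{k+1}-x^k\|}.
\]
Here $x^{k+1}$ equals the next successful iterate $x^{k'}$, $k' \in \mathcal S$. Concavity of $\chi$ and sufficient decrease (\cref{lemma:basic_properties}) give
\[
\chi(\delta_{k+1}) - \chi(\delta_{k'+1}) \geq \chi'(\delta_{k+1})(\delta_{k+1}-\delta_{k'+1}) \geq \chi'(\delta_{k+1})\,\frac{c_1\mumin}{2}\|x^{k'+1}-x^{k'}\|^2.
\]
Together these yield
\[
\|x^{k'+1}-x^{k'}\|^2 \leq \frac{1}{\beta}\bigl(\chi(\delta_{k+1})-\chi(\delta_{k'+1})\bigr)\|x^{k+1}-x^k\|.
\]
The AM--GM inequality then turns this into
\[
2\|x^{k'+1}-x^{k'}\| \leq \|x^{k+1}-x^k\| + \frac{1}{\beta}\bigl(\chi(\delta_{k+1})-\chi(\delta_{k'+1})\bigr).
\]
This telescopes. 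Summing over consecutive successful indices from a start index $k_0$ bounds the path length by
\[
\|x^{k_0+1}-x^{k_0}\| + \frac{1}{\beta}\chi(\delta_{k_0+1}).
\]
The first step is itself at most $\sqrt{\tfrac{2}{c_1\mumin}\,\delta_{k_0}}$, by the sufficient decrease and \cref{lemma:pred}.

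The induction is the main obstacle, and \cref{lemma_alpha} is designed for it. Choose $\rho>0$ with $\ball_\rho(x^*) \subset U$ and $\rho \leq \min\{\varepsilon_1, \varepsilon_0 - \varepsilon_1\}$. By \cref{lemma_alpha} and $\delta_k \to 0$, pick $k_0 \in \mathcal S$ with $\alpha_{k_0} < \rho$ and $\delta_{k_0} < \eta$. I then prove by induction over successful indices $k \geq k_0$ that $x^k \in \ball_\rho(x^*)$, and that the partial path length plus $\|x^{k_0}-x^*\|$ stays below $\alpha_{k_0}$.

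The step sizes are controlled by $\|d^k\| \leq \sqrt{2\delta_k/(c_1\mumin)} \leq \sqrt{2\delta_{k_0}/(c_1\mumin)}$. This stays below $\varepsilon_0-\varepsilon_1$ and keeps $x^{k+1}$ in $\ball_\rho(x^*)$. The constant $\sqrt{8/(c_1\mumin)}$ in $\alpha_k$ is exactly what absorbs the factor $2$ from AM--GM together with the first step. The delicate bookkeeping is to verify the hypotheses of \cref{lemma_dist} and of KL at each newly generated point before using them, which is why the induction runs only over $\mathcal S$.

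Once the induction closes, $\sum_{k}\|x^{k+1}-x^k\| < \infty$, where only successful steps contribute. Hence $\{x^k\}$ is Cauchy and converges, and its limit must be the accumulation point $x^*$.
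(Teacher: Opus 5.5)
Your proposal is correct and follows essentially the same route as the paper: the KL induction runs only over successful iterations, with the same $\alpha_{k_0}$ and $\beta$, the same first-step bound $\sqrt{2\delta_{k_0}/(c_1\mumin)}$, and the same AM--GM telescoping that produces the constant $\sqrt{8/(c_1\mumin)}$. The differences are cosmetic: you guarantee $\|d^k\|\le\varepsilon_0-\varepsilon_1$ via sufficient decrease rather than via \cref{theorem:global_convergence_cvx}, and you finish by finite length (Cauchy) rather than by letting $k_0\to\infty$ along $\mathcal S'$ so that $\alpha_{k_0}\to 0$; note also that new iterates stay in $\ball_\rho(x^*)$ because of the path-length bound in your induction hypothesis, not because of the step-size bound alone.
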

\begin{proof}
    The sequence $\{F(x^k)\}$ is monotonically decreasing (\cref{lemma:basic_properties}\ref{lemma:basic_F}) and converges to $F(x^*)$ by \cref{Prop:Fxk-converges}; in particular, $F(x^k) \geq F(x^*)$ for all $k \in \N_0$.
    Now suppose $F(x^k) = F(x^*)$ for some $k \in \N_0$. By monotonicity, also $F(x^{k+1}) = F(x^*)$.
    First assume that iteration $k$ is successful.
    Then
    \begin{equation}\label{eq_1_final_theorem}
        0 = F(x^k)-F(x^{k+1})
        =
        \ared_k
        \geq
        c_1 \pred_k \geq \frac{c_1 \mumin}{2} 
        \Vert x^{k+1} - x^k \Vert^2 \geq 0,
    \end{equation}
    where the second inequality uses \cref{lemma:basic_properties}\ref{lemma:basic_pred}. 
    Hence $x^{k+1}=x^k$, and this holds trivially if iteration $k$ is unsuccessful, i.e., the sequence $\{x^k\}$ is eventually constant.
    Since, by \cref{assumption_loc_prox_cvx}\ref{ass_existence_cvx}, a subsequence of $\{x^k\}_{\mathcal S}$ converges to $x^*$, it follows that $x^k=x^*$ for all sufficiently large $k \in \N_0$.
    
    For the remainder of the proof, assume $F(x^k) > F(x^*)$ for all $k \in \N_0$.
    According to \cref{lemma_alpha}, there exists $\mathcal S' \subset \mathcal S$ with $\{\alpha_k\}_{\mathcal S'} \to 0$.
    Together with \cref{theorem:global_convergence_cvx} and \cref{Prop:Fxk-converges}, we can choose $k_0 \in \mathcal S'$ sufficiently large such that
    \begin{equation}\label{eq_F}
        \alpha_{k_0} \in (0,\varepsilon_1), \quad \ball_{\alpha_{k_0}}(x^*) \subset U, \quad F(x^{k_0}) < F(x^*)+\eta,
    \end{equation}
    and $\Vert d^{k} \Vert \leq \varepsilon_0-\varepsilon_1$ for all successful iterations $k \geq k_0$, where $ \eta $ denotes the constant from the desingularization function $\chi \colon [0,\eta] \to [0,\infty)$ associated with the KL property at $x^*$.
    Since $\chi (0) = 0$ and $\chi'(t) > 0$ for all $t \in (0,\eta)$,
    $\chi \left(F(x^k) - F(x^*) \right) \geq 0$ for all $k \geq k_0$.
    Denote by $s_0,s_1,s_2,\ldots$ the successful iterations, starting with $s_0 \coloneqq k_0$.
    As iterates are not updated during unsuccessful iterations,
    \begin{equation}\label{eq_xxs}
        x^{s_{j+1}} = x^{s_j+1} \quad \text{for all } j \in \N_0.
    \end{equation}
    We claim that for all $j \in \N_0$:
    \begin{enumerate}[label=(\alph*)]
        \item $x^{s_j} \in \ball_{\alpha_{k_0}} (x^*)$, \label{eq_induction_a}
        \item $\Vert x^{k_0} - x^* \Vert + \sum\limits_{i=0}^{j} \Vert
        x^{s_{i+1}}-x^{s_i} \Vert \leq \alpha_{k_0}$, which is equivalent to
        \begin{equation}\label{eq_induction_b_1}
            \sum\limits_{i=0}^{j} \Vert x^{s_{i+1}}-x^{s_i} \Vert 
            \leq
            \sqrt{\frac{8}{c_1\mumin}\big( F(x^{k_0})-F(x^*) \big)} + \frac{1}{\beta}\chi \left( F(x^{k_0})-F(x^*) \right).
        \end{equation} \label{eq_induction_b}
    \end{enumerate}
    We prove \ref{eq_induction_a}--\ref{eq_induction_b} by induction.
    For $j=0$, \ref{eq_induction_a} holds by the definition of $\alpha_{k_0}$.
    Moreover, \eqref{eq_1_final_theorem} together with the monotonicity of $\{F(x^k)\}$ and \eqref{eq_xxs} yields
    \begin{equation}\label{eq_k0}
        \Vert x^{s_1} - x^{s_0} \Vert
        =
        \Vert x^{k_0+1} - x^{k_0} \Vert 
        \leq
        \sqrt{\frac{2}{c_1\mumin}\big( F(x^{k_0})-F(x^{k_0+1})\big)} 
        \leq
        \sqrt{\frac{2}{c_1\mumin}\big( F(x^{k_0})-F(x^*)\big)}.
    \end{equation}
    Hence \ref{eq_induction_b} also holds for $j=0$.
    Assume \ref{eq_induction_a}--\ref{eq_induction_b} hold for $i=0,\dots,j$ with some $j \ge 0$.
    By the triangle inequality and the induction hypothesis,
    \begin{equation}\label{eq_a}
        \Vert x^{s_{j+1}}-x^* \Vert \leq \Vert x^{k_0}-x^* \Vert +
        \sum_{i=0}^{j} \Vert x^{s_{i+1}}-x^{s_i} \Vert \leq \alpha_{k_0},
    \end{equation}
    so \ref{eq_induction_a} holds with $j+1$ in place of $j$.
    For \ref{eq_induction_b}, first note that \eqref{eq_F} implies
    $F(x^*) < F(x^k) < F(x^*)+\eta$
    for all $k \geq k_0$.
    Since $F$ has the KL property at $x^*$ and $x^{s_i} \in \ball_{\alpha_{k_0}} (x^*) \subset U$ for all $i \in \{0,\dots,j+1\}$ by the induction hypothesis and \eqref{eq_a}, we have
    \begin{equation}\label{eq_KL}
        \chi' \big( F(x^{s_i})-F(x^*) \big) 
        \dist \big( 0,\partial F(x^{s_i}) \big) \geq 1 
    \end{equation}
    for all $i = 0, \hdots, j+1$.
    \cref{lemma_dist} and \eqref{eq_xxs} give
    \begin{multline*}
        \dist \big( 0,\partial F(x^{s_{i+1}}) \big) 
        =
        \dist \big( 0,\partial F(x^{s_i+1}) \big) 
        \leq
        (M_B+\mubar+L^*) \Vert x^{s_i+1}-x^{s_i} \Vert
        \\
        =
        (M_B+\mubar+L^*) \Vert x^{s_{i+1}}-x^{s_i} \Vert
    \end{multline*}
    for all $i = 0,\dots,j+1$.
    In view of \eqref{eq_KL}, this yields
    \begin{equation}\label{eq_chi_ineq}
        \chi' \big( F(x^{s_{i}})-F(x^*) \big)
        \geq 
        \frac{1}{\dist \big( 0,\partial F(x^{s_{i}}) \big)}
        \geq
        \frac{1}{(M_B+\mubar+L^*) \Vert x^{s_{i}}-x^{s_{i-1}} \Vert}
    \end{equation}
    for all $i=1,\dots,j+2$.
    For convenience, set
    \begin{equation*}
        \Delta_{i,l}
        \coloneqq
        \chi \big( F(x^{s_i})-F(x^*) \big) - \chi \big( F(x^{s_l})-F(x^*) \big)
    \end{equation*}
    for $i,l = 0,\dots,j+2$.
    Then, by concavity of $\chi$, we obtain
    \begin{multline*}
        \Delta_{i,i+1}
        \geq
        \chi' \big( F(x^{s_i})-F(x^*) \big) \big( F(x^{s_i})-F(x^{s_{i+1}}) \big) 
        \overset{\eqref{eq_chi_ineq}}{\geq}
        \frac{F(x^{s_i})-F(x^{s_i+1})}{(M_B+\mubar+L^*)\Vert x^{s_i}-x^{s_{i-1}} \Vert }
        \\
        \overset{\eqref{eq_1_final_theorem}}{\geq}
        \frac{c_1 \mumin \Vert x^{s_i+1}-x^{s_i} \Vert^2}{2(M_B+\mubar+L^*) \Vert x^{s_i}-x^{s_{i-1}} \Vert }
        \overset{\eqref{eq_xxs}}{=}
        \beta \frac{\Vert x^{s_{i+1}}-x^{s_i} \Vert^2}{\Vert x^{s_i}-x^{s_{i-1}} \Vert}
    \end{multline*} 
    for all $i \in \{1,\dots,j+1\}$, where $\beta$ is the constant from \cref{lemma_alpha}.
    Since $a+b \geq 2 \sqrt{ab}$ for all $a,b \geq 0$, it follows that
    \begin{equation*}
        \frac{1}{\beta} \Delta_{i,i+1} + \Vert x^{s_i}-x^{s_{i-1}} \Vert 
        \geq
        2 \sqrt{\frac{1}{\beta}\Delta_{i,i+1} \Vert x^{s_i}-x^{s_{i-1}} \Vert } \geq 2 \Vert x^{s_{i+1}}-x^{s_i} \Vert
    \end{equation*}
    for all $i \in \{1,\dots,j+1\}$.
    Summation gives
    \begin{multline*}
        2 \sum_{i=1}^{j+1} \Vert x^{s_{i+1}}-x^{s_i} \Vert 
        \leq
        \sum_{i=1}^{j+1} \Vert x^{s_i} - x^{s_{i-1}} \Vert  + \frac{1}{\beta} \sum_{i=1}^{j+1} \Delta_{i,i+1}
        =
        \sum_{i=0}^j \Vert x^{s_{i+1}}-x^{s_i} \Vert + \frac{1}{\beta} \Delta_{1,j+2}
        \\
        \leq
        \sum_{i=1}^{j+1} \Vert x^{s_{i+1}}-x^{s_i} \Vert + \Vert x^{s_1} - x^{s_0} \Vert + \frac{1}{\beta} \Delta_{1,j+2}.
    \end{multline*}
    Subtracting the first term on the right, using \eqref{eq_k0}, and the nonnegativity and monotonicity of $\chi$, we obtain
    \begin{equation*}
        \sum_{i=1}^{j+1} \Vert x^{s_{i+1}}-x^{s_i} \Vert
        \leq
        \sqrt{\frac{2}{c_1\mumin} \big( F(x^{k_0})-F(x^*) \big)} + \frac{1}{\beta} \chi \big( F(x^{k_0})-F(x^*) \big).
    \end{equation*}
    Adding $\Vert x^{s_1}-x^{s_0} \Vert$ to both sides and using \eqref{eq_k0} again yields
    \begin{equation*}		
        \sum_{i=0}^{j+1} \Vert x^{s_{i+1}}-x^{s_i} \Vert
        \leq
        \sqrt{\frac{8}{c_1\mumin} \big( F(x^{k_0})-F(x^*) \big)} + \frac{1}{\beta} \chi \big( F(x^{k_0})-F(x^*) \big).
    \end{equation*}
    Thus \ref{eq_induction_b} holds with $j+1$ in place of $j$, completing the induction.
    In particular, \ref{eq_induction_a} implies $x^k \in \ball_{\alpha_{k_0}}(x^*)$ for all successful $k \ge k_0$.
    Since iterates are not updated during unsuccessful iterations, this in fact holds for all $k \ge k_0$.
    Letting $\{k_0\}_{\mathcal S'} \to \infty$, we get convergence of the entire sequence $\{x^k\}$ to $x^*$.
\end{proof}

We now turn from convergence itself to quantitative rates, assuming a KL desingularization of the form $\chi(t)=c\,t^{1-\theta}$.
The analysis naturally applies to the successful iterations, because those are the indices where objective decrease is explicitly controlled (and because the iterates remain constant at unsuccessful iterations).
A direct extension to the full sequence is generally unavailable: unsuccessful iterations may still occur at arbitrarily large indices and interrupt any global Q-type rate statement for all indices.
The first statement of the next theorem resolves this mechanism by showing that such interruptions can only happen in uniformly bounded blocks.

\begin{theorem}\label{thm:kl-successful-recursion}
    Suppose that \cref{assumption_general,ass:Bk_bounded,ass:loc_lipschitz,assumption_loc_prox_cvx} hold and that $F$ has the KL property at $x^*$ with desingularization function $\chi(t)=c\,t^{1-\theta}$ for some $c>0$ and $\theta\in(0,1)$.
    Let $s_0,s_1,s_2,\dots$ denote the successful iterations, with $s_0\coloneqq k_0$.
    Then the sequence $\{s_{j+1}-s_j\}$ is bounded and the following statements hold:
    \begin{enumerate}[(a)]
        \item If $\theta\in(0,\tfrac12)$, then $\{F(x^{s_k})\}$ converges
        Q-superlinearly to $F(x^*)$ with rate $\frac{1}{2\theta}$, and
        $\{x^{s_k}\}$ converges R-superlinearly to $x^*$ with the same rate.
        \item If $\theta=\tfrac12$, then $\{F(x^{s_k})\}$ converges Q-linearly
        to $F(x^*)$, and $\{x^{s_k}\}$ converges R-linearly to $x^*$.
        \item If $\theta\in(\tfrac12,1)$, then there exist constants
        $C_1,C_2>0$ such that for all sufficiently large $k$,
        \begin{align*}
            F(x^{s_k}) - F(x^*) \leq{}& C_1\, k^{-\frac{1}{2\theta-1}},
            &
            \|x^{s_k}-x^*\|     \leq{}& C_2\, k^{-\frac{1-\theta}{2\theta-1}}.
        \end{align*}
    \end{enumerate}
\end{theorem}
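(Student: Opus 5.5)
The argument has three parts: bound the gaps between successful iterations, derive a KL recursion for $a_j \coloneqq F(x^{s_j})-F(x^*)$ and apply \cref{lemma_technical_rates}, and transfer the rates to the iterates through the finite-length estimate of \cref{theorem_final}.

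\textbf{Bounded gaps.} By \cref{theorem_final} we have $x^k\to x^*$. Hence, for all large $k$, $x^k\in\ball_{\varepsilon_1}(x^*)$, and \cref{lemma:mu_bounded} gives $\mu_k\le\mubar$. Each unsuccessful iteration multiplies $\mu_k$ by $\sigma_2$, and right after a successful iteration $\mu_{s_j+1}\ge\mumin$. So a run of $m$ consecutive unsuccessful iterations starting at $s_j+1$ gives $\mu\ge\sigma_2^{m}\mumin$, and this must stay $\le\mubar$. Therefore $s_{j+1}-s_j\le 1+\log(\mubar/\mumin)/\log\sigma_2$, after enlarging $k_0$ if necessary.

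\textbf{Recursion for $a_j$.} I combine the KL inequality at $x^{s_{j+1}}$ with \cref{lemma_dist} and \eqref{eq_xxs}. With $\chi'(t)=c(1-\theta)t^{-\theta}$ this gives
\[
1\le c(1-\theta)\,a_{j+1}^{-\theta}\,(M_B+\mubar+L^*)\,\|x^{s_{j+1}}-x^{s_j}\|.
\]
The sufficient decrease \eqref{eq_1_final_theorem} gives $\|x^{s_{j+1}}-x^{s_j}\|^2\le \frac{2}{c_1\mumin}(a_j-a_{j+1})$. Together these yield
\[
a_{j+1}^{2\theta}\le C\,(a_j-a_{j+1}).
\]
\cref{lemma_technical_rates} is stated with $a_j$ on the left, so I need to pass from the index $j+1$ to $j$. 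I expect this index shift to be the main technical point. I would split on whether $a_{j+1}\ge a_j/2$ or not. In the first case, $a_j^{2\theta}\le 2^{2\theta}a_{j+1}^{2\theta}$ and the recursion holds with $a_j$ up to a constant. In the second case, $a_j-a_{j+1}\ge a_j/2\ge a_j^{2\theta}/2$ once $a_j\le1$ and $2\theta\le1$; for $2\theta>1$ I use the standard convention that $a_j^{2\theta}\le a_j$ for small $a_j$. In both cases I obtain $a_j^{\alpha}\le\beta'(a_j-a_{j+1})$ with $\alpha=2\theta$.

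Applying \cref{lemma_technical_rates}:
\begin{itemize}
\item For $\theta=\tfrac12$, $a_j$ converges Q-linearly.
\item For $\theta>\tfrac12$, $a_j\le Cj^{-1/(2\theta-1)}$.
\item For $\theta<\tfrac12$ the lemma only yields linear convergence, so I argue directly from $a_{j+1}^{2\theta}\le Ca_j$. This gives $a_{j+1}\le C'a_j^{1/(2\theta)}$, which is Q-superlinear convergence of order $\frac1{2\theta}>1$.
\end{itemize}

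\textbf{Iterates.} The induction in \cref{theorem_final}, restarted at $s_j$, gives the tail-length estimate
\[
\|x^{s_j}-x^*\|\le\sum_{i\ge j}\|x^{s_{i+1}}-x^{s_i}\|\le \sqrt{\tfrac{8}{c_1\mumin}a_j}+\tfrac{c}{\beta}a_j^{1-\theta}.
\]
For $\theta\le\frac12$ the term $\sqrt{a_j}$ dominates, since $1-\theta\ge\tfrac12$ and $a_j\le 1$. This gives R-linear convergence for $\theta=\frac12$, and R-superlinear convergence with the same order for $\theta<\frac12$, because $\sqrt{a_j}$ inherits the order. For $\theta>\frac12$ the term $a_j^{1-\theta}$ dominates, so the bound is $C_2k^{-(1-\theta)/(2\theta-1)}$.
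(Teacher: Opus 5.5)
Your proof is correct and follows the paper's route essentially step by step. The common steps are: bounded gaps from the local bound $\mu_k\le\mubar$ of \cref{lemma:mu_bounded} against the $\sigma_2$-growth; the recursion $a_{j+1}^{2\theta}\le C(a_j-a_{j+1})$ from the KL inequality, \cref{lemma_dist} and sufficient decrease; a direct superlinear argument for $\theta<\tfrac12$; \cref{lemma_technical_rates} for $\theta\ge\tfrac12$; and the restarted finite-length bound for the iterates. Your explicit index shift from $j+1$ to $j$ fills a step the paper skips when it calls the recursion ``precisely the setting'' of that lemma.

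One slip: in your second case, the bound $a_j\ge a_j^{2\theta}$ for $a_j\le1$ requires $2\theta\ge1$; it reverses for $2\theta<1$. Since you only invoke the lemma when $\theta\ge\tfrac12$, this does not affect the argument.
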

\begin{proof}
    Suppose there exists an index set $\mathcal J \subset \N_0$ with $\{s_{j+1}-s_j\}_{\mathcal J} \to +\infty$.
    Then $\mu_{s_{j+1}} = \sigma_2^{s_{j+1}-s_j} \mu_{s_j} \geq \sigma_2^{s_{j+1}-s_j} \mumin \to_{\mathcal J} +\infty$, a contradiction to \cref{theorem_final} and \cref{lemma:mu_bounded}.
    Hence, there exists $\overline s \in \N$ with $s_{j+1}-s_j \leq \overline s$ for all $j \in \N_0$.
    This means that the number of consecutive unsuccessful iterations after $k_0$ is bounded.
    
    We now show convergence rates for the subsequence of successful iterations.
    Together with \cref{theorem:global_convergence_cvx,Prop:Fxk-converges}, \cref{theorem_final} guarantees that there exists $j_0\in\N_0$ such that all successful iterates $k\ge s_{j_0}$ satisfy
    \[
    x^k \in \ball_{\varepsilon_1}(x^*) \cap U \cap
    \{x\in\R^n \mid F(x^*)<F(x)<F(x^*)+\eta\}
    \quad \text{and} \quad
    \Vert d^k \Vert \leq \varepsilon_0 - \varepsilon_1.
    \]
    Throughout, we use relation \eqref{eq_xxs}, which holds since unsuccessful iterations do not change the iterates.
    For every $j\in\N_0$, since $s_j$ is successful, \cref{lemma:basic_properties}\ref{lemma:basic_pred} yields
    \begin{multline}
        \label{eq_rate_1_re}
            F(x^{s_{j}})-F(x^{s_{j+1}})
            =
            F(x^{s_{j}})-F(x^{s_j+1})
            =
            \ared_{s_j} \geq c_1 \pred_{s_j} 
            \geq
            \frac{c_1 \mumin}{2} \,\Vert d^{s_j} \Vert^2
            \\
            =
            \frac{c_1 \mumin}{2}\,\|x^{s_j+1}-x^{s_j}\|^2
            =
            \frac{c_1 \mumin}{2}\,\|x^{s_{j+1}}-x^{s_j}\|^2.
    \end{multline}
    \cref{lemma_dist} gives, for all $j\ge j_0$,
    \begin{multline} 
        \label{eq_rate_2_re}
            \dist\bigl(0,\partial F(x^{s_{j+1}})\bigr)
            =
            \dist\bigl(0,\partial F(x^{s_j+1})\bigr)
            \leq
            (M_B + \mubar + L^*)\, \Vert x^{s_j+1} - x^{s_j} \Vert
            \\
            =
            (M_B + \mubar + L^*)\, \Vert x^{s_{j+1}} - x^{s_j} \Vert.
    \end{multline}
    Since $F$ has the KL property at $x^*$ with
    $\chi(t)=c\,t^{1-\theta}$, we have for all $j\ge j_0$,
    \begin{multline}\label{eq_rate_3_re}
            1
            \leq
            \chi^\prime\left( F(x^{s_{j+1}})-F(x^*) \right)
            \dist\left( 0, \partial F(x^{s_{j+1}}) \right)
            \\
            =
            c(1-\theta)\bigl(F(x^{s_{j+1}})-F(x^*)\bigr)^{-\theta}\,
            \dist\bigl(0,\partial F(x^{s_{j+1}})\bigr).
    \end{multline}
    Equivalently,
    \[
    \dist(0,\partial F(x^{s_{j+1}}))
    \;\ge\; \frac{1}{c(1-\theta)}
    \bigl(F(x^{s_{j+1}})-F(x^*)\bigr)^{\theta}.
    \]
    Combining this with \eqref{eq_rate_2_re} yields
    \begin{equation}\label{eq_rate_4_re}
        \|x^{s_{j+1}}-x^{s_j}\|
        \geq
        \frac{(F(x^{s_{j+1}})-F(x^*))^\theta}{c(1-\theta)(M_B+\mubar+L^*)}.
    \end{equation}
    By defining
    \[
    \tau \coloneqq \left(
    \frac{2c^2(1-\theta)^2(M_B+\mubar+L^*)^2}{c_1 \mumin}
    \right)^{\frac{1}{2\theta}}
    \]
    and using \eqref{eq_rate_4_re} and \eqref{eq_rate_1_re}, we obtain
    \begin{align}
            F(x^{s_{j+1}}) - F(x^*)
            \leq{}&
            \big( c (1-\theta) (M_B+\mubar + L^*) \big)^{\frac{1}{\theta}} \Vert x^{s_{j+1}} - x^{s_j} \Vert^{\frac{1}{\theta}}
            \nonumber\\
            \leq{}&
            \big( c (1-\theta) (M_B+\mubar + L^*) \big)^{\frac{1}{\theta}} \left( \frac{2}{c_1 \mumin}\right)^{\frac{1}{2\theta}} \big( F(x^{s_{j}})-F(x^{s_{j+1}}) \big)^{\frac{1}{2\theta}} 
            \nonumber\\
            ={}&
            \tau \big( F(x^{s_{j}})-F(x^{s_{j+1}}) \big)^{\frac{1}{2\theta}} \leq \tau \big( F(x^{s_{j}})-F(x^{*}) \big)^{\frac{1}{2\theta}}. \label{eq_rate_5_re}
    \end{align} 
    This implies Q-superlinear convergence of $\{F(x^{s_j})\}$ when
    $\theta\in(0,\tfrac12)$.
    Then, by denoting
    $a_j \coloneqq F(x^{s_j}) - F(x^*)$,
    \eqref{eq_rate_5_re} gives, for $j\ge j_0$,
    \[
    a_{j+1}^{2\theta}
    \leq
    \tau^{2\theta}(a_j - a_{j+1}),
    \]
    which is precisely the setting of \cref{lemma_technical_rates}.
    Hence, for $\theta = \frac{1}{2}$, $\{F(x^{s_j})\}$ converges Q-linearly to $\{F(x^*)\}$ and for $\theta \in \left( \frac{1}{2}, 1 \right)$ we get the inequality
    \[
        F(x^{s_j}) - F(x^*) \leq C_1 j^{-\frac{1}{2\theta-1}}
    \]
    for some $C_1 > 0$ and $j$ sufficiently large. 
    
    We now verify the statements for the sequence $\{x^{s_j}\}$.
    Because $\{x^k\}$ converges to $x^*$, the statements from the induction in the proof of \cref{theorem_final} remain valid when $k_0$ is replaced by any sufficiently large successful iteration $s_k$.
    In particular, \eqref{eq_induction_b_1} implies
    \[
    \sum_{j=k}^l \|x^{s_{j+1}}-x^{s_j}\|
    \leq
    \sqrt{\frac{8}{c_1 \mumin}\bigl(F(x^{s_k})-F(x^*)\bigr)} + \frac{1}{\beta}\chi\bigl(F(x^{s_k})-F(x^*)\bigr)
    \]
    for all $l\ge k$.
    Hence,
    \begin{align*}
        \Vert x^{s_l} - x^{s_k} \Vert
        \leq{}&
        \sum_{j=k}^{l-1} \Vert x^{s_{j+1}} - x^{s_j} \Vert \leq \sqrt{\frac{8}{c_1\mumin}\big( F(x^{s_k})-F(x^*) \big)} +      \frac{1}{\beta}\chi \big( F(x^{s_k})-F(x^*) \big)
        \\
        ={}&
        \sqrt{\frac{8}{c_1\mumin}\big( F(x^{s_k})-F(x^*) \big)} + \frac{c}{\beta} \big( F(x^{s_k})-F(x^*) \big)^{1-\theta}
        \\
        \leq{}&
        \left( \sqrt{\frac{8}{c_1 \mumin}} + \frac{c}{\beta} \right) \big( F(x^{s_k})-F(x^*) \big)^{\min\{1/2, 1-\theta\}}.
    \end{align*}
    Letting $l \to \infty$, the convergence rates for $\{x^{s_k}\}$ follow by substituting the
    corresponding rates already established for $\{F(x^{s_k})\}$ in the cases
    $\theta\in(0,\tfrac12)$, $\theta=\tfrac12$, and $\theta\in(\tfrac12,1)$.
\end{proof}

\section{Limited-memory Kleinmichel matrices}
\label{sec:kleinmichel}

This section is devoted to a lesser-known quasi-Newton method, the \textit{Kleinmichel formula}. This method is a rank-one method that, unlike the famous SR1 formula, guarantees positive definiteness under the same circumstances as the BFGS formula. The Kleinmichel formula is defined by
\begin{align}
    \label{eq_kleinmichel}
    H_{k+1} &= \gamma_k \left[ H_k + \frac{\left( y^k - \gamma_k H_k d^k \right)\left(y^k-\gamma_k H_k d^k \right)^\top}{\gamma_k \innerprod{ y^k - \gamma_k H_k d^k }{d^k}} \right],
\end{align}
where $\gamma_k > 0$ is a free parameter and $y^k \coloneqq \nabla f(x^{k+1})-\nabla f(x^k)$.
Note that $\gamma_k = 1$ yields exactly the SR1-formula.
For the sake of completeness, we recount this important property, with a simple proof, from the original \cite[Satz 1]{kleinmichel_1981_1} (in German).
\begin{proposition}
    \label{theorem_km_pd}
    Suppose that $H_k$ is positive definite, $\innerprod{y^k}{d^k} > 0$ and $\gamma_k \in \left( 0, \frac{\innerprod{y^k}{d^k}}{\innerprod{d^k}{H_k d^k}}\right)$.
    Then $H_{k+1}$ as defined by the Kleinmichel formula \eqref{eq_kleinmichel} is positive definite.
\end{proposition}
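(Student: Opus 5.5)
Set $d \coloneqq d^k$, $y \coloneqq y^k$, $H \coloneqq H_k$, $\gamma \coloneqq \gamma_k$ and $r \coloneqq y - \gamma H d$. We have $H_{k+1} = \gamma H + \frac{1}{\innerprod{r}{d}} r r^\top$, and we only need to show $z^\top H_{k+1} z > 0$ for every $z \neq 0$.

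First we check that the formula is well defined and that the rank-one term is positive semidefinite. Since $\innerprod{r}{d} = \innerprod{y}{d} - \gamma\innerprod{d}{Hd}$ and $\gamma < \innerprod{y}{d}/\innerprod{d}{Hd}$ (with $\innerprod{d}{Hd}>0$, because $H \succ 0$ and $d \neq 0$, the latter following from $\innerprod{y}{d}>0$), we get $\innerprod{r}{d} > 0$. Hence the denominator $\gamma\innerprod{r}{d}$ is positive, and $\frac{1}{\innerprod{r}{d}} r r^\top \succeq 0$.

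Then $H_{k+1}$ is the sum of the positive definite matrix $\gamma H$ (as $\gamma>0$ and $H \succ 0$) and a positive semidefinite matrix. Explicitly, for $z\neq 0$,
\[
z^\top H_{k+1} z = \gamma\, z^\top H z + \frac{\innerprod{r}{z}^2}{\innerprod{r}{d}} \geq \gamma\, z^\top H z > 0 .
\]
Symmetry of $H_{k+1}$ is clear from the formula, so $H_{k+1}$ is positive definite.

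No step here is a genuine obstacle. The only point that needs care is the sign of the denominator $\innerprod{r}{d}$, and that is exactly what the upper bound on $\gamma_k$ is for: it guarantees the rank-one correction is a positive multiple of $rr^\top$. Everything else is immediate.
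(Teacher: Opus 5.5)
Your proof is correct and takes the same approach as the paper. The bound on $\gamma_k$ gives $\langle y^k - \gamma_k H_k d^k, d^k\rangle > 0$, so $H_{k+1}$ is $\gamma_k H_k \succ 0$ plus a positive semidefinite rank-one term. You also spell out two points the paper leaves implicit: the quadratic-form inequality, and that $d^k \neq 0$.
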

\begin{proof}
    For $\gamma_k > 0$ and positive definite $H_k$, it is clear that $H_{k+1}$ is positive definite if $\innerprod{y^k - \gamma_k H_k d^k}{d^k} > 0$ holds.
    However, from $\gamma_k < \frac{\innerprod{y^k}{d^k}}{\innerprod{d^k}{H_k d^k}}$, it immediately follows that
    \begin{align*}
    	\innerprod{y^k - \gamma_k H_k d^k}{d^k}
        =
        \innerprod{y^k}{d^k} - \gamma_k \innerprod{d^k}{H_k d^k}
        >
        0,
    \end{align*}
    which completes the proof.
\end{proof}
So, even though the Kleinmichel update is a rank-one update, it preserves positive definiteness as long as $\innerprod{y^k}{d^k} > 0$ holds, just like the BFGS update.
The initial matrix $H_0$ is usually chosen to have a simple structure (a positive multiple of the identity matrix, for instance).

In large-scale applications it is not practical to store and update a dense matrix $H_k$ explicitly.
Instead, one usually employs a limited-memory strategy: at iteration $k$ the matrix $H_{k+1}$ is reconstructed from a simple initialization $H_{k,0}$ and only the $m$ most recent quasi-Newton pairs $(d^j,y^j)$, $j = k-m,\dots,k-1$, for some fixed memory size $m \ll k$.
This idea goes back to Nocedal’s fundamental work on L-BFGS \cite{nocedal_1980} and leads to limited-memory quasi-Newton methods with memory parameter~$m$.
In this setting the initialization $H_{k,0}$ may depend on the current iterate $k$ and is not necessarily equal to the original matrix $H_0$.

A crucial ingredient for the efficient numerical use of such methods is the existence of a compact representation of the form
\[
H_k = H_{k,0} + Q_k M_k^{-1} Q_k^\top,
\]
where $H_{k,0} \in \Smat_{++}^n$ is the initialization matrix, $Q_k \in \R^{n\times s}$ has only few columns ($s \ll n$), and $M_k \in \Smat^s$ is nonsingular.
Byrd, Nocedal \& Schnabel \cite[Theorems~2.3 and~5.1]{byrd_nocedal_schnabel_1994} showed that the L-BFGS and L-SR1 matrices admit such a representation.  

In order to derive an analogous representation for the Kleinmichel update, it is convenient to introduce the step and gradient-difference matrices
\begin{equation}\label{eq:kleinmichel_Sk_Yk}
	S_k \coloneqq \bigl[d^0 \; d^1 \; \dots \; d^{k-1}\bigr] \in \R^{n\times k}
	\quad\text{and}\quad
	Y_k \coloneqq \bigl[y^0 \; y^1 \; \dots \; y^{k-1}\bigr] \in \R^{n\times k}.
\end{equation}
The following theorem provides a compact representation of $H_k$ in terms of $H_0$, $S_k$ and $Y_k$.
It plays the same role for the Kleinmichel formula as the results in \cite{byrd_nocedal_schnabel_1994} do for the BFGS and SR1 updates.
In a limited-memory implementation one simply replaces $S_k$ and $Y_k$ by the matrices containing the $m$ most recent columns, exactly as in the L-BFGS and L-SR1 cases.

\begin{theorem}
    Let the symmetric matrix $H_0$ be updated $k$ times by means of the Kleinmichel formula using the pairs $\left\{ d^j, y^j \right\}_{j=0}^{k-1}$, and assume that each update is well-defined, namely $\innerprod{y^j-\gamma_j H_j d^j}{d^j} \neq 0$, $j=0, \ldots, k-1$.
    Then the resulting matrix $H_k$ is given by 
    \begin{align}
        \label{eq_KM_C}
        H_k = \overline{\gamma}_k H_0 + Q_k M_k^{-1} Q_k^\top,
    \end{align}
    where $S_k$ and $Y_k$ are defined in \eqref{eq:kleinmichel_Sk_Yk}, $\overline{\gamma}_k \coloneqq \prod_{i=0}^{k-1} \gamma_i$ for all $k \geq 1$, the nonsingular matrix $M_k$ is defined recursively by
    \[
    M_1 = \innerprod{q^0}{d^0}, \quad
    M_{j+1} = \begin{bmatrix}
        \gamma_j^{-1} M_j & Q_j^\top d^j \\
        (Q_j^\top d^j)^\top & \innerprod{q^j}{d^j}
    \end{bmatrix}, \quad j \geq 1 ,
    \]
    and $Q_k \coloneqq [q^0, \ldots, q^{k-1}] \coloneqq Y_k - \overline{\gamma}_{k} H_0 S_k$.
\end{theorem}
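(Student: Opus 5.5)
The plan is an induction on $k$ combined with the block-inverse (Schur complement) formula for the bordered matrix $M_{j+1}$. I will first rewrite one Kleinmichel step \eqref{eq_kleinmichel} as
\[
H_{j+1} = \gamma_j H_j + \frac{u^j (u^j)^\top}{\innerprod{u^j}{d^j}}, \qquad u^j \coloneqq y^j - \gamma_j H_j d^j ,
\]
which only distributes the outer factor $\gamma_j$. The base case $k=1$ follows by direct substitution. With $H_0$ and $\overline\gamma_1=\gamma_0$ we have $u^0 = y^0-\gamma_0 H_0 d^0 = q^0$, so $H_1 = \gamma_0 H_0 + q^0(q^0)^\top/\innerprod{q^0}{d^0}$, which is \eqref{eq_KM_C} with $M_1=\innerprod{q^0}{d^0}\neq 0$.

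For the induction step, assume $H_j = \overline\gamma_j H_0 + Q_j M_j^{-1}Q_j^\top$ with $M_j$ nonsingular. Inserting this into $u^j$ and using $\gamma_j\overline\gamma_j=\overline\gamma_{j+1}$ gives
\[
u^j = \bigl(y^j - \overline\gamma_{j+1}H_0 d^j\bigr) - Q_j w^j = q^j - Q_j w^j, \qquad w^j \coloneqq \gamma_j M_j^{-1}Q_j^\top d^j .
\]
Hence $\gamma_j H_j = \overline\gamma_{j+1}H_0 + \gamma_j Q_jM_j^{-1}Q_j^\top$. It then remains to show that
\[
\gamma_j Q_jM_j^{-1}Q_j^\top + \frac{u^j(u^j)^\top}{\innerprod{u^j}{d^j}} = [Q_j\; q^j]\, M_{j+1}^{-1}\,[Q_j\; q^j]^\top .
\]

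To prove this identity I will use the standard inverse of the bordered matrix $M_{j+1}=\begin{bmatrix}\gamma_j^{-1}M_j & b\\ b^\top & c\end{bmatrix}$, where $b=Q_j^\top d^j$ and $c=\innerprod{q^j}{d^j}$. Its Schur complement is
\[
s = c-\gamma_j b^\top M_j^{-1}b = \innerprod{q^j}{d^j}-\innerprod{Q_jw^j}{d^j} = \innerprod{u^j}{d^j},
\]
which is nonzero by the well-definedness hypothesis. This shows that $M_{j+1}$ is nonsingular, with inverse
\[
M_{j+1}^{-1}=\begin{bmatrix}\gamma_j M_j^{-1} + w^j (w^j)^\top/s & -w^j/s\\ -(w^j)^\top/s & 1/s\end{bmatrix}.
\]
Multiplying out $[Q_j\; q^j]M_{j+1}^{-1}[Q_j\;q^j]^\top$ gives exactly $\gamma_jQ_jM_j^{-1}Q_j^\top + (q^j-Q_jw^j)(q^j-Q_jw^j)^\top/s$, which is the required right-hand side. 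This completes the induction, including nonsingularity of every $M_k$.

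The main obstacle is the bookkeeping of $Q_k$. The induction naturally produces $Q_{j+1}=[Q_j\;q^j]$ with the new column $q^j = y^j-\overline\gamma_{j+1}H_0d^j$. Read literally, however, $Y_k-\overline\gamma_kH_0S_k$ rescales \emph{all} columns by the current product $\overline\gamma_k$, and this is not compatible with appending one column unless $\gamma_i\equiv 1$ (the SR1 case). I would therefore first fix the interpretation $q^i = y^i - \overline\gamma_{i+1}H_0 d^i$, i.e.\ $Q_k = Y_k - H_0 S_k \operatorname{diag}(\overline\gamma_1,\dots,\overline\gamma_k)$, which is the one consistent with the stated recursion for $M_{j+1}$, since $M_{j+1}$ uses $Q_j$ and $q^j$ from earlier stages. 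Only if that reading fails would I look for a rescaling of $M_k$ that absorbs the column-dependent factors. The $1\times1$ check with $k=2$ is a quick way to decide which reading is the right one before writing the general argument.
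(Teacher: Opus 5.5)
Your proposal is correct and follows essentially the same route as the paper: induction on $k$, writing the new low-rank part as $[Q_k\; q^k]\,(\cdot)\,[Q_k\; q^k]^\top$, and identifying the middle matrix as the inverse of the bordered matrix $M_{k+1}$, whose Schur complement is the paper's $\delta_k=\innerprod{y^k-\gamma_k H_k d^k}{d^k}$, i.e.\ your $s$ (the paper checks this inverse by direct multiplication rather than quoting the block-inverse formula). Your reading $q^i=y^i-\overline{\gamma}_{i+1}H_0d^i$ with $Q_{k+1}=[Q_k\; q^k]$ is exactly what the paper's proof silently uses, since it evaluates $\innerprod{q^k}{d^k}$ with $\overline{\gamma}_{k+1}$ and then appends $q^k$; so you are right that the literal $Y_k-\overline{\gamma}_kH_0S_k$ fits only when all $\gamma_i=1$, and your fallback of rescaling $M_k$ is unnecessary---it could not help anyway, because the literal columns generically span a different subspace.
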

\begin{proof}
    We proceed by induction.
    When $k=1$ the right-hand side of \eqref{eq_KM_C} is 
    \[
    \overline{\gamma}_1 H_0 + Q_1 M_1^{-1}Q_1^\top
    =
    \gamma_0 H_0 + \frac{(y^0 - \gamma_0 H_0 d^0) (y^0 - \gamma_0 H_0 d^0)^\top}{\innerprod{y^0- \gamma_0 H_0 d^0}{d^0}}
    =
    H_1.
    \]
    Let us now assume that \eqref{eq_KM_C} holds for some $k$. 
    Moreover, define
    \[
    \delta_k \coloneqq \innerprod{q^k}{d^k} - \gamma_k (Q_k^\top d^k)^\top M_k^{-1} Q_k^\top d^k.
    \]
    By definition of $Q_k$, $\overline{\gamma}_k$ and the formula for $H_k$, it holds that
    \begin{align*}
        \delta_k
        \coloneqq{}&
        \innerprod{q^k}{d^k} - \gamma_k (Q_k^\top d^k)^\top M_k^{-1} Q_k^\top d^k
        \\
        ={}&
        \innerprod{y^k}{d^k} - \overline{\gamma}_{k+1} \innerprod{d^k}{H_0 d^k} - \gamma_k (Q_k^\top d^k)^\top M_k^{-1} Q_k^\top d^k
        \\
        ={}&
        \innerprod{y^k}{d^k} - \gamma_k \innerprod{d^k}{\left( \overline{\gamma}_k H_0 + Q_k M_k^{-1} Q_k^\top \right) d^k}
        \\
        ={}&
        \innerprod{y^k - \gamma_k H_k d^k}{d^k}
        \neq 0
    \end{align*}
	by assumption.
	Applying the Kleinmichel update to $H_k$ we have
    \begin{align*}
        H_{k+1}
        ={}&
        \gamma_k \left( \overline{\gamma}_k H_0 + Q_k M_k^{-1} Q_k^\top \right) + \frac{(q^k - \gamma_k Q_k M_k^{-1} Q_k^\top d^k)(q^k - \gamma_k Q_k M_k^{-1} Q_k^\top d^k)^\top}{\innerprod{q^k}{d^k} - \gamma_k \innerprod{d^k}{Q_k M_k^{-1}Q_k^\top d^k}}
        \\
        ={}&
        \overline{\gamma}_{k+1} H_0 + \frac{1}{\delta_k} \left[ \delta_k \gamma_k Q_k M_k^{-1} Q_k^\top + q^k (q^k)^\top - \gamma_k q^k (d^k)^\top Q_k M_k^{-1} Q_k^\top
        	\right.\\ &\left.\qquad
        - \gamma_k Q_k M_k^{-1} Q_k^\top d^k (q^k)^\top + \gamma_k^2 (Q_k M_k^{-1}Q_k^\top d^k)(Q_k M_k^{-1}Q_k^\top d^k)^\top \right]
        ,
    \end{align*}
    which can be expressed as
    \begin{multline}
        \label{eq_km_compact_1}
        H_{k+1}
        = \overline{\gamma}_{k+1} H_0
        \\ 
        + \frac{1}{\delta_k} \begin{bmatrix} Q_k & q^k \end{bmatrix} \begin{bmatrix}
            \gamma_k M_k^{-1} (\delta_k I + \gamma_k Q_k^\top d^k (d^k)^\top Q_k M_k^{-1}) & - \gamma_k M_k^{-1} Q_k^\top d^k \\
            - \gamma_k (d^k)^\top Q_k M_k^{-1} & I
        \end{bmatrix}
        \begin{bmatrix}
            Q_k^\top \\
            (q^k)^\top
        \end{bmatrix} .
    \end{multline}
    By direct multiplication we obtain
    \begin{equation}
        \label{eq_km_compact_2}
        \begin{bmatrix}
            \gamma_k^{-1} M_k & Q_k^\top d^k \\
            (d^k)^\top Q_k & \innerprod{q^k}{d^k}
        \end{bmatrix} 
        \begin{bmatrix}
            \gamma_k M_k^{-1} (\delta_k I + \gamma_k Q_k^\top d^k (d^k)^\top Q_k M_k^{-1}) & - \gamma_k M_k^{-1} Q_k^\top d^k \\
            - \gamma_k (d^k)^\top Q_k M_k^{-1} & I
        \end{bmatrix} \frac{1}{\delta_k} = I,
    \end{equation}
    since
    \begin{multline*}
        \gamma_k (d^k)^\top Q_k M_k^{-1} \big(\delta_k I + \gamma_k Q_k^\top d^k (d^k)^\top Q_k M_k^{-1} \big) - \gamma_k (q^k)^\top d^k (d^k)^\top Q_k M_k^{-1}
        \\
        =
        \gamma_k \big(\delta_k + \gamma_k (d^k)^\top Q_k M_k^{-1} Q_k^\top d^k - \innerprod{q^k}{d^k} \big) (d^k)^\top Q_k M_k^{-1}
        = 0
    \end{multline*}
    by definition of $\delta_k$.
    Therefore, $M_{k+1}$ is invertible with $M_{k+1}^{-1}$ given by the second matrix in \eqref{eq_km_compact_2}.
    However, this is also the matrix appearing in \eqref{eq_km_compact_1} and hence we see that \eqref{eq_km_compact_1} is exactly \eqref{eq_KM_C} with $k+1$ instead of $k$, which establishes the result.
\end{proof}

\section{Algorithmic details}\label{sec:algo_refinements}

This section collects design choices and features for our implementation \labelRPQN{} of \cref{alg:RPQN}.

\subsection{Termination condition}

\cref{alg:RPQN} is complemented with a criterion for declaring that an iterate $x^k$ is sufficiently close to stationarity.
Upon a successful iteration, we compute the residual
\begin{equation}
    r_k \coloneqq \mu_k \| x^{k+1}-x^k \| = \mu_k \|d^k\|,
\end{equation}
which provides a metric for monitoring the quality of $x^k$; see \cref{lemma:pred} and \cite{stella2017simple,kanzow_mehlitz_2022}.
As soon as the residual $r_k$ falls below a user-specified tolerance, \cref{alg:RPQN} returns the current iterate $x^k$ and terminates.

\subsection{Solution of the subproblems}\label{sec_subproblems}

Subproblems arising at \cref{algo_line_subproblem} are tackled in essentially the same way as in \cite[Section~3.3]{lechner_2022},
which exploits compact representations of quasi-Newton approximations to efficiently solve the scaled proximal subproblem \eqref{eq_subproblem}.
For completeness we briefly summarize the approach and refer to \cite{lechner_2022} for further details.

In each iteration of \cref{alg:RPQN} we must compute the solution of subproblem \eqref{eq_subproblem}, which by \cref{lem:prox_characterization} is equivalent to computing 
\[
	\hat x^k \in \prox_{\varphi}^{G_k}\left(x^k - G_k^{-1}\nabla f(x^k)\right),
\]
where $G_k \coloneqq B_k + \mu_k I$ and $B_k \approx \nabla^2 f(x^k)$.
If $B_k$ is obtained by a limited-memory BFGS, SR1, or Kleinmichel update, then it admits a compact representation
\begin{equation}
    \label{eq_compact}
    B_k = B_{k,0} + Q_k M_k^{-1} Q_k^\top,
\end{equation}
see \cite[Section~3.3.1]{lechner_2022} and \cref{sec:kleinmichel}.
Following \cite[Section~3.3.2]{lechner_2022}, this representation can be rewritten in the form
\[
	B_k = B_{k,0} + U_{k,1} U_{k,1}^\top - U_{k,2} U_{k,2}^\top,
\]
with matrices $U_{k,i} \in \R^{n\times r_i}$ of small rank $r_i>0$ $(i=1,2)$.
Consequently,
\begin{equation}
	\label{eq_Gk}
	G_k = B_k + \mu_k I = B_{k,0}+\mu_kI + U_{k,1} U_{k,1}^\top - U_{k,2} U_{k,2}^\top,
\end{equation}
so that we can apply the following result of Becker, Fadili \& Ochs \cite[Corollary~3.6]{becker_fadili_ochs_2019}, stated here akin to \cite[Theorem~3.21]{lechner_2022}.

\begin{theorem}
    \label{thm:becker_fadili_ochs}
    Let $H = H_0 + U_1 U_1^\top - U_2 U_2^\top \in \Smat_{++}^{n}$, with 
    $H_0 \in \Smat_{++}^{n}$ and $U_i \in \R^{n \times r_i}$ with 
    rank $r_i$ $(i = 1,2)$. Set $H_1 = H_0 + U_1 U_1^\top$.
    Then the following holds:
    \begin{equation*}
        \prox_{\varphi}^{H}(y)
        = \prox_{\varphi}^{H_0}\left(y + H_1^{-1}U_2 \alpha_2^{\ast}
        - H_0^{-1} U_1 \alpha_1^{\ast}\right),
    \end{equation*}
    where $\alpha_i^{\ast} \in \R^{r_i}$, $i = 1,2$, are the unique zeros of the 
    coupled system $\Xi(\alpha) = 0$, where 
    $\Xi \colon \R^{r_1+r_2}\to\R^{r_1+r_2}$ is defined via
    \begin{equation*}
    	\Xi(\alpha) \coloneqq \begin{pmatrix}
    		U_1^\top\bigl(y + H_1^{-1} U_2 \alpha_2 
    		- \prox_{\varphi}^{H_0}(y + H_1^{-1} U_2 \alpha_2
    		- H_0^{-1} U_1 \alpha_1)\bigr)
    		+ \alpha_1
    		\\
    		U_2^\top\bigl(
    		y-\prox_{\varphi}^{H_0}(y + H_1^{-1} U_2 \alpha_2
    		- H_0^{-1} U_1 \alpha_1)\bigr)
    		+ \alpha_2
    	\end{pmatrix}.
    \end{equation*}
\end{theorem}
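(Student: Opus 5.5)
The proof rests on the optimality condition of the prox problem, split by block elimination into a rank-$r_1$ Woodbury step and a rank-$r_2$ correction. Write $p \in \prox_\varphi^{H}(y)$. Since $H\in\Smat_{++}^n$ and $\varphi$ need not be convex, I would first treat the convex case, where Fermat's rule is necessary and sufficient. The plan is to introduce auxiliary variables $\alpha_1 \coloneqq -U_1^\top(y-p)$ and $\alpha_2 \coloneqq -U_2^\top(y-p)$ (signs to be fixed so that they match $\Xi$). With these, the low-rank part of the quadratic is moved into a linear term. The optimality condition $0\in H(p-y)+\partial\varphi(p)$ becomes
\[
0 \in H_0(p-y) - U_1\alpha_1 + U_2\alpha_2 + \partial\varphi(p),
\]
up to the sign convention.

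To reach the stated formula, I would split the correction in two stages. First absorb $U_1U_1^\top$ via $H_1$, and then treat $-U_2U_2^\top$ as a perturbation of $H_1$. Rewrite the $U_2$ term as a shift of the prox center in the $H_1$ metric, $y \mapsto y + H_1^{-1}U_2\alpha_2$, which gives $p=\prox_\varphi^{H_1}(y+H_1^{-1}U_2\alpha_2)$. Then apply the rank-$r_1$ result to $H_1=H_0+U_1U_1^\top$, that is, \cite[Theorem~3.4]{becker_fadili_ochs_2019}-type reasoning. This yields $\prox_\varphi^{H_1}(z)=\prox_\varphi^{H_0}(z-H_0^{-1}U_1\alpha_1)$ with $\alpha_1$ solving $U_1^\top(z-\prox_\varphi^{H_0}(z-H_0^{-1}U_1\alpha_1))+\alpha_1=0$. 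Substituting $z=y+H_1^{-1}U_2\alpha_2$ produces exactly the first block of $\Xi$. The consistency condition defining $\alpha_2$ through $p$ produces the second block. For each substitution I would check that both sides satisfy the same monotone inclusion, and invoke uniqueness of the prox in the convex case.

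Existence and uniqueness of the zero $\alpha^*$ are the main obstacle. For the $U_1$ block, I would show that $\alpha_1\mapsto U_1^\top(z-\prox_\varphi^{H_0}(z-H_0^{-1}U_1\alpha_1))+\alpha_1$ is strongly monotone. This follows from firm nonexpansiveness of $\prox_\varphi^{H_0}$ in the $H_0$-norm. For the $U_2$ block, monotonicity fails, so I would argue via a Schur-complement/dual formulation: $H\succ0$ implies $I-U_2^\top H_1^{-1}U_2\succ 0$, and this gives uniqueness. Since the paper states this result as taken from \cite[Corollary~3.6]{becker_fadili_ochs_2019} for convex $\varphi$ (as in \cite[Theorem~3.21]{lechner_2022}), I would ultimately cite that source for the uniqueness part rather than reprove it. For nonconvex $\varphi$ I would only claim the inclusion direction, with the same derivation. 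This is not a proof of the equality as stated: without convexity, $\prox_\varphi^{H}(y)$ may be set-valued and Fermat's rule is only necessary, so the statement is fully covered only in the convex setting of that reference.
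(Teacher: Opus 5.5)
The paper gives no proof of its own. It imports the result from \cite[Corollary~3.6]{becker_fadili_ochs_2019} (as restated in \cite[Theorem~3.21]{lechner_2022}), where $\varphi$ is convex. For convex $\varphi$, your two-stage reduction is the argument behind that corollary, and it is correct: you absorb $-U_2U_2^\top$ by shifting the prox center in the $H_1$-metric, then apply the rank-$r_1$ ``plus'' result in the $H_0$-metric.

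You make existence and uniqueness of $\alpha^\ast$ harder than necessary, though. Set $w(\alpha) \coloneqq y + H_1^{-1}U_2\alpha_2 - H_0^{-1}U_1\alpha_1$ and $p \coloneqq \prox_\varphi^{H_0}(w(\alpha))$. Then $\Xi(\alpha)=0$ says precisely $\alpha_2 = U_2^\top(p-y)$ and $\alpha_1 = U_1^\top(p-y-H_1^{-1}U_2\alpha_2)$, and these two relations give
\[
H_0\bigl(p-w(\alpha)\bigr) = H_1\bigl(p-y-H_1^{-1}U_2\alpha_2\bigr) = H(p-y).
\]
Hence every zero yields $0\in\partial\varphi(p)+H(p-y)$, i.e.\ $p=\prox_\varphi^H(y)$, and this determines $\alpha$. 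Conversely, defining $\alpha$ by these two formulas from $p=\prox_\varphi^H(y)$ produces a zero. No monotonicity is needed.

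You are right that the coupled map $\Xi$ need not be monotone. However, the reduced map $\alpha_2\mapsto U_2^\top\bigl(y-\prox_\varphi^{H_1}(y+H_1^{-1}U_2\alpha_2)\bigr)+\alpha_2$ is strongly monotone with modulus $\lambda_{\min}(I-U_2^\top H_1^{-1}U_2)$, by nonexpansiveness of $\prox_\varphi^{H_1}$ in the $H_1$-norm. That is where your Schur complement belongs. Also, your first guess $\alpha_1=U_1^\top(p-y)$ is not the theorem's $\alpha_1^\ast$, which is measured from the shifted center; the two-stage version corrects this.

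The genuine error is your fallback for nonconvex $\varphi$. Fermat's rule only makes $p$ a stationary point of $\varphi+\frac12\|\cdot-w\|_{H_0}^2$, not an element of $\prox_\varphi^{H_0}(w)$. In fact neither inclusion survives. Since $H_0(p-w)=H(p-y)$, one has
\[
\tfrac12\|x-y\|_H^2=\tfrac12\|x-w\|_{H_0}^2+\tfrac12(x-p)^\top(U_1U_1^\top-U_2U_2^\top)(x-p)+\mathrm{const},
\]
and this correction term is indefinite. The $U_1$-part can make points of $\prox_\varphi^H(y)$ unreachable, while the $U_2$-part can create spurious zeros.

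Concretely, take $n=2$, $H_0=I$, $U_1=e_1$, $U_2=\tfrac{1}{\sqrt2}e_2$, so $H=\operatorname{diag}(2,\tfrac12)$. Zeros of the (now set-valued) $\Xi$ correspond exactly to points $p\in\prox_\varphi^{I}(p-H(p-y))$.
\begin{itemize}
\item For $\varphi(x)=\delta_{\{-1,1\}}(x_1)$ and $y=(0.2,0)$, one has $\prox_\varphi^H(y)=\{(1,0)\}$. But the nearest point of $\{-1,1\}$ to $0.4-p_1$ is $-p_1$ for both $p_1=\pm1$, so $\Xi$ has no zero at all.
\item For $\varphi(x)=\delta_{\{-1,1\}}(x_2)$ and $y=(0,0.2)$, one has $\prox_\varphi^H(y)=\{(0,1)\}$. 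But both $\alpha=(0,0.8/\sqrt2)$ and $\alpha=(0,-1.2/\sqrt2)$ are zeros, and the latter produces $(0,-1)$.
\end{itemize}
So the theorem genuinely needs convexity of $\varphi$, which the paper leaves implicit in the citation. You are right not to claim the equality otherwise, but no one-sided version holds either.
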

Applied with $H \coloneqq G_k$ from representation~\eqref{eq_Gk} and $y \coloneqq x^k - G_k^{-1}\nabla f(x^k)$, \cref{thm:becker_fadili_ochs} shows that, once $\prox_{\varphi}^{H_0}$ can be computed analytically (which is often the case when $H_0$ is a scaled identity matrix), the computation of $\hat x^k \in \prox_{\varphi}^{G_k}(y)$ reduces to solving the low-dimensional and strongly monotone system $\Xi(\alpha)=0$ in $\R^{r_1+r_2}$, where $r_1$ and $r_2$ are typically very small compared with $n$. 

The mapping $\Xi$ in \cref{thm:becker_fadili_ochs} is Newton differentiable for a variety of regularizers $\varphi$, and a generalized derivative of $\Xi$ can be expressed explicitly in terms of a generalized derivative of $\prox_{\varphi}^{H_0}$; see \cite[Proposition~3.22]{lechner_2022} for details.
For many typical choices of $\varphi$, such as the $\ell_1$- and $\ell_2$-norms and group-sparsity penalties, this generalized derivative can be computed analytically.

\subsection{Skipping quasi-Newton updates}

Since the BFGS and Kleinmichel updates remain positive definite when $\innerprod{d^k}{y^k} > 0$,
it is standard practice to skip the update whenever
\begin{equation*}
	\innerprod{d^k}{y^k} < \varepsilon_{\textrm{QN}} \|d^k\|^2
\end{equation*}
for some prescribed $\varepsilon_{\textrm{QN}} > 0$.
In particular, for the Kleinmichel update we set $\gamma_k = \frac{\innerprod{y^k}{d^k}}{2\innerprod{d^k}{H_k d^k}}$ at each iteration $k \in \N_0$, in accordance with \cref{theorem_km_pd}.
Throughout the numerical experiments, we set $\varepsilon_{\textrm{QN}} = 10^{-8}$.
In the case of the SR1 update, poorly conditioned steps are automatically excluded by a strategy described in \cite[Section~3.3.2]{lechner_2022}.

The initialization matrix $H_{k,0}$ for the limited-memory quasi-Newton updates is chosen as
\[
	H_{k,0} = \frac{\innerprod{y^k}{y^k}}{\innerprod{d^k}{y^k}} I,
\]
following the recommendation of Liu \& Nocedal \cite{liu_nocedal_1989}.
In particular, when no curvature pairs are stored (i.e., zero memory), the resulting algorithm reduces to a pure proximal-gradient method.

\subsection{Parameter updates}\label{sec:tr_update}

\cref{alg:RPQN} does not specify how to update the regularization parameter $\mu$ at successful iterations; it concedes any value in the user-specified interval $[\mumin,\mumax]$.
Our implementation \labelRPQN{} follows classical trust-region methods, distinguishing between successful and highly successful steps.
Specifically, \cref{algo_line_K_commands_2} is executed according to
\begin{equation}
	\mu_{k+1} = \begin{cases}
		\sigma_1 \mu_k &\text{if } \ared_k \geq c_2 \pred_k, \\
		\mu_k &\text{otherwise}
	\end{cases}
\end{equation}
for some fixed $\sigma_1 \in (0,1)$ and $c_2 \in (c_1, 1)$.
Included also in R2N \cite{diouane2026proximal}, this strategy allows to reduce the regularization, possibly speeding up convergence, when the model seems to fit the actual problem particularly well.
Otherwise, the regularization parameter is kept constant to avoid unwarranted unsuccessful iterations.

\subsection{Nonmonotone globalization}\label{sec:nonmonotone_glob}

In order to impose less conservative steps, with the aim of reducing the number of unsuccessful iterations, we incorporate a nonmonotone globalization mechanism.
While \cite{birgin2000nonmonotone,birgin_martinez_raydan_2014,wright2009sparse,kanzow_mehlitz_2022,diouane2026proximal} adopt the \emph{max} kind of nonmonotonicity introduced by Grippo, Lampariello \& Lucidi \cite{grippo1986nonmonotone},
we use instead the \emph{average} scheme investigated in \cite{zhang_hager_2004,themelis_stella_patrinos_2018,demarchi_2023}, which does not interfere with convergence properties of the monotone counterpart.

Let $\nmfactor \in (0,1]$ be a given monotonicity factor and, for all $k\in\N$, define recursively the merit sequence $\{\Phi_k\}$ by
\[
\Phi_k \coloneqq \begin{cases}
	F(x^0) &\text{if}~k=0 , \\
	\nmfactor F(x^k) + (1-\nmfactor) \Phi_{k-1} &\text{if}~k>0 .
\end{cases}
\]
Then, by evaluating the actual improvement $\ared_k$ at \cref{algo_line_ared_pred} according to
\[
\ared_k \coloneqq \Phi_k - F(\hat{x}^k),
\]
\cref{alg:RPQN} enforces sufficient decrease with respect to the merit $\Phi_k$ and not the current objective $F(x^k)$.
The scheme falls back to monotone decrease when $\nmfactor = 1$, otherwise the fact that $\Phi_k \geq F(x^k)$ by \cite[Lemma 4.2]{demarchi_2023} relaxes the condition for accepting a tentative update $\hat{x}^k$, allowing larger steps and possibly faster convergence in practice.

\section{Numerical results}\label{Sec:Numerics}
\newcommand{\best}[1]{\textbf{#1}}

In this section, we report numerical experiments for our implementation \texttt{RPQN} of \cref{alg:RPQN} on several instances of \eqref{problem}.
We evaluate algorithmic variants with three limited-memory quasi-Newton updates (BFGS, SR1 and Kleinmichel) and (non)monotone globalization mechanism.

\labelRPQN{} is compared against three other solvers designed to tackle \eqref{problem}.
\labelSPG{} implements a proximal-gradient method with spectral (or Barzilai--Borwein) stepsize \cite{birgin_martinez_raydan_2014,jia_kanzow_mehlitz_2023,demarchi_2023},
which often performs better than proximal-gradient methods with simple backtracking.
\labelPANOC{} is a proximal-gradient method that incorporates quasi-Newton steps to speed up convergence \cite{stella2017simple,demarchi2022proximal}.
Finally, \labelRtwoN{} is a proximal quasi-Newton method that combines a second-order model with an inexact inner solve and an adaptive quadratic regularization \cite{diouane2026proximal}.
Our implementation of solvers \labelSPG{}, \labelPANOC{} and \labelRtwoN{} closely follows the original descriptions in \cite{demarchi_2023}, \cite{demarchi2022proximal} and \cite{diouane2026proximal,gollier2026regularizedoptimization}, respectively, with minimal modifications to ensure comparability across methods.
In particular, all solvers have a common termination criterion and (non)monotone globalization strategy, as presented in \cref{sec:algo_refinements}.
\labelPANOC{} and \labelRtwoN{} use limited-memory BFGS with memory $m=5$.
A summary of all solver configurations is given in \cref{tab:solvers}.

 \begin{table}[tbh]
    \centering
    \begin{tabular}{cc}
         \hline
         \labelSPG & proximal-gradient method with spectral stepsize \\
         \labelPANOC & PANOC$^+$ with limited-memory BFGS \\
         \labelRtwoN & proximal quasi-Newton method with limited-memory BFGS \\
         \hline
         \labelRPQNlkm & RPQN with limited-memory Kleinmichel \\
         \labelRPQNlsrone & RPQN with limited-memory SR1 \\
         \labelRPQNlbfgs & RPQN with limited-memory BFGS \\
         \hline
         \texttt{...}\labelnm & variant with averaged nonmonotone globalization \\
         \hline
    \end{tabular}
    \caption{Solver configurations considered for comparison in our numerical experiments.}
    \label{tab:solvers}
\end{table}

For the sake of comparison, all solvers return based on the same termination criteria:
residual $r_k$ smaller than a tolerance ($\texttt{tol}>0$)
or exceeded time limit (300 wall-clock seconds).\footnote{All computations were performed sequentially in MATLAB R2025b on a 64-bit Linux laptop.
The hardware configuration was an Intel Core i7 processor, 16 GB RAM.
Timings are intended for relative comparison only.}
Our tests consider both low ($\texttt{tol}=10^{-3}$) and high ($\texttt{tol}=10^{-5}$) accuracy.

All \texttt{RPQN} solvers run using an identical set of parameters:
$c_1 = 10^{-4}$, $c_2 = \nicefrac{9}{10}$, $\sigma_1 = \nicefrac{1}{2}$, $\sigma_2 = 4$; see \cref{sec:tr_update}.
For all limited-memory updates, we choose a memory of $m=10$. For the solution of subproblems (explained in \cref{sec_subproblems}),
we choose a tolerance of $10^{-9}$ as a stopping criterion.
For the nonmonotone variants, we set $\nmfactor = \nicefrac{1}{10}$.

We compared also against \cite[Algorithm 5.1]{lechner_2022}, which requires convex $\varphi$ and differs from \cref{alg:RPQN} only in an additional condition required to accept the update at \cref{algo_line_acceptance_test}.
Regardless of the sufficient decrease condition $\ared_k \geq c_1 \pred_k$, the candidate $\hat{x}^k$ is rejected if 
\[
\pred_k \leq p_{\min} \|d^k\| \min\left\{ \|G(x^k)\|, \|G(x^k)\|^\kappa \right\}
\]
holds, for some user-specified $p_{\min}\in(0,\nicefrac{1}{2})$, $\kappa>1$ and where $G(x) \coloneqq \prox_{\varphi}(x-\nabla f(x))-x$.
We paired each \labelRPQN{} variant with a counterpart that incorporates this condition, using the default values $p_{\min} = 10^{-8}$ and $\kappa=2$ suggested in \cite{lechner_2022}.
We expected and observed negligible differences in the practical performance of these \labelRPQN{} variants.

\medskip

Our experiments first consider convex logistic regression models with either convex or nonconvex regularization (\cref{sec:logreg}).
We then turn to problems with the nonconvex Student’s $t$-regression model, again with different regularizers (\cref{sec:studreg}).
Finally, we look at subproblems arising from an augmented Lagrangian method applied to a discretized obstacle problem (\cref{sec:obstacle}).
We analyze the impact of different quasi-Newton updates, of the nonmonotone globalization, and of accuracy requirements.
 
\medskip

Numerical results are reported as median wall-clock runtimes in tables, where we highlight the method with the best performance for each problem class and accuracy level.
Solvers are also compared in terms of runtime by means of profiles.
Given a set $P$ of problem instances and a set $S$ of solvers, let $\tau_{s,p}$ denote the runtime of solver $s\in S$ for problem $p\in P$.
When solver $s$ fails on problem $p$, we set $\tau_{s,p}=\infty$.
\newcommand{\basesolver}{\overline{s}}
Given a baseline solver $\basesolver\in S$, we adopt \emph{relative profiles} to compare the performance of different solver variants.
Each (relative) profile depicts the empirical distribution $\varrho_{s,\basesolver} \colon [0,\infty) \mapsto [0, 1]$ of the ratio $\tau_{s,\cdot} / \tau_{\basesolver,\cdot}$ over $P$, namely
\[
\forall \kappa \in [0,\infty) \colon\quad
\varrho_{s,\basesolver}(\kappa) \coloneqq \frac{|\{ p\in P \,|\, \tau_{s,p} \leq \kappa \tau_{\basesolver,p} \}|}{|P|},
\]
where the sample size $|P|$ is the cardinality of set $P$.
As such, a relative profile displays the fraction of problems $\varrho_{s,\basesolver}(\kappa)$ solved by $s$ within a factor $\kappa$ compared to the baseline solver $\basesolver$.

\subsection{Regularized logistic regression}\label{sec:logreg}
\newcommand{\nfeat}{n_{\textrm{f}}}
\newcommand{\nsamples}{n_{\textrm{s}}}

A common model for solving sparse binary classification problems is regularized logistic regression.
Given feature vectors $a_i\in\R^{\nfeat}$ and labels $b_i\in\{-1,1\}$ for $i=1,\dots,\nsamples$, we aim to
\begin{equation}
	\label{eq_log_reg_l1}
	\minimize_{y,v}\quad
	\frac{1}{\nsamples}\sum_{i=1}^{\nsamples} \log \left( 1+\exp \bigl( -b_i(a_i^\top y+v) \bigr) \right) + \varphi(y)
\end{equation}
with respect to $y\in\R^{\nfeat}$ and $v\in\R$.
In common test instances there are more samples than features ($\nsamples \gg \nfeat$).
A sparse representation is encouraged by including a regularizer $\varphi$.
First, we let $\varphi(y) \coloneqq \lambda\|y\|_1$ for some given regularization parameter $\lambda>0$.
Then, we consider the (nonconvex) capped $\ell_1$ regularizer, a piecewise linear approximation of $\ell_0$ given by
$\varphi(y) \coloneqq \lambda \sum_{i=1}^{\nfeat} g_1(y_i)$ where $g_1(t) \coloneqq \min\{|t|,1\}$.\footnote{The proximal mapping of these regularizers can be found online at \href{https://proximity-operator.net/}{proximity-operator.net}.}

\paragraph{Setup}

We generate sparse instances with $\nfeat=10^4$ and $\nsamples=10^5$, and vary the feature sparsity parameter $s\in\{10,100\}$.
For each $i$, the vector $a_i$ has approximately $s$ nonzero components; the locations are chosen at random and the nonzero entries are drawn independently from a standard normal distribution $\mathcal N(0,1)$.
We sample a ground-truth vector $y^{\mathrm{true}}\in\R^{\nfeat}$ with $10s$ nonzero entries, drawn again from $\mathcal N(0,1)$, and a bias $v^{\mathrm{true}}\sim\mathcal N(0,1)$.
Labels are then assigned via
\[
b_i = \sign\left(a_i^\top y^{\mathrm{true}} + v^{\mathrm{true}} + \xi_i\right),
\]
where $\xi_i\sim\mathcal N(0,\nicefrac{1}{10})$ independently for $i=1,\dots,\nsamples$.

We parametrize $\lambda$ as $\lambda \coloneqq c_\lambda \lambda_{\max}$ with $c_\lambda\in\{ 10^{-1}, 10^{-2}, 10^{-3} \}$.
Following \cite{boyd_2007}, we denote by $\nsamples^+$ and $\nsamples^-$ the number of indices with $b_i=1$ and $b_i=-1$, respectively, and choose
\[
\lambda_{\max}
= \frac{1}{\nsamples}\left\|
\frac{\nsamples^-}{\nsamples} \sum_{i:\,b_i=1} a_i + \frac{\nsamples^+}{\nsamples} \sum_{i:\,b_i=-1} a_i
\right\|,
\]
which is the smallest value for which the trivial solution $y^\ast=0$ (with some $v^\ast$) is optimal.

For each combination of $(s,c_\lambda)$, we generate 10 independent data sets and run each solver variant, starting from an all-zero vector $x^0\in\R^{\nfeat+1}$.

\paragraph{Results}

Results for the nonmonotone variants are reported in \cref{fig:logreg,tab:logreg}.
We omit the monotone variants as they performed similarly or worse, as illustrated below.
The \labelRPQN\labelnm{} configurations exhibit comparable practical performance, across regularizers, accuracy levels and limited-memory updates.
\labelRPQN{} consistently outperforms the other solvers, with a larger gap for high accuracy.
In this setting, \cref{fig:logreg} indicates that \labelRPQN\labelnm{} solvers are faster than \labelRtwoN\labelnm{} in 95\% of the instances, and about 2x faster in 80\% of the instances, while \labelSPG\labelnm{} and \labelPANOC\labelnm{} lag behind.

\begin{table}[tbh]
    \centering
    \begin{tabular}{c|cc|cc}
        \hline
         regularizer & \multicolumn{2}{c|}{$\ell_1$} & \multicolumn{2}{c}{capped $\ell_1$} \\
         accuracy & low & high & low & high  \\
         \hline
         \labelSPG\labelnm          & 1.195 &  3.197 & 2.378 & 63.689 \\
         \labelPANOC\labelnm        & 2.043 & 45.856 & 2.190 & 162.584 \\
         \labelRtwoN\labelnm        & 0.612 & 1.861 & 0.694 & 2.903 \\
         \labelRPQNlkm\labelnm      & 0.272 &  0.565 & \best{0.355} &   \best{0.565} \\
         \labelRPQNlsrone\labelnm   & 0.333 &  0.500 & 0.367 &   0.652 \\
         \labelRPQNlbfgs\labelnm    & \best{0.263} &  \best{0.471} & 0.364 & 0.579 \\
         \hline
    \end{tabular}
    \caption{Comparison on regularized logistic regression, in terms of median runtimes [s]. Sample size: 60 problems for each regularizer and accuracy level.}
    \label{tab:logreg}
\end{table}

\begin{figure}[tbh]
    \centering%
    \includegraphics{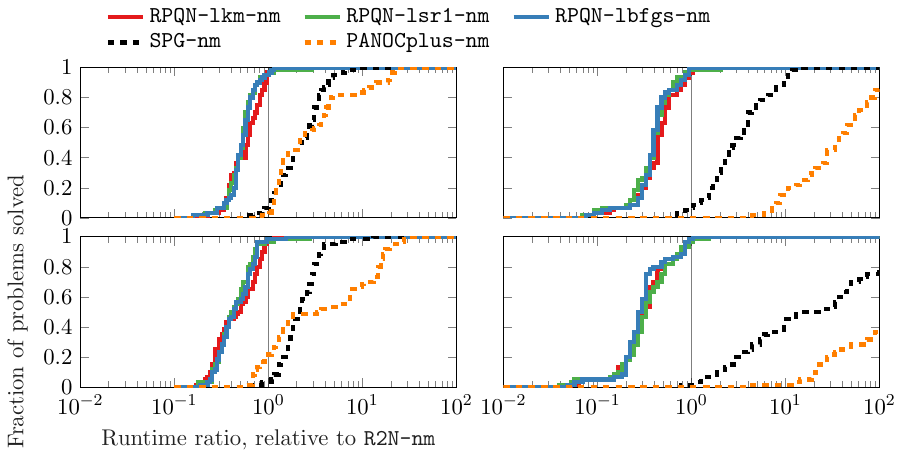}%
    \caption{Comparison on regularized logistic regression problems, in terms of relative runtimes. Accuracy level: low (left) and high (right). Regularization: $\ell_1$ (top) and capped-$\ell_1$ (bottom).}%
    \label{fig:logreg}%
\end{figure}

The effect of adopting a nonmonotone globalization is illustrated with pair-wise relative runtime profiles in \cref{fig:nm}, aggregating all logistic regression instances.
All methods appear to benefit from the nonmonotone mechanism, for both low and high accuracy, but especially in the latter setting.
\labelRPQNlbfgs{} shows little improvement, whereas \labelRPQNlkm\labelnm{} is at least 3x faster than \labelRPQNlkm{} in 60\% of the instances.
Since similar results were also obtained for the problems considered below, we do not report further on the performance of the monotone variants.

\begin{figure}[tbh]
	\centering%
	\includegraphics{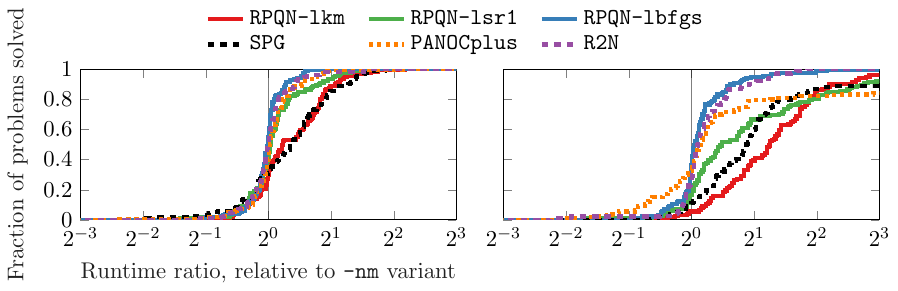}%
	\caption{Comparison of solver variants with monotone and nonmonotone globalization, in terms of relative runtimes. Results aggregated for all logistic regression problems. Accuracy level: low (left) and high (right). Sample size: 120 problems for each accuracy level.}%
	\label{fig:nm}%
\end{figure}

\subsection{Regularized Student's \texorpdfstring{$t$}{t}-regression}\label{sec:studreg}

We consider the regularized Student's $t$-regression problem, which seeks to
\begin{equation}
	\label{eq_studreg_l1}
	\minimize_{x}\quad
	\frac{1}{m}\sum_{i=1}^m \log \left( 1+\frac{(Ax-b)_i^2}{\nu} \right) + \varphi(x),
\end{equation}
with respect to $x\in\R^n$, where $A\in\R^{m\times n}$, $b\in\R^m$, and $\nu>0$.
The loss term is smooth, but generally nonconvex, while th regularizer $\varphi$ is added to promote sparsity.
As before for the logistic regression, we let $\varphi(x) \coloneqq \lambda \|x\|_1$ and then $\varphi(x) \coloneqq \lambda \sum_{i=1}^n g_1(x_i)$ for some weight $\lambda>0$.

\paragraph{Setup}
The test instances are generated as in \cite{liu_pan_wu_yang_2022, vomdahl_kanzow_2024}.
We set $m \coloneqq 1024$, $n \coloneqq 8m$, and $\nu \coloneqq \nicefrac{1}{4}$.
Matrix $A$ is defined via subsampled discrete cosine measurements: let $D\in\R^{n\times n}$ denote the discrete cosine transform (DCT) matrix, pick an index set $J\subset\{1,\dots,n\}$ uniformly at random with $|J|=m$, and define $Ax \coloneqq (Dx)_J$.
We generate a sparse ground-truth signal $x^{\mathrm{true}}\in\R^n$ with $s \coloneqq \lfloor \frac{n}{40} \rfloor$ nonzero entries at uniformly random locations.
The nonzero amplitudes are chosen according to
$x_i^{\mathrm{true}} \coloneqq \eta_{1,i} 10^{\eta_{2,i} d}$,
where $\eta_{1,i}\in\{-1,1\}$ is a random sign and $\eta_{2,i}$ is uniformly distributed on $[0,1]$.
The parameter $d\in\{2,3,4\}$ controls the dynamic range.
The observation vector is then given by $b \coloneqq A x^{\mathrm{true}} + \nicefrac{\xi}{10}$,
where the components of $\xi$ are drawn independently from a Student's $t$-distribution with $4$ degrees of freedom.
We choose the regularization parameter as $\lambda \coloneqq c_\lambda \|\nabla f(0)\|_\infty$ with $c_\lambda\in\{ 10^{-1},10^{-2}\}$.
For each combination of $d$ and $c_\lambda$ we generate 10 independent instances and start all methods from $x^0 \coloneqq A^\top b$.

\paragraph{Results}

\cref{fig:studreg,tab:studreg} summarize the results obtained on Student's $t$-regression problems with convex and nonconvex regularizer.
The \labelRPQN\labelnm{} configurations have similar performance for low accuracy, but \labelRPQNlsrone\labelnm{} remains behind for high accuracy.
\labelRPQNlbfgs\labelnm{} and \labelRPQNlkm\labelnm{} are consistently faster than the other solvers.
\cref{fig:studreg} indicates that, for high accuracy, \labelRPQNlkm\labelnm{} is at least 2x faster than \labelRtwoN\labelnm{} on 85\% of the instances.

\begin{figure}[tbh]
	\centering%
	\includegraphics{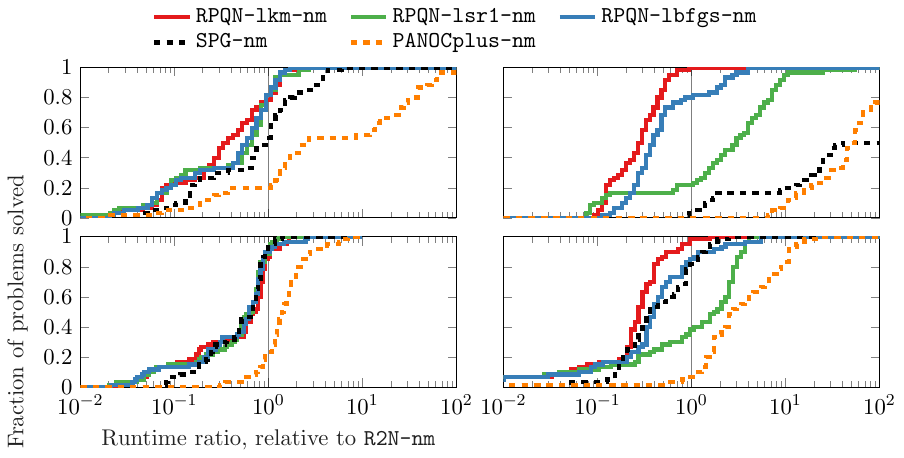}%
	\caption{Comparison on regularized Student's $t$-regression problems, in terms of relative runtimes. Accuracy level: low (left) and high (right). Regularization: $\ell_1$ (top) and capped-$\ell_1$ (bottom).}%
	\label{fig:studreg}%
\end{figure}

\begin{table}[tbh]
    \centering
    \begin{tabular}{c|cc|cc}
    	\hline
         regularizer & \multicolumn{2}{c|}{$\ell_1$} & \multicolumn{2}{c}{capped $\ell_1$} \\
         accuracy & low & high & low & high  \\
         \hline
         \labelSPG\labelnm          & 0.004 & 47.992 & \best{0.002} & 0.138 \\
         \labelPANOC\labelnm        & 0.006 & 24.737 & 0.003 & 1.529 \\
         \labelRtwoN\labelnm        & 0.035 & 0.671 & 0.003 & 0.371 \\
         \labelRPQNlkm\labelnm      & 0.003 & \best{0.122} & \best{0.002} & \best{0.096} \\
         \labelRPQNlsrone\labelnm   & \best{0.002} &  1.340 & \best{0.002} & 0.851 \\
         \labelRPQNlbfgs\labelnm    & \best{0.002} &  0.199 & \best{0.002} & 0.146 \\
         \hline
    \end{tabular}
    \caption{Comparison on regularized Student's $t$-regression, in terms of median runtimes [s]. Sample size: 60 problems for each regularizer and accuracy level.}
    \label{tab:studreg}
\end{table}

\subsection{Obstacle problem}\label{sec:obstacle}

We consider the optimal control of a discretized obstacle problem as investigated in
\cite[Example 6.2]{jia_kanzow_mehlitz_2023}.
The problem is to
\begin{subequations}\label{eq:obstacle}
    \begin{align}
        \minimize_{u,v,z\in\R^N}{}&\quad
        \frac{1}{2} \|u\|^2 + \frac{1}{2}\|v\|^2- \sum_{i=1}^{N} v
        \label{eq:obstacle_cost}\\
        \text{subject to}{}&\quad
        u+\mathcal{A}v-z=0 \label{eq:obstacle_Ax}\\
        &\quad
        u \geq 0 ,\quad
        v\geq 0 ,\quad
        z\geq 0 ,\quad
        v^\top z = 0 ,\label{eq:obstacle_cc}
    \end{align}
\end{subequations}
where $\mathcal{A}\in\R^{N\times N}$ is a tridiagonal matrix that arises from a discretization of the negative Laplace operator in one dimension, i.e., $a_{ii} = 2$ for all $i$ and $a_{ij} = -1$ for all $i = j \pm 1$.
The nonnegativity and complementarity constraints in \eqref{eq:obstacle_cc} can be represented as the inclusion of $(u,v,z)$ in a set $X\subset \R^{3N}$, which happens to have an easy-to-evaluate projection operator.
Problem \eqref{eq:obstacle} has the unique solution $u^*=v^*=z^*=0$, whose degeneracy makes it hard to find.

Using $x \coloneqq (u,v,z)$, problem \eqref{eq:obstacle} is equivalently rewritten as
\begin{equation}\label{eq:obstacle_compact}
    \minimize_{x\in\R^{3N}}\quad
    f_0(x) + \varphi(x)
    \quad\text{subject to}\quad
    Ax=0
\end{equation}
where $f_0$ denotes the quadratic cost in \eqref{eq:obstacle_cost}, $A \in \R^{N\times 3N}$ stems from the linear constraint in \eqref{eq:obstacle_Ax},
and setting $\varphi$ as the indicator of set $X$ encodes the inclusion $x\in X$ for the constraints in \eqref{eq:obstacle_cc}.
Following \cite{jia_kanzow_mehlitz_2023}, the augmented Lagrangian function for \eqref{eq:obstacle_compact} reads
\begin{equation}
    L_\mu(x,y) \coloneqq f_0(x) + \frac{1}{2\mu} \|Ax+\mu y\|^2 + \varphi(x)
\end{equation}
and has to be minimized for some given $\mu>0$ and $y\in\R^N$.
This is clearly of the form \eqref{problem}, with convex quadratic $f$ and nonsmooth nonconvex $\varphi$.

\paragraph{Setup}
We generate instances with $N\in\{ 2^7, 2^8, 2^9, 2^{10}, 2^{11}, 2^{12} \}$ and $\mu \in \{ 10^{-1}, 10^{-2}, 10^{-3} \}$.
For each combination of $N$ and $\mu$ we draw 10 independent samples of $y\in\R^N$ and $x^0\in\R^{3N}$ from a normal distribution, for a total of 180 calls to each solver for each accuracy level.

\paragraph{Results}
Numerical results are reported in \cref{fig:obstacle,tab:obstacle}.
The \labelRPQN\labelnm{} variants perform similarly across limited-memory updates and accuracy levels, and consistently better than the other solvers.
For both low and high accuracy, all \labelRPQN\labelnm{} variants are at least 3x faster than \labelRtwoN\labelnm{} in 80\% of the instances.
Compared to the numerical experiments in \cite[\S 6.1]{jia_kanzow_mehlitz_2023}, \labelRPQN\labelnm{} handles larger instances (from $N=2^6$ in \cite{jia_kanzow_mehlitz_2023} to $N=2^{12}$ here) with a fraction of the effort.

\begin{figure}[tbh]
	\centering%
	\includegraphics{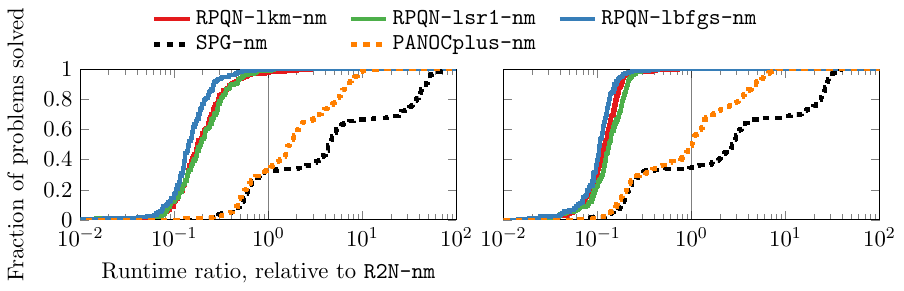}%
	\caption{Comparison on discretized obstacle problems, in terms of relative runtimes. Accuracy level: low (left) and high (right).}%
	\label{fig:obstacle}%
\end{figure}

\begin{table}[tbh]
    \centering
    \begin{tabular}{c|cc}
    	\hline
         accuracy & low & high \\
         \hline
         \labelSPG\labelnm          & 0.615 & 1.044 \\
         \labelPANOC\labelnm        & 0.312 & 0.387 \\
         \labelRtwoN\labelnm        & 0.136 & 0.332 \\
         \labelRPQNlkm\labelnm      & 0.033 & 0.045 \\
         \labelRPQNlsrone\labelnm   & 0.034 & 0.054 \\
         \labelRPQNlbfgs\labelnm    & \best{0.026} & \best{0.044} \\
         \hline
    \end{tabular}
    \caption{Comparison on discretized obstacle problems, in terms of median runtimes [s]. Sample size: 180 problems for each accuracy level.}
    \label{tab:obstacle}
\end{table}

\section{Conclusion}\label{Sec:Final}
A proximal method with adaptive quadratic regularization and limited-memory quasi-Newton models was introduced in this work.
Global convergence was established under weak assumptions; in particular, no global Lipschitz continuity is required. 
Moreover, convergence of the iterates is shown under the Kurdyka--\L ojasiewicz property and a convergence rate was derived for the subsequence of successful iterates.

We also developed a compact representation for the limited-memory Kleinmichel update---a rank-one quasi-Newton formula guaranteeing positive definiteness under suitable assumptions---making it applicable within our subproblem solver. 
Numerical experiments indicate that the Kleinmichel update can be effective in practice, sometimes performing better than the well-known BFGS and SR1 updates.
More in general, enabled by the solution of proximal quasi-Newton subproblems through their compact representation, our numerical approach efficiently incorporates curvature information without costly inner iterations.
Comparisons against other recently developed solvers demonstrated the validity of the proposed numerical scheme, witnessing reliable and fast convergence in practice.

Our findings suggest several directions for future research.
First, our analysis does not establish eventual successfulness of the iterations prior to convergence of the iterates.
It would be interesting to investigate whether there are problem instances for which this property fails.
Second, further study of the Kleinmichel update appears promising: it is theoretically superior to SR1, and our experiments suggest that it can also yield improved performance in practice.

\section*{Declarations}

\paragraph{Data availability}
The test problems in \cref{Sec:Numerics} are based on randomly generated data.
The code used for data generation and numerical experiments is archived on Zenodo and openly available at \textsc{doi}:~\href{https://doi.org/10.5281/zenodo.20025657}{10.5281/zenodo.20025657}.

\paragraph{Conflict of interest}
The authors declare that they have no conflict of interest to this work.

\phantomsection
\addcontentsline{toc}{section}{References}%
{\small%
\bibliographystyle{habbrv}%
\bibliography{literature}%
}

\end{document}